\newcommand*{\id}{\textup{id}}
\numberwithin{equation}{section}
\theoremstyle{plain}
\newtheorem{thm}{Theorem}[section]
\newtheorem{prop}[thm]{Proposition}
 \newtheorem{cor}[thm]{Corollary}
\newtheorem{defi}[thm]{Definition}
\theoremstyle{remark}
\newtheorem{rem}[thm]{Remark}
\numberwithin{equation}{section}
\newcommand{\ot}{\otimes}
\newcommand{\beq}{\begin{equation}}
\newcommand{\eeq}{\end{equation}}
\newcommand{\A}{\mathcal{A}}
\newcommand{\cL}{\mathcal{L}}
\newcommand{\cR}{\mathcal{R}}
\newcommand{\one}[1]{{#1}{}_{\scriptscriptstyle{(1)}}}
\newcommand{\two}[1]{{#1}{}_{\scriptscriptstyle{(2)}}}
\newcommand{\p}{{}_{\scriptscriptstyle{\hat{+}}}}
\newcommand{\np}{{}_{\scriptscriptstyle{\hat{[+]}}}}
\newcommand{\m}{{}_{\scriptscriptstyle{\hat{-}}}}
\newcommand{\nm}{{}_{\scriptscriptstyle{\hat{[-]}}}}
\newcommand{\lbiprod}{{>\!\!\!\triangleleft\kern-.33em\cdot}}
\newcommand{\rbiprod}{{\cdot\kern-.33em\triangleright\!\!\!<}}
\newcommand{\z}{{}_{\scriptscriptstyle{(0)}}}
\newcommand{\rz}{{}_{\scriptscriptstyle{[0]}}}
\renewcommand{\o}{{}_{\scriptscriptstyle{(1)}}}
\newcommand{\ro}{{}_{\scriptscriptstyle{[1]}}}
\newcommand{\mo}{{}_{\scriptscriptstyle{(-1)}}}
\newcommand{\rmo}{{}_{\scriptscriptstyle{[-1]}}}
\renewcommand{\t}{{}_{\scriptscriptstyle{(2)}}}
\newcommand{\rt}{{}_{\scriptscriptstyle{[2]}}}
\newcommand{\mt}{{}_{\scriptscriptstyle{(-2)}}}
\renewcommand{\th}{{}_{\scriptscriptstyle{(3)}}}
\newcommand{\rth}{{}_{\scriptscriptstyle{[3]}}}
\newcommand{\la}{{\triangleright}}
\newcommand{\ra}{{\triangleleft}}
\newcommand{\bla}{{\blacktriangleright}}
\newcommand{\bra}{{\blacktriangleleft}}
\DeclareMathOperator{\tens}{\otimes}
\newcommand{\cH}{\mathcal{H}}
\newcommand{\CM}{\mathcal{M}}
\begin{document}

\author{Xiao Han}
\address{Queen Mary University of London\\
		School of Mathematical Sciences, Mile End Rd, London E1 4NS, UK}

\email{x.h.han@qmul.ac.uk}

\keywords{Hopf algebroid, bialgebroid, quantum group, Hopf bimodule, Yetter-Drinfeld module}

\title{Hopf bimodules and Yetter-Drinfeld modules of Hopf algebroids}
%

\begin{abstract}
 We construct Hopf bimodules and Yetter-Drinfeld modules of Hopf algebroids as a generalization of the theory for Hopf algebras. More precisely, we show that the categories of Hopf bimodules and Yetter-Drinfeld modules over a Hopf algebroid are equivalent (pre-)braided monoidal categories. Moreover, we also study the duality between finitely generated projective Yetter-Drinfeld modules.
 \end{abstract}

\maketitle

\section{Introduction}

The concept of Hopf bimodules (bicovariant bimodules) of  Hopf algebras is introduced by Woronowicz in \cite{SLW}, which aims to construct a quantum Lie algebra with a braided Lie bracket. The 4 variations of
 Yetter-Drinfeld modules of a bialgebra are studied by Radford and Towber in \cite{RT}. Moreover, Schauenburg showed in \cite{schau2} that the categories of Yetter-Drinfeld modules and Hopf bimodules of a Hopf algebra with a bijective antipode are equivalent braided monoidal categories.  In recent years, the theory of groupoids and Lie algebroids has been very well established and has a lot of applications ranging from pure math to theoretical physics. Moreover, as a quantization of groupoids, quantum groupoids are studied initially by J-H. Lu \cite{Lu}, P. Xu \cite{Xu}, and others, which are also very important because they provide powerful mathematical tools for quantum algebra.   In this article, we are going to generalize the theory of Hopf bimodules and Yetter-Drinfeld modules to  Hopf algebroids with bijective antipodes\cite{BS}.

The concept of bialgebroids is quite clear, but to introduce the `Hopf' structure, one has to deal with something to play the role of an  `antipode'. It is first defined by Schauenburg in \cite{schau1}, that a left Hopf algebroid is a left bialgebroid which is a left Galois object of itself. Moreover, the category of left-left Yetter-Drinfeld modules of a left bialgebroid $\cL$ is equivalent to the category of weak centers of the left modules of $\cL$. There is also a well-known Hopf algebroid introduced by B\"ohm and  Szlach\'anyi in \cite{BS}, which required to have a bijective antialgebra map for the antipode. This is much stronger than a left Hopf algebroid. In particular, a Hopf algebroid is not only a left Hopf algebroid but also an anti-left Hopf algebroid, (anti-)right Hopf algebroid. In this paper, the first main result is that these properties allow us to construct braiding maps that make the categories of Hopf bimodules and Yetter-Drinfeld modules of Hopf algebroids (pre-)braided monoidal categories. Moreover, we also show that these two braided monoidal categories are equivalent. The second main result is to show the right dual of a right-right Yetter-Drinfeld module of a left finite Hopf algebroid $\cR$ is a left-right Yetter-Drinfeld module of the right dual Hopf algebroid $\cR^{\vee}$. Moreover, the right dual functor is shown to be a braided monoidal functor which makes these two categories equivalent. For examples of Hopf algebroids, we note the special quantum gauge groupoids associated with the monopole and instanton bundles (cf. \cite{HG}, \cite{HGY}).

We also note that due to the important role of Hopf bimodules in the construction of differential structures on  Hopf algebras \cite{SLW}, one can expect that our results here should also be important for the construction of differential structures on Hopf algebroids and associated quantum Lie algebroids. This application will be looked at elsewhere and should in particular allow the construction of a quantum Atiyah Lie-algebroid associated to a quantum gauge groupoid $\cL(P, H)$ (Ehresmann--Schauenburg Hopf algebroid) introduced in \cite{schau2.5}. The latter is known to be a left Hopf algebroid in \cite{HM} and also to admit a weaker antipode under some conditions.

The paper consists of 4 sections. Section \ref{sec2} is the algebraic preliminaries that gives the basic definitions that we need. In section \ref{sec3} and \ref{sec4}, we introduce Hopf bimodules of Hopf algebroids and show that the monoidal categories of Hopf bimodules and Yetter-Drinfeld modules are equivalent as braided monoidal categories. In section \ref{sec5}, we first introduce the dual pairing between right-right Yetter-Drinfeld modules and left-right Yetter-Drinfeld modules. We then show that the right dual functor is a braided monoidal functor. In the appendix \ref{sec6}, we collect together some key identities for working with Hopf algebroids which are used in the main text. Some of these are standard, while the last group, for (anti-)right Hopf algebroids, is new.

It is also useful to compare this work with others. In conclusion, this work mainly focuses on Yetter-Drinfeld modules and Hopf bimodules of Hopf algebroids with bijective antipode (in the sense of \cite{BS}). Therefore, sections 3 and 4 are the `-oid' generalization of \cite{schau2}. There are many other works with the related topics. In \cite{BK},  B\'alint,  Szlach\'anyi have shown that the category of two-sided Yetter-Drinfeld modules is a braided monoidal category and equivalent to the centers of the modules of a Frobenius Hopf algebroid. Therefore, it could be viewed as a special case of the work in [Proposition 4.4 \cite{schau1}], which has shown that the category of Yetter-Drinfeld modules of a bialgebroid is equivalent to the weak centers of the modules of the bialgebroid. Another related work is given by Kowalzig\cite{NK}, where the left-right Yetter-Drinfeld modules and their relation to the centers of the corresponding bimodule category are studied. Moreover, this work also plays an important role in the cyclic (co)homology theory. In \cite{NK}, however, the corresponding bialgebroids are left and anti-left Hopf algebroids (therefore without antipode). Although this paper deals with Hopf algebroids at a precise level, there is another work\cite{RAA}, where R. Alcal\'a has dealt with Hopf algebroids and their modules in a more categorical way.

\subsection*{Acknowledgments} I thank Shahn Majid and Peter Schauenburg for helpful discussions. I was supported by the European Union's Horizon 2020 research and innovation program under the Marie Sk\l odowska-Curie grant agreement No 101027463.

\section{Basic algebraic preliminaries} \label{sec2}

\subsection{Left and right bialgebroids and its modules}

Here we recall the basic definitions (cf. \cite{BW}, \cite{Boehm}). Let $B$ be a unital algebra over a field $k$.
A {\em $B$-ring}  means a unital algebra in the monoidal category ${}_B\CM_B$ of $B$-bimodules. Likewise,  a  {\em $B$-coring} is a coalgebra in ${}_B\CM_B$. Morphisms of $B$-(co)rings are maps that preserve the (co)algebra structures and in the category ${}_B\CM_B$.

In practice, specifying a unital $B$-ring $\cL$ is equivalent to specifying a unital algebra $\cL$ (over $k$) and an algebra map $\eta:B\to \cL$. Left and right multiplication in $\cL$ pull back to left and right $B$-actions as a bimodule (so $b.X.c=\eta(b)X\eta(c)$ for all $b,c\in B$ and $X\in \cL$) and the product descends to the product $\mu_B:\cL\tens_B\cL\to \cL$ with $\eta$ the unit map. Conversely, given
$\mu_B$ we can pull back to an associative product on $\cL$ with unit $\eta(1)$.

Now suppose that $s:B\to \cL$ and $t:B^{op}\to \cL$ are algebra maps with images that commute. Then $\eta(b\tens c)=s(b)t(c)$ is an algebra map $\eta: B^e\to \cL$, where $B^e=B\tens B^{op}$, and is equivalent to making $\cL$ a $B^e$-ring. The left $B^e$-action part of this is equivalent to a $B$-bimodule structure
\begin{equation}\label{eq:rbgd.bimod}
b.X.c= s(b) t(c)X
\end{equation}
for all $b,c\in B$ and $X\in \cL$. Similarly, the right $B^{e}$-action part of this is equivalent to another $B$-bimodule structure
\begin{equation}\label{eq:rbgd.bimod1}
c^{.}X^{.}b= Xs(b) t(c).
\end{equation}

\begin{defi}\label{def:left.bgd} Let $B$ be a unital algebra. A left bialgebroid over $B$ (or left $B$-bialgebroid) is an algebra $\cL$ with commute algebra maps (`source' and `target' maps) $s_{L}:B\to \cL$ and $t_{L}:B^{op}\to \cL$ (and hence a $B^e$-ring) and a $B$-coring for the bimodule structure (\ref{eq:rbgd.bimod}) which is compatible in the sense
\begin{itemize}
\item[(i)] The coproduct $\Delta_{L}$ corestricts to an algebra map  $\cL\to \cL\times_{B} \cL$ where
\begin{equation*} \cL\times_{B} \cL :=\{\ \sum_i X_i \ot_{B} Y_i\ |\ \sum_i X_it_{L}(a) \ot_{B} Y_i=
\sum_i X_i \ot_{B}  Y_i  s_{L}(a),\quad \forall a\in B\ \}\subseteq \cL\tens_{B}\cL,
\end{equation*}
 is an algebra via factorwise multiplication.
\item[(ii)] The counit $\varepsilon$ is a left character in the following sense:
\begin{equation*}\varepsilon(1_{\cL})=1_{B},\quad \varepsilon( s_{L}(a)X)=a\varepsilon(X), \quad \varepsilon(Xs_{L}(\varepsilon(Y)))=\varepsilon(XY)=\varepsilon(Xt_{L} (\varepsilon(Y)))\end{equation*}
for all $X,Y\in \cL$ and $a\in B$.
\end{itemize}
\end{defi}

Morphisms between left $B$-bialgebroids are $B$-coring maps which are also algebra maps. In the following, we will use the sumless Sweedler indexes to denote the coproducts of left bialgebroids, namely,
 $\Delta_{L}(X)=X\o\ot_{B}X\t$. Whenever we mention a left bialgebroid (in case we haven't mentioned its base algebra), by default, it is a left bialgebroid over $B$.

By \cite{schau1}, we have the definition of left comodules of left algebroids.

\begin{defi}
    Let $\cL$ be a left bialgebroid over $B$, a left $\cL$-comodule is a $B$-bimodule $\Gamma$ (with $B$-bimodule structure denoted by $b.\rho.b'$, for all $b, b'\in B$), together with a $B$-bimodule map ${}_{L}\delta: \Gamma\to \cL\times_{B}\Gamma\subseteq\cL\ot_{B}\Gamma$, written ${}_{L}\delta(\rho)=\rho\mo\ot_{B}\rho\z$, such that
    \[(\Delta_{L}\ot_{B}\id_{\Gamma})\circ {}_{L}\delta=(\id_{\cL}\ot_{B}{}_{L}\delta)\circ {}_{L}\delta,\qquad (\varepsilon_{L}\ot_{B}\id_{\Gamma})\circ {}_{L}\delta=\id_{\Gamma},\]

where
\begin{equation*} \cL\times_{B} \Gamma :=\{\ \sum_i X_i \ot_{B} \rho_i\in \cL\tens_B\Gamma\ |\ \sum_i X_it_{L}(b) \ot_{B} \rho_i=
\sum_i X_i \ot_{B}  \rho_i  .b,\quad \forall b\in B\ \}.
\end{equation*}
\end{defi}
Here ${}_{L}\delta$ is a $B$-bimodule map in the sense that
${}_{L}\delta(b.\rho.b')=s_{L}(b)\rho\mo s_{L}(b')\ot_{B}\rho\z$.

\begin{rem}
Let $\cL$ be a left $B$-bialgebroid, thus it is a $B$-coring. It is given in [Section 1.4 of \cite{HHP}] that the comodule of $\cL$ as a $B$-coring coincides with the notion of comodule of $\cL$ as a left bialgebroid given above. More precisely, by the definition in \cite{HHP}, the image of the coaction of a comodule $\Gamma$ of a bialgebroid $\cL$ (as a $B$-coring) automatically lies in the Takeuchi product $\cL\times_{B}\Gamma$. In the following context, we will always use the definition above for more practical computation.
\end{rem}

\begin{defi}\label{def. left YD}
    Let $\cL$ be a left bialgebroid over $B$, a left-left Yetter-Drinfeld module of $\cL$ is a left $\cL$-comodule  and a left $\cL$-module $\Lambda$, such that
    \begin{itemize}
        \item $s_{L}(b)\bla \rho=b.\rho$ and $t_{L}(b)\bla\rho=\rho.b$, $\forall b\in B, \rho\in \Lambda$.
        \item $(X\o \bla\rho)\mo X\t\ot_{B}(X\o \bla\rho)\z=X\o\rho\mo\ot_{B}X\t\bla \rho\z$, $\forall X\in \cL, \rho\in \Lambda$.
    \end{itemize}
    We denoted the category of left-left Yetter-Drinfeld modules of $\cL$ by ${}_{\cL}^{\cL}\mathcal{YD}$.
\end{defi}

\begin{defi}\label{def. left covariant}
    Let $\cL$ be a left bialgebroid  over $B$, a left covariant bimodule of $\cL$ is a left $\cL$-comodule and a $\cL$-bimodule, such that
    \begin{itemize}
        \item [(1)] $s_{L}(b)\la\rho\ra s_{L}(b')=b.\rho.b'$,  $\forall b, b'\in B, \rho\in \Gamma$.
        \item [(2)] $(X\la \rho\ra Y)\mo\ot_{B}(X\la \rho\ra Y)\z=X\o\rho\mo Y\o\ot_{B}X\t\la\rho\z\ra Y\t$, $\forall X, Y\in \cL, \rho\in \Gamma$.
    \end{itemize}
\end{defi}
It is not hard to check (2) is well defined by the conditions in (1).

\begin{defi}
    Let $\cL$ be a left bialgebroid over $B$, a right $\cL$-comodule is a $B$-bimodule $\Gamma$ (with $B$-bimodule structure denoted by $b^{*}\rho{}^{*}b'$, for all $b, b'\in B$), together with a $B$-bimodule map $\delta_{L}: \Gamma\to \Gamma\times_{B}\cL\subseteq\Gamma\ot_{B}\cL$, written $\delta_{L}(\rho)=\rho\z\ot_{B}\rho\o$, such that
    \[(\delta_{L}\ot_{B}\id_{\cL})\circ \delta_{L}=(\id_{\Gamma}\ot_{B}\Delta_{L})\circ \delta_{L},\qquad (\id_{\Gamma}\ot_{B}\varepsilon_{L})\circ \delta_{L}=\id_{\Gamma},\]

where
\begin{equation*} \Gamma\times_{B}\cL :=\{\ \sum_i \rho_i \ot_{B} X_i\in \Gamma\otimes_{B}\cL\ |\ \sum_i {b}^{*}\rho_i  \ot_{B} X_i=
\rho_i  \ot_{B} X_i s_{L}(b),\quad \forall b\in B\ \}.
\end{equation*}
\end{defi}
Here $\delta_{L}$ is a $B$-bimodule map in the sense that
$\delta_{L}(b^{*}\rho^{*}b')=\rho\z \ot_{B} t_{L}(b')\rho\o t_{L}(b)$.

\begin{defi}\label{def. right-left YD}
    Let $\cL$ be a left bialgebroid over $B$, a right-left Yetter-Drinfeld module of $\cL$ is a right $\cL$-comodule  and a left $\cL$-module $\Lambda$, such that
    \begin{itemize}
        \item $s_{L}(b)\bla \rho=b^{*}\rho$ and $t_{L}(b)\bla \rho=\rho^{*}b$, $\forall b\in B, \rho\in \Lambda$.
        \item $(X\t\bla \rho)\z\ot_{B}(X\t\bla \rho)\o X\o=X\o\bla \rho\z\ot_{B}X\t \rho\o$.
    \end{itemize}
    The category of right-left Yetter-Drinfeld modules of $\cL$ is denoted by ${}_{\cL}\mathcal{YD}^{\cL}$.
\end{defi}
A right covariant bimodule of $\cL$ can be similarly defined, which is a right $\cL$ comodule and $\cL$-bimodule with compatible relation.

Similarly, there are also right bialgebroids and their comodules:
\begin{defi}\label{def:right.bgd} Let $A$ be a unital algebra. A right bialgebroid over $A$ (or right $A$-bialgebroid) is an algebra $\cR$ with commute algebra maps (`source' and `target' maps) $s_{R}:A\to \cR$ and $t_{R}:A^{op}\to \cR$ (and hence a $A^e$-ring) and a $A$-coring for the bimodule structure (\ref{eq:rbgd.bimod1}) which is compatible in the sense
\begin{itemize}
\item[(i)] The coproduct $\Delta_{R}$ corestricts to an algebra map  $\cR\to \cR\times_{A} \cR$ where
\begin{equation*} \cR\times_{A} \cR :=\{\ \sum_i X_i \ot_{A} Y_i\ |\ \sum_i s_{R}(a)X_i \ot_{A} Y_i=
\sum_i X_i \ot_{A}  t_{R}(a)Y_i  ,\quad \forall a\in A\ \}\subseteq \cR\tens_{A}\cR,
\end{equation*}
 is an algebra via factorwise multiplication.
\item[(ii)] The counit $\varepsilon$ is a right character in the following sense:
\begin{equation*}\varepsilon(1_{\cR})=1_{A},\quad \varepsilon(s_{R}(\varepsilon(X))Y)=\varepsilon(XY)=\varepsilon(t_{R}(\varepsilon(X))Y)\end{equation*}
for all $X,Y\in \cR$ and $a\in A$.
\end{itemize}
\end{defi}

Morphisms between right $A$-bialgebroids are $A$-coring maps which are also algebra maps. In the following, we will use the sumless Sweedler indexes to denote the coproducts of right bialgebroids, namely,
 $\Delta_{R}(X)=X\ro\ot_{A}X\rt$. Whenever we mention a right bialgebroid (in case we haven't mentioned its base algebra), by default, it is a right bialgebroid over $A$.

\begin{defi}
    Let $\cR$ be a right bialgebroid over $A$, a right $\cR$-comodule is a $A$-bimodule $\Gamma$ (with $A$-bimodule structure denoted by $a^{.}\rho{}^{.}a'$, for all $a, a'\in A$), together with a $A$-bimodule map $\delta_{R}: \Gamma\to \Gamma\times_{A}\cR\subseteq\Gamma\ot_{A}\cR$, written $\delta_{R}(\rho)=\rho\rz\ot_{A}\rho\ro$, such that
    \[(\delta_{R}\ot_{A}\id_{\cR})\circ \delta_{R}=(\id_{\Gamma}\ot_{A}\Delta_{R})\circ \delta_{R},\qquad (\id_{\Gamma}\ot_{A}\varepsilon_{R})\circ \delta_{R}=\id_{\Gamma},\]
where
\begin{equation*} \Gamma\times_{A}\cR :=\{\ \sum_i \rho_i \ot_{A} X_i\in \Gamma\otimes_{A}\cR\ |\ \sum_i a^{.}\rho_i  \ot_{A} X_i=
\rho_i  \ot_{A} t_{R}(a)X_i,\quad \forall a\in A\ \}.
\end{equation*}
\end{defi}

Here $\delta_{R}$ is a $A$-bimodule map in the sense that
$\delta_{R}(a^{.}\rho^{.}a')=\rho\rz \ot_{A} s_{R}(a)\rho\ro s_{R}(a')$.

\begin{defi}\label{def. right-right YD}
    Let $\cR$ be a right bialgebroid over $A$, a right-right Yetter-Drinfeld module of $\cR$ is a right $\cR$-comodule  and a right $\cR$-module $\Lambda$, such that
    \begin{itemize}
        \item $\rho\bra s_{R}(a)=\rho^{.}a$ and $\rho\bra t_{R}(a)=a^{.}\rho$, $\forall a\in A, \rho\in \Lambda$.
        \item $(\rho\bra X\rt)\rz\ot_{A}X\ro(\rho\bra X\rt)\ro=\rho\rz \bra X\ro \ot_{A}\rho\ro X\rt$, $\forall X\in \cR, \rho\in \Lambda$.
    \end{itemize}
    We denoted the category of right-right Yetter-Drinfeld modules of $\cR$ by $\mathcal{YD}_{\cR}^{\cR}$.
\end{defi}

\begin{defi}\label{def. right covariant}
    Let $\cR$ be a right bialgebroid over $A$, a right covariant bimodule of $\cR$ is a right $\cR$-comodule and a $\cR$-bimodule, such that
    \begin{itemize}
        \item [(1)] $s_{R}(a)\la\rho\ra s_{R}(a')=a^{.}\rho^{.}a'$,  $\forall a, a'\in A, \rho\in \Gamma$.
        \item [(2)] $(X\la \rho\ra Y)\rz\ot_{A}(X\la \rho\ra Y)\ro=X\ro\la\rho\rz\ra Y\ro\ot_{A}X\rt\rho\ro Y\rt$, $\forall X, Y\in \cR, \rho\in \Gamma$.
    \end{itemize}
\end{defi}

\begin{defi}
    Let $\cR$ be a right bialgebroid over $A$, a left $\cR$-comodule is a $A$-bimodule $\Gamma$ (with $A$-bimodule structure denoted by $a_{*}\rho_{*}a'$, for all $a, a'\in A$), together with a $A$-bimodule map ${}_{R}\delta: \Gamma\to \cR\times_{A}\Gamma\subseteq\cR\ot_{A}\Gamma$, written ${}_{R}\delta(\rho)=\rho\rmo\ot_{A}\rho\rz$, such that
    \[(\Delta_{R}\ot_{A}\id_{\Gamma})\circ {}_{R}\delta=(\id_{\cR}\ot_{A}{}_{R}\delta)\circ {}_{R}\delta,\qquad (\varepsilon_{R}\ot_{A}\id_{\Gamma})\circ {}_{R}\delta=\id_{\Gamma},\]
where
\begin{equation*} \cR\times_{A} \Gamma :=\{\ \sum_i X_i \ot_{A} \rho_i\in \cR\tens_A\Gamma\ |\ \sum_i s_{R}(a)X_i \ot_{A} \rho_i=
\sum_i X_i \ot_{A}  \rho_i{}_{*}a  ,\quad \forall a\in A\ \}.
\end{equation*}
\end{defi}

Here ${}_{R}\delta$ is a $A$-bimodule map in the sense that
${}_{R}\delta(a_{*}\rho_{*}a')=t_{R}(a')\rho\rmo t_{R}(a)\ot_{A}\rho\rz$.

\begin{defi}\label{def. left-right YD}
    Let $\cR$ be a right bialgebroid over $A$, a left-right Yetter-Drinfeld module of $\cR$ is a left $\cR$-comodule  and a right $\cR$-module $\Lambda$, such that
    \begin{itemize}
        \item $\rho\bra s_{R}(a)=\rho_{*}a$ and $\rho\bra t_{R}(a)=a_{*}\rho$, $\forall a\in A, \rho\in \Lambda$.
        \item $X\rt(\rho\bra X\ro)\rmo\ot_{A}(\rho\bra X\ro)\rz=\rho\rmo X\ro\ot_{A}\rho\rz\bra X\rt$.
    \end{itemize}
    The category of left-right Yetter-Drinfeld modules of $\cR$ is denoted by ${}^{\cR}\mathcal{YD}_{\cR}$.
\end{defi}

A left covariant bimodule of $\cR$ can be similarly defined, which is a left $\cR$-comodule and $\cR$-bimodule with compatible relation.

\subsection{Hopf algebroids}

\begin{defi}\label{defHopf}
A left $B$-bialgebroid $\cL$ is a left Hopf algebroid (\cite{schau1}, Thm and Def 3.5.) if
\[\lambda: \cL\ot_{B^{op}}\cL\to \cL\ot_{B}\cL,\quad
    \lambda(X\ot_{B^{op}} Y)=\one{X}\ot_{B}\two{X}Y\]
is invertible, where $\tens_{B^{op}}$ is induced by $t_{L}$ (so $Xt_{L}(a)\ot_{B^{op}}Y=X\ot_{B^{op}}t_{L}(a)Y$, for all $X, Y\in \cL$ and $b\in B$) while the $\tens_{B}$ is the standard one using (\ref{eq:rbgd.bimod}) (so $t_{L}(a)X\ot_{B}Y=X\ot_{B}s_{L}(a)Y$).\\
Similarly, a left $B$-bialgebroid $\cL$ is an anti-left Hopf algebroid if
\[\mu: \cL\ot^{B^{op}}\cL\to \cL\ot_{B}\cL,\quad
    \mu(X\ot^{B^{op}} Y)=\one{Y}X\ot_{B}\two{Y}\]
is invertible, where $\tens^{B^{op}}$ is induced by $s_{L}$ (so $s_{L}(a)X\ot^{B^{op}}Y=X\ot^{B^{op}}Ys_{L}(a)$, for all $X, Y\in \cL$ and $a\in B$) while the $\tens_{B}$ is the standard one using (\ref{eq:rbgd.bimod}).
\end{defi}

In the following we will always use the balanced tensor product explained as above. If $B=k$ then $\lambda$ reduces to the map $\cL\tens\cL\to \cL\tens\cL$ given by $h\tens g\mapsto h\o\tens h\t g$ which for a usual Hopf algebra has inverse $h\tens g\mapsto h\o\tens S(h\t)g$. We refer to texts such as
\cite{KS,Ma:book,Swe} for more details. We adopt the shorthand
\begin{equation}\label{X+-} X_{+}\ot_{B^{op}}X_{-}:=\lambda^{-1}(X\ot_{B}1),\end{equation}
\begin{equation}\label{X[+][-]} X_{[-]}\ot^{B^{op}}X_{[+]}:=\mu^{-1}(1\ot_{B}X).\end{equation}

Similarly,

\begin{defi}\label{defHopf1}
A right $A$-bialgebroid  $\cR$  is a right Hopf algebroid if
\[\hat{\lambda}: \cR\ot_{A^{op}}\cR\to \cR\ot_{A}\cR,\quad
    \hat{\lambda}(X\ot_{A^{op}} Y)=XY\ro\ot_{A}Y\rt\]
is invertible, where $\tens_{A^{op}}$ is induced by $t_{R}$ (so $Xt_{R}(a)\ot_{A^{op}}Y=X\ot_{A^{op}}t_{R}(a)Y$, for all $X, Y\in \cR$ and $a\in A$) while the $\tens_{A}$ is the standard one (\ref{eq:rbgd.bimod1}) (so $Xs_{R}(a)\ot_{A}Y=X\ot_{A}Yt_{R}(a)$).\\
Similarly, a right $A$-bialgebroid $\cR$ is an anti-right Hopf algebroid if
\[\hat{\mu}: \cR\ot^{A^{op}}\cR\to \cR\ot_{A}\cR,\quad
    \hat{\mu}(X\ot^{A^{op}} Y)=X\ro\ot_{A}YX\rt\]
is invertible, where $\tens^{A^{op}}$ is induced by $s_{R}$ (so $s_{R}(a)X\ot^{A^{op}}Y=X\ot^{A^{op}}Ys_{R}(a)$, for all $X, Y\in \cR$ and $a\in A$) while the $\tens_{A}$ is the standard one (\ref{eq:rbgd.bimod1}).
\end{defi}

 We adopt the shorthand
\begin{equation}\label{X+-1} X\m\ot_{A^{op}} X\p :=\hat{\lambda}^{-1}(1\ot_{A}X),\end{equation}
\begin{equation}\label{X[+][-]1} X\np\ot^{A^{op}}X\nm:=\hat{\mu}^{-1}(X\ot_{A}1).\end{equation}

In the following, we are going to introduce a more symmetric Hopf algebroid, namely, a bialgebroid with bijective antipode that is given in \cite{BS}

\begin{defi}\label{def. full Hopf algebroid1}
    A left $B$-bialgebroid $\cL$ is a Hopf algebroid, if there is an invertible anti-algebra map $S:\cL\to \cL$, such that
    \begin{itemize}
        \item $S\circ t_{L}=s_{L}$,
        \item $\one {(S^{-1}\two{X})} \ot_B \two {(S^{-1}\two{X})}\one{X}  = S^{-1}(X) \ot_B 1_\cL$
        \item $\one {S(\one{X})} \two{X} \ot_{B} \two{S(\one{X})}  = 1_\cL \ot_{B} S(X)$.
    \end{itemize}
\end{defi}
We can also define a Hopf algebroid in terms of a right bialgebroid.
\begin{defi}\label{def. full Hopf algebroid3}
    A right $A$-bialgebroid $\cR$  is a Hopf algebroid, if there is an invertible anti-algebra map $S:\cR\to \cR$, such that
    \begin{itemize}
        \item $S\circ t_{R}=s_{R}$,
        \item $X\rt S^{-1}(X\ro)\ro\ot_{A}S^{-1}(X\ro)\rt=1\ot_{A}S^{-1}(X)$,
        \item $S(X\rt)\ro\ot_{A}X\ro S(X\rt)\rt=S(X)\ot_{A}1$.
    \end{itemize}
\end{defi}

There is another equivalent definition given by \cite{BS}

\begin{defi}\label{def. full Hopf algebroid2}
    A Hopf algebroid consists of a left bialgebroid $(\cL, \varepsilon_{L}, \Delta_{L}, s_{L}, t_{L}, m)$ over $B$ and a right bialgebroid $(\cR, \varepsilon_{R}, \Delta_{R}, s_{R}, t_{R}, m)$ over $A$, such that
    \begin{itemize}
        \item $A$ is isomorphic to $B^{op}$,  $\cL$ and $\cR$ have the same underlying algebra structure $H$;
        \item $s_{L}(B)=t_{R}(A)$ and $t_{L}(B)=s_{R}(A)$ as subrings of $H$;
        \item $(\id\ot_{B}\Delta_{R})\circ\Delta_{L}=(\Delta_{L}\ot_{A}\id)\circ\Delta_{R}$ and $(\id\ot_{A}\Delta_{L})\circ\Delta_{R}=(\Delta_{R}\ot_{B}\id)\circ\Delta_{L}$;
        \item $\cL$ is a left Hopf and anti-left Hopf algebroid. (Or equivalently, $\cR$ is a right Hopf and anti-right Hopf algebroid.)
    \end{itemize}

\end{defi}

Given a Hopf algebroid  $(\cL, \varepsilon_{L}, \Delta_{L}, s_{L}, t_{L}, m)$ with $S$ as in definition \ref{def. full Hopf algebroid1}, we can construct a right Hopf algebroid $(\cR, \varepsilon_{R}, \Delta_{R}, s_{R}, t_{R}, m)$ over $A$ with $A:=B^{op}$ and $\cR:=\cL$ as algebra; the source and target maps are given by

\begin{align}\label{equ. left source target maps to right source and target maps}
  s_{R}(a):=t_{L}(a), \quad t_{R}(a):=S^{-1}\circ t_{L}(a);
\end{align}
the coproduct is given by
\begin{align}\label{equ. left coproduct to right coproduct}
    \Delta_{R}(X):=S(S^{-1}(X)\t)\ot_{A}S(S^{-1}(X)\o)=S^{-1}(S(X)\t)\ot_{A}S^{-1}(S(X)\o);
\end{align}
the counit is given by
\begin{align}\label{equ. left counit to right counit}
\varepsilon_{R}:=\varepsilon_{L}\circ S.
\end{align}
We can also see $\cL$ is a left Hopf algebroid and an anti-left Hopf algebroid, while $\cR$ is a right Hopf algebroid and anti-right Hopf algebroid, with
\begin{align}   X_{+}\ot_{B^{op}}X_{-}=X\ro\ot_{B^{op}}S(X\rt),\quad X_{[-]}\ot^{B^{op}}X_{[+]}=S^{-1}(X\ro)\ot^{B^{op}}X\rt.
\end{align}
And
\begin{align}
    X\m\ot_{A^{op}}X\p=S(X\o)\ot_{A^{op}}X\t,\quad X\np\ot^{A^{op}}X\nm=X\o\ot^{A^{op}}S^{-1}(X\t).
\end{align}
We also have
\begin{align}
    S^{\pm}(X)\ro\ot_{A}S^{\pm}(X)\rt&=S^{\pm}(X\t)\ot_{A}S^{\pm}(X\o),\\
    S^{\pm}(X)\o\ot_{B}S^{\pm}(X)\t&=S^{\pm}(X\rt)\ot_{B}S^{\pm}(X\ro).
\end{align}

 We will always use $\cH$ or $(\cL, \cR, S)$ to denote the Hopf algebroid with the left $B$-bialgebroid $\cL$ and right $A$-bialgebroid $\cR$ with $H$ being the underlying algebra of $\cL$ and $\cR$ as in Definition \ref{def. full Hopf algebroid2}, equipped with an invertible antipode $S$ in the sense of \cite{BS}. Moreover, in order to simplify the relation in $(\cL, \cR, S)$, with no loss of generality, we will always assume the base algebra of $\cL$ is the opposite base algebra of $\cR$, i.e. $A=B^{op}$, with their structure satisfies (\ref{equ. left source target maps to right source and target maps}) and (\ref{equ. left coproduct to right coproduct}).



Given a Hopf algebroid $(\cL, \cR, S)$, a comodule of $(\cL, \cR, S)$ is introduced in \cite{Boehm,BB}
\begin{defi}
    A right comodule of $(\cL, \cR, S)$ is a right $\cL$ and $\cR$ comodule $\Gamma$ (with their corresponding bimodule structure denoted by $b^{*}\rho^{*}b'$ and $a^{.}\rho^{.}a'$ for $a, a'\in A$, $b, b'\in B$), such that
    $\delta_{L}$ is right $A$-module map and $\delta_{R}$ is a right $B$-module map, namely,
         \[\delta_{L}(\rho^{.}a)=\rho\z\ot_{B}\rho\o s_{R}(a), \quad \delta_{R}(\rho^{*}b)=\rho\rz\ot_{A}t_{L}(b)\rho\ro ,\]
      for any $a\in A, b\in B$ and $\rho\in \Gamma$,     and
     \[(\id\ot_{B}\Delta_{R})\circ\delta_{L}=(\delta_{L}\ot_{A}\id)\circ\delta_{R},\quad (\id\ot_{A}\Delta_{L})\circ\delta_{R}=(\delta_{R}\ot_{B}\id)\circ\delta_{L}.\]
     As a comodule of $\cL$ and $\cR$, the $A$-bimodule structure on $\Gamma\ot_{B}\cL$ is given by $a^{.}(\rho\ot_{B} X)^{.}a'=\rho\ot_{B} s_{R}(a)Xs_{R}(a')$, and the $B$-bimodule structure on $\Gamma\ot_{A}\cR$ is given by $b^{*}(\rho\ot_{A} X)^{*}b'=\rho\ot_{A} t_{R}(b')Xt_{R}(b)$.
\end{defi}

Similarly,

\begin{defi}
    A left comodule of $(\cL, \cR, S)$ is a left $\cL$ and $\cR$ comodule $\Gamma$ (with their corresponding bimodule structure denoted by $b_{.}\rho_{.}b'$ and $a_{*}\rho_{*}a'$ for $a, a'\in A$, $b, b'\in B$), such that ${}_{L}\delta$ is a left $\cR$-comodule and left $A$-module map and ${}_{R}\delta$ is a left $\cL$-comodule and left $B$-module  map, where the $A$-bimodule structure on $\cL\ot_{B}\Gamma$ is given by $a_{*}(X\ot_{B} \rho)_{*}a'=t_{R}(a')Xt_{R}(a)\ot_{B}\rho$, and the $B$-bimodule structure on $\cR\ot_{A}\Gamma$ is given by $b_{.}(X\ot_{B} \rho)_{.}b'=s_{L}(b)Xs_{L}(b')\ot_{B}\rho$.

\end{defi}

\begin{prop}
    Let $\Gamma$ be a right comodule of $(\cL, \cR, S)$, then $a^{.}\rho^{.} a'=a^{*}\rho^{*}a'$ and $\Gamma^{co\cR}=\Gamma^{co\cL}$,
    where $\Gamma^{co\cL}:=\{\ \eta\in\Gamma \ |\ \delta_{L}(\eta)=\eta\ot_{B}1 \}$ and $\Gamma^{co\cR}:=\{\ \eta\in\Gamma \ |\ \delta_{R}(\eta)=\eta\ot_{A}1 \}$.
      Similarly, a left $(\cL, \cR, S)$-comodule also results in the same left invariant subspace associated with the left $\cL$ and $\cR$ comodule structure. And $a_{.}\rho_{.} a'=b'_{*}\rho_{*}b$ for $s_{L}(b)=t_{R}(a)$ and $s_{L}(b')=t_{R}(a')$.
\end{prop}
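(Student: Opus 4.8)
The plan is to reduce every assertion to two soft properties of comodules over bialgebroids: that each coaction is injective (since $(\id\ot\varepsilon)\circ\delta=\id$, the counit is a one-sided inverse of the coaction, so $\delta_{L},\delta_{R},{}_{L}\delta,{}_{R}\delta$ are all split monomorphisms), and that coinvariance is transported between $\cL$ and $\cR$ through the two cross-compatibilities of the coproducts. The only structural input from the Hopf algebroid will be the identities $s_{R}=t_{L}$ (for the right-comodule claims) and $s_{L}=t_{R}$ (for the left-comodule claims), together with $\Delta_{R}(1)=1\ot_{A}1$, $\Delta_{L}(1)=1\ot_{B}1$ and $\varepsilon_{L}(1)=1_{B}$, $\varepsilon_{R}(1)=1_{A}$; the antipode itself is not needed.

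First I would prove that the two bimodule structures on a right comodule $\Gamma$ coincide, by comparing coactions. From the $B$-bimodule map property $\delta_{L}(b^{*}\rho^{*}b')=\rho\z\ot_{B}t_{L}(b')\rho\o t_{L}(b)$ one reads off $\delta_{L}(a^{*}\rho)=\rho\z\ot_{B}\rho\o\,t_{L}(a)$, whereas the defining cross-relation gives $\delta_{L}(\rho^{.}a)=\rho\z\ot_{B}\rho\o\,s_{R}(a)$. As $s_{R}(a)=t_{L}(a)$ these right-hand sides agree, so injectivity of $\delta_{L}$ yields $\rho^{.}a=a^{*}\rho$. The symmetric computation with $\delta_{R}$, comparing $\delta_{R}(a^{.}\rho)=\rho\rz\ot_{A}s_{R}(a)\rho\ro$ with $\delta_{R}(\rho^{*}b)=\rho\rz\ot_{A}t_{L}(b)\rho\ro$ and using $t_{L}=s_{R}$, gives $a^{.}\rho=\rho^{*}a$. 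These two identities say precisely that the right $\cR$-action is the left $\cL$-action and the left $\cR$-action is the right $\cL$-action; under $A=B^{op}$ this is the asserted coincidence $a^{.}\rho^{.}a'=a^{*}\rho^{*}a'$ of the two induced bimodule structures.

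Next, for the coinvariants, I would feed a $\cL$-coinvariant vector into the first cross-compatibility. If $\delta_{L}(\eta)=\eta\ot_{B}1$, then applying $(\id\ot_{B}\Delta_{R})\circ\delta_{L}=(\delta_{L}\ot_{A}\id)\circ\delta_{R}$ to $\eta$ and using $\Delta_{R}(1)=1\ot_{A}1$ turns the left-hand side into $\eta\ot_{B}1\ot_{A}1$; contracting the middle $\cL$-leg by $\id\ot_{B}\varepsilon_{L}\ot_{A}\id$ and using $(\id\ot_{B}\varepsilon_{L})\circ\delta_{L}=\id$ on the right collapses the identity to $\eta\ot_{A}1=\delta_{R}(\eta)$, so $\eta\in\Gamma^{co\cR}$. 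The reverse inclusion is identical after exchanging the roles of $\cL$ and $\cR$, using $(\id\ot_{A}\Delta_{L})\circ\delta_{R}=(\delta_{R}\ot_{B}\id)\circ\delta_{L}$, $\Delta_{L}(1)=1\ot_{B}1$ and $\varepsilon_{R}(1)=1_{A}$. Hence $\Gamma^{co\cL}=\Gamma^{co\cR}$.

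The left-comodule assertions follow by the mirror-image arguments. The equality of the left-invariant subspaces is proved verbatim with ${}_{L}\delta,{}_{R}\delta$ in place of $\delta_{L},\delta_{R}$ and the left cross-compatibilities. For the bimodule identity it again suffices to equate a single coaction of each side: the $B$-bimodule formula ${}_{L}\delta(b.\rho.b')=s_{L}(b)\rho\mo s_{L}(b')\ot_{B}\rho\z$ and the cross-relation expressing the $\cR$-action through $t_{R}$ agree exactly when $s_{L}(b)=t_{R}(a)$ and $s_{L}(b')=t_{R}(a')$, after which injectivity of ${}_{L}\delta$ concludes. I expect the main obstacle to be entirely bookkeeping: keeping the four Takeuchi/base-module structures and the $A=B^{op}$ identification straight, and---most delicately---recognizing that the matching in the left case is governed by $s_{L}=t_{R}$ rather than by $s_{R}=t_{L}$, which is what accounts for the different placement of $a,a',b,b'$ in the two statements. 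Once the correct source/target identity is inserted, each equality is just injectivity of a coaction.
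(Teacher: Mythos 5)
Your proposal is correct and takes essentially the same route as the paper: both arguments identify the two bimodule structures by comparing the coactions on each side (using $s_{R}=t_{L}$ and the implicit injectivity of $\delta_{L}$, $\delta_{R}$ coming from the counit axioms), and both derive $\Gamma^{co\cL}=\Gamma^{co\cR}$ from the cross-compatibility of $\delta_{L}$ and $\delta_{R}$ contracted against a counit, with the left-comodule case handled by the mirror argument and the matching condition $s_{L}(b)=t_{R}(a)$. The only immaterial difference is orientation: you write out the inclusion $\Gamma^{co\cL}\subseteq\Gamma^{co\cR}$ using the compatibility $(\id\ot_{B}\Delta_{R})\circ\delta_{L}=(\delta_{L}\ot_{A}\id)\circ\delta_{R}$ and appeal to symmetry for the converse, whereas the paper writes out $\Gamma^{co\cR}\subseteq\Gamma^{co\cL}$ using the other compatibility.
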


\begin{proof}

As  $\delta_{R}(a^{.}\rho)=\delta_{R}(\rho^{*}a)$ and $\delta_{L}(\rho^{.}a)=\delta_{L}(a^{*}\rho)$, we have $a'^{.}\rho^{.} a=a^{*}\rho^{*}a'$.

    If $\rho\in \Gamma^{co\cR}$, then we have
    \begin{align*}
\rho\z\ot_{B}\rho\o=&\rho\z\rz{}^{.}(\varepsilon_{R}(\rho\z\ro))\ot_{B}\rho\o
=\rho\rz{}^{.}(\varepsilon_{R}(\rho\ro\o))\ot_{B}\rho\ro\t=\rho\ot_{B}1.
    \end{align*}
    So $\Gamma^{co\cR}\subseteq\Gamma^{co\cL}$. Similarly, $\Gamma^{co\cL}\subseteq\Gamma^{co\cR}$, so $\Gamma^{co\cL}=\Gamma^{co\cR}$. The result of a left comodule is slightly different as $s_{L}$ is in general not equal to $t_{R}$.
\end{proof}

\begin{defi}
    A full right-right Yetter-Drinfeld module of $(\cL, \cR, S)$ is a right-right Yetter-Drinfeld module of $\cR$ as well as a right comodule of $(\cL, \cR, S)$, such that the underlying right $\cR$-comodule structure of the right-right Yetter-Drinfeld module is the same as the underlying right $\cR$-comodule structure of right comodule of $(\cL, \cR, S)$. We denoted the category of full right-right Yetter-Drinfeld modules of $\cR$ by $\mathcal{YD}_{\cR}^{\cH}$. \\
    A full left-right Yetter-Drinfeld module of $(\cL, \cR, S)$ is a left-right Yetter-Drinfeld module of $\cR$ as well as a left comodule of $(\cL, \cR, S)$, such that the underlying left $\cR$-comodule structure of the left-right Yetter-Drinfeld module is the same as the underlying left $\cR$-comodule structure of the left comodule of $(\cL, \cR, S)$. We denoted the category of full left-right Yetter-Drinfeld modules of $\cR$ by ${}^{\cH}\mathcal{YD}_{\cR}$. \\A full left-left Yetter-Drinfeld module and full right-left Yetter-Drinfeld module of $\cL$ can be similarly defined, and they are denoted by ${}^{\cH}_{\cL}\mathcal{YD}$ and ${}_{\cL}\mathcal{YD}^{\cH}$ respectively.
\end{defi}

There is a useful Proposition for full Yetter-Drinfeld modules:

\begin{prop}
    Let $(\cL, \cR, S)$ be a Hopf algebroid and $\Lambda\in \mathcal{YD}^{\cR}_{\cR}$, then the Yetter-Drinfeld condition is equivalent to
    \[(\eta\bra X)\rz\ot_{A}(\eta\bra X)\ro=\eta\rz\bra X\t\ro\ot_{A}S(X\o)\eta\ro X\t\rt,\]
    for any $\eta\in \Lambda$.
   \begin{proof}
         We can see
        \begin{align*}
            \eta\rz\bra X\t\ro\ot_{A}S(X\o)\eta\ro X\t\rt=&(\eta\bra X\t\rt)\rz\ot_{A}S(X\o)X\t\ro(\eta\bra X\t\rt)\ro\\
            =&(\eta\bra X\rt)\rz\ot_{A}S(X\ro\o)X\ro\t(\eta\bra X\rt)\ro\\
            =&(\eta\bra X)\rz\ot_{A}(\eta\bra X)\ro.
        \end{align*}
    \end{proof}
\end{prop}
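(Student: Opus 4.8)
The plan is to prove the equivalence by transforming the right-hand side of the displayed identity into its left-hand side, the reverse implication following by running the same chain backwards. This is the Hopf-algebroid analogue of the classical equivalence, for a Hopf algebra $H$, between the ``intertwining'' form of the right-right Yetter--Drinfeld condition, $(v\bra h\t)\z\ot h\o(v\bra h\t)\o=v\z\bra h\o\ot v\o h\t$, and its ``the coaction is a module morphism'' form $(v\bra h)\z\ot(v\bra h)\o=v\z\bra h\t\ot S(h\o)v\o h\th$. Here the three legs $h\o,h\t,h\th$ are modelled by $X\o$, $X\t\ro$ and $X\t\rt$, so that the displayed identity mixes the left coproduct $\Delta_{L}$ and the right coproduct $\Delta_{R}$ of the Hopf algebroid, while Definition \ref{def. right-right YD} uses $\Delta_{R}$ only.

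First I would start from the right-hand side $\eta\rz\bra X\t\ro\ot_{A}S(X\o)\eta\ro X\t\rt$ and apply the Yetter--Drinfeld relation of Definition \ref{def. right-right YD} to the sub-expression $\eta\rz\bra (X\t)\ro\ot_A\eta\ro (X\t)\rt$, carrying $S(X\o)$ along as a coefficient on the left of the second tensor leg; this yields $(\eta\bra X\t\rt)\rz\ot_{A}S(X\o)X\t\ro(\eta\bra X\t\rt)\ro$. Next I would invoke the coproduct compatibility of Definition \ref{def. full Hopf algebroid2}, namely $X\o\ot_B X\t\ro\ot_A X\t\rt=X\ro\o\ot_B X\ro\t\ot_A X\rt$, to rewrite $S(X\o)X\t\ro$ and $X\t\rt$ as $S(X\ro\o)X\ro\t$ and $X\rt$. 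Finally I would collapse $S(X\ro\o)X\ro\t$ using the antipode axioms of Definition \ref{def. full Hopf algebroid1}, i.e. the algebroid version of $S(Y\o)Y\t=\varepsilon(Y)1$ (a source/target map applied to a counit), and then the right counit axiom of Definition \ref{def:right.bgd}, which collapses $X\ro\ot_A X\rt$ to $X$ and leaves exactly $(\eta\bra X)\rz\ot_A(\eta\bra X)\ro$.

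The second and third steps are identities valid in any right $\cR$-comodule that is also a right $\cR$-module, independent of the Yetter--Drinfeld compatibility; hence they establish $(\eta\bra X\t\rt)\rz\ot_{A}S(X\o)X\t\ro(\eta\bra X\t\rt)\ro=(\eta\bra X)\rz\ot_A(\eta\bra X)\ro$ unconditionally, so the displayed identity is logically equivalent to the single application of Definition \ref{def. right-right YD} in the first step. The converse is then obtained by reversing the chain: the coalgebra collapses are undone by coassociativity and the counit, and the auxiliary $X\o$ leg carrying $S(X\o)$ is stripped off using the invertibility of the antipode $S$, recovering the plain intertwining relation for all $X$. I expect the main obstacle to be twofold. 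The delicate bookkeeping is over the balanced tensor products: one must check that $S(X\o)$ genuinely crosses $\ot_A$ as a legitimate coefficient (using $S\circ t_L=s_L$ and $s_R=t_L$ together with the Takeuchi subproduct constraints), and that the antipode--counit collapse is simultaneously compatible with the action $\bra$ and the coaction, so that each intermediate expression is well defined on the relevant Takeuchi product. The genuinely nontrivial point, however, is the reverse direction, where invertibility of $S$ is essential in order to remove the $\Delta_{L}$-splitting and reduce the ``morphism'' form back to the ``intertwining'' form of Definition \ref{def. right-right YD}.
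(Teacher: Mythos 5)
Your proposal is correct and follows essentially the same route as the paper's own proof: the identical three-step chain of applying the Yetter--Drinfeld condition of Definition \ref{def. right-right YD} to $X\t$ while carrying $S(X\o)$ along, then the mixed coassociativity $X\o\ot_{B}X\t\ro\ot_{A}X\t\rt=X\ro\o\ot_{B}X\ro\t\ot_{A}X\rt$ from Definition \ref{def. full Hopf algebroid2}, then the collapse $S(X\ro\o)X\ro\t=s_{R}(\varepsilon_{R}(X\ro))$ together with the right counit axiom. The paper records only this forward chain, so your extra remarks on the converse (stripping the $S(X\o)$ leg, which indeed works by substituting $X\mapsto Y\rt$, premultiplying the second tensor leg by $Y\ro$ and using the right Hopf algebroid identity $Y\ro S(Y\rt\o)\ot_{A^{op}}Y\rt\t=1\ot_{A^{op}}Y$, a consequence of the invertibility of $S$) merely make explicit what the paper leaves implicit.
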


\section{Hopf bimodules of Hopf algebroids}\label{sec3}

\begin{defi}\label{def. bicovariant}
    Let $(\cL, \cR, S)$ be a Hopf algebroid with $H$ being the underlying algebra of $\cL$ (or $\cR$), a Hopf bimodule of $(\cL, \cR, S)$ is a left $\cL$-covariant comodule, a right $\cR$-covariant comodule $\Gamma$ and a $H$-bimodule, such that
    \begin{itemize}
        \item  $(\id\ot_{B}\delta_{R})\circ{}_{L}\delta=({}_{L}\delta\ot_{A}\id)\circ\delta_{R}$.
    \end{itemize}
\end{defi}

It is not hard to see the relation above is well-defined by the second conditions of Definitions \ref{def. left covariant} and \ref{def. right covariant}. We denote the category of Hopf bimodules of $(\cL, \cR, S)$ by ${}_{H}^{\cL}\mathcal{M}_{H}^{\cR}$. Similarly,

\begin{defi}\label{def. bicovariant1}
    Let $(\cL, \cR, S)$ be a Hopf algebroid with $H$ being the underlying algebra of $\cL$ (or $\cR$), an anti-Hopf bimodule of $(\cL, \cR, S)$ is a left $\cR$-covariant comodule, a right $\cL$-covariant comodule $\Gamma$ and a $H$-bimodule, such that
    \begin{itemize}
        \item  $(\id\ot_{A}\delta_{L})\circ{}_{R}\delta=({}_{R}\delta\ot_{B}\id)\circ\delta_{L}$.
    \end{itemize}
    We denote the category of anti-Hopf bimodules of $(\cL, \cR, S)$ by ${}_{H}^{\cR}\mathcal{M}_{H}^{\cL}$.
\end{defi}

\begin{defi}
    Let $(\cL, \cR, S)$ be a Hopf algebroid with $H$ being the underlying algebra of $\cL$ (or $\cR$), a full Hopf bimodule of $(\cL, \cR, S)$ is a Hopf bimodule of $(\cL, \cR, S)$, as well as a left and right $(\cL, \cR, S)$-comodule, such that
    \begin{itemize}
        \item The underlying right $\cR$ comodule structure of the Hopf bimodule is the same as the underlying right $\cR$ comodule structure of the right $(\cL, \cR, S)$ comodule.
        \item  The underlying left $\cL$ comodule structure of the Hopf bimodule is the same as the underlying left $\cL$ comodule structure of the left $(\cL, \cR, S)$ comodule.
    \end{itemize}

    We denoted the category of full Hopf bimodules of $(\cL, \cR, S)$ by ${}^{\cH}_{H}\mathcal{M}{}_{H}^{\cH}$.
\end{defi}

\begin{prop}\label{prop. prop of full Hopf bimodule}
    Let $\Gamma$ be a full Hopf bimodule of $(\cL, \cR, S)$, then all the left and right coactions are pairwise cocommutative:
    \begin{align*}
       (\id\ot_{B}\delta_{L})\circ{}_{L}\delta=&({}_{L}\delta\ot_{B}\id)\circ\delta_{L},\quad(\id\ot_{A}\delta_{R})\circ{}_{R}\delta=({}_{R}\delta\ot_{A}\id)\circ\delta_{R},\\
       (\id\ot_{B}\delta_{R})\circ{}_{L}\delta=&({}_{L}\delta\ot_{A}\id)\circ\delta_{R},\quad(\id\ot_{A}\delta_{L})\circ{}_{R}\delta=({}_{R}\delta\ot_{B}\id)\circ\delta_{L}.
    \end{align*}
    Moreover, all the left and right actions and left and right coactions are pairwise covariant. In other words, there are two extra left and right covariant structures given by  $({}_{R}\delta, \la, \ra)$ and $(\delta_{L}, \la, \ra)$, where the left covariant structure  is a left $\cR$-comodule and a $\cR$-bimodule, such that
    \begin{itemize}
        \item [(1)] $t_{R}(a)\la\rho\ra t_{R}(a')=a'{}_{*}\rho_{*}a$,  $\forall a, a'\in A, \rho\in \Gamma$.
        \item [(2)] $(X\la \rho\ra Y)\rmo\ot_{B}(X\la \rho\ra Y)\rz=X\ro\rho\rmo Y\ro\ot_{B}X\rt\la\rho\rz\ra Y\rt$, $\forall X, Y\in \cR, \rho\in \Gamma$.
    \end{itemize}
    The right covariant structure  is a right $\cL$-comodule and a $\cL$-bimodule, such that
    \begin{itemize}
        \item [(1)] $t_{L}(b)\la\rho\ra t_{L}(b')=b'^{*}\rho^{*}b$,  $\forall b, b'\in B, \rho\in \Gamma$.
        \item [(2)] $(X\la \rho\ra Y)\z\ot_{A}(X\la \rho\ra Y)\o=X\o\la\rho\z\ra Y\o\ot_{A}X\t\rho\o Y\t$, $\forall X, Y\in \cL, \rho\in \Gamma$.
    \end{itemize}
\end{prop}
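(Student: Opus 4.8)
The plan is to take the third cocommutativity identity as given---it is literally the defining condition of a Hopf bimodule (Definition~\ref{def. bicovariant})---and to bootstrap the other three identities, together with the two covariance statements, out of it using the full comodule compatibilities and the antipode. Dually, the fourth identity is precisely the defining condition of an anti-Hopf bimodule (Definition~\ref{def. bicovariant1}), so proving it amounts to showing that a full Hopf bimodule is automatically an anti-Hopf bimodule.

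The heart of the argument is to pin down how the four coactions are related. Since $\Gamma$ is at once a left and a right $(\cL,\cR,S)$-comodule, we have at our disposal the two mixed coassociativity conditions tying ${}_L\delta$ to ${}_R\delta$ and $\delta_R$ to $\delta_L$ through the coproducts $\Delta_L$ and $\Delta_R$. I would combine these with the relation expressing $\Delta_R$ through $\Delta_L$ and $S$ from~\eqref{equ. left coproduct to right coproduct} and with the antipode axioms of Definitions~\ref{def. full Hopf algebroid1} and~\ref{def. full Hopf algebroid3}, in order to express the ``reversed-side'' coactions ${}_R\delta$ and $\delta_L$ back in terms of the ``primary'' coactions ${}_L\delta$ and $\delta_R$ and the antipode; the inversion needed here is exactly what bijectivity of $S$ and the (anti-)right Hopf algebroid identities of the appendix supply, while the leftover coproduct leg is collapsed with the appropriate counit ($\varepsilon_L$, or $\varepsilon_R=\varepsilon_L\circ S$). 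Feeding these expressions into the already-known third identity, and using the anti-coalgebra identities $S^{\pm}(X)\ro\ot_A S^{\pm}(X)\rt=S^{\pm}(X\t)\ot_A S^{\pm}(X\o)$ and $S^{\pm}(X)\o\ot_B S^{\pm}(X)\t=S^{\pm}(X\rt)\ot_B S^{\pm}(X\ro)$ to account for the left/right flip, each of the first, second and fourth identities collapses onto the third after applying $S$ or $S^{-1}$ to the appropriate legs.

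For the two additional covariant structures the underlying bimodule is in both cases the given $H$-bimodule, so only conditions (1) and (2) need checking. Conditions (1) are immediate from the identifications $s_R=t_L$, $t_R=S^{-1}\circ t_L$ from~\eqref{equ. left source target maps to right source and target maps}, together with the already-noted coincidence of the various $A$- and $B$-bimodule structures carried by $\Gamma$. For conditions (2) I would transport the left-$\cL$-covariance of ${}_L\delta$ (Definition~\ref{def. left covariant}(2)) to the left-$\cR$-covariance of ${}_R\delta$, and the right-$\cR$-covariance of $\delta_R$ (Definition~\ref{def. right covariant}(2)) to the right-$\cL$-covariance of $\delta_L$, through the same coaction-conversion used above; here one uses that $S$ is an anti-algebra map to rewrite the products $X\la\rho\ra Y$ correctly, and the coproduct/antipode identities to realign the coproduct legs.

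The main obstacle is the bookkeeping in the middle step. The four coactions land alternately in $\ot_A$ and $\ot_B$, so converting between a left and a right coaction forces one to track $S$ versus $S^{-1}$ and $\varepsilon_L$ versus $\varepsilon_R$ simultaneously, and at every stage one must check that the expressions descend to the balanced tensor products, i.e.\ respect the relevant Takeuchi subspaces. The decisive inputs are the new (anti-)right Hopf algebroid identities gathered in the appendix: they are exactly what is required to collapse a $\Delta_L$- or $\Delta_R$-leg against an antipode, and hence to reduce the three outstanding identities to the single given one.
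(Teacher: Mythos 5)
You have the right skeleton: the third identity is literally the Hopf-bimodule axiom of Definition~\ref{def. bicovariant} and is the natural starting point, the fourth is the anti-Hopf-bimodule condition of Definition~\ref{def. bicovariant1}, and everything should follow by re-expressing one family of coactions through the other and substituting into the known identity. The gap is in the mechanism you propose for that re-expression. The conversion between the $\cL$-coactions and the $\cR$-coactions on a full Hopf bimodule is \emph{antipode-free}: it is the counital identity
\begin{equation*}
\rho\z\ot_{B}\rho\o=\rho\rz\, s_{R}(\varepsilon_{R}(\rho\ro\o))\ot_{B}\rho\ro\t
\end{equation*}
(equation~\eqref{equ. left coaction equation} in the paper), which follows from the mixed coassociativity axiom $(\id\ot_{A}\Delta_{L})\circ\delta_{R}=(\delta_{R}\ot_{B}\id)\circ\delta_{L}$ of a right $(\cL,\cR,S)$-comodule together with the counit law --- nothing else. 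The paper's entire proof consists of substituting this formula (and its left-comodule analogue), invoking the given third identity, applying the mixed coassociativity axiom a second time, and collapsing with the counit; the extra covariant structures are obtained the same way, with Definition~\ref{def. right covariant}(2) and $(\id\ot_{B}\Delta_{R})\circ\Delta_{L}=(\Delta_{L}\ot_{A}\id)\circ\Delta_{R}$ as the additional inputs. Neither $S$, nor its bijectivity, nor any of the appendix identities enters at any point.

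By contrast, your plan hinges on expressing $\delta_{L}$ through $\delta_{R}$ ``and the antipode'' and on the claim that the outstanding identities ``collapse onto the third after applying $S$ or $S^{-1}$ to the appropriate legs''. This step fails as stated. The antipode cannot act on the $\Gamma$-legs of a coaction, and the coproduct conversion~\eqref{equ. left coproduct to right coproduct} requires applying $S^{-1}$ to the element \emph{before} taking $\Delta_{R}$, an operation that has no analogue on a comodule; consequently there is no leg-wise antipode formula relating $\delta_{L}$ to $\delta_{R}$ (already for $\Gamma=H$, where $\delta_{L}=\Delta_{L}$ and $\delta_{R}=\Delta_{R}$, one has $\Delta_{L}(X)\neq X\ro\ot_{B}S^{\pm 1}(X\rt)$ in general). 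Leg-wise application of $S^{\pm 1}$ is the device that manufactures a coaction on the \emph{opposite side}; it does not mediate between $\delta_{L}$ and $\delta_{R}$, which both sit on the same side and differ only in which bialgebroid structure they invoke, and the same applies to the pair ${}_{L}\delta$, ${}_{R}\delta$. Your own phrase ``the leftover coproduct leg is collapsed with the appropriate counit'' is in fact the whole correct mechanism: once you isolate that observation and drop the antipode machinery wrapped around it, your argument becomes the paper's proof.
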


\begin{proof}
    As we know
    \begin{align}\label{equ. left coaction equation}
        \rho\z\ot_{B}\rho\o=\rho\z\rz {}^{.}(\varepsilon_{R}(\rho\z\ro))\ot_{B}\rho\o=\rho\rz s_{R}(\varepsilon_{R}(\rho\ro\o))\ot_{B}\rho\ro\t,
    \end{align}
    We have
    \begin{align*}
        ({}_{L}\delta\ot_{B}\id)\circ \delta_{L}(\rho)=&\rho\z\mo\ot_{B}\rho\z\z\ot_{B}\rho\o\\
        =&\rho\rz\mo\ot_{B}\rho\rz\z s_{R}(\varepsilon_{R}(\rho\ro\o))\ot_{B}\rho\ro\t\\
        =&\rho\mo\ot_{B}\rho\z\rz s_{R}(\varepsilon_{R}(\rho\z\ro\o))\ot_{B}\rho\z\ro\t\\
        =&\rho\mo\ot_{B}\rho\z\z\rz s_{R}(\varepsilon_{R}(\rho\z\z\ro))\ot_{B}\rho\z\o\\
        =&\rho\mo\ot_{B}\rho\z\z\ot_{B}\rho\o\\
        =&(\id\ot_{B}\delta_{L})\circ{}_{L}\delta(\rho),
    \end{align*}
    where the 3rd step uses the cocommutativity of $\delta_{R}$ and ${}_{L}\delta$, the 4th step uses the property of right comodule structure of $(\cL, \cR, S)$. Similarly, we have
    \[(\id\ot_{A}\delta_{R})\circ{}_{R}\delta=({}_{R}\delta\ot_{A}\id)\circ\delta_{R},\quad(\id\ot_{A}\delta_{L})\circ{}_{R}\delta=({}_{R}\delta\ot_{B}\id)\circ\delta_{L}.\]
    Now, let's show the extra right covariant structure, by using (\ref{equ. left coaction equation}) we have
    \begin{align*}
        (X\la \rho\ra Y)\z\ot_{A}(X\la \rho\ra Y)\o
        =&(X\ro\la\rho\rz\ra Y\ro)s_{R}(\varepsilon_{R}(X\rt\o\rho\ro\o Y\rt\o))\ot_{B}X\rt\t\rho\ro\t Y\rt\t\\
        =&(X\o\ro\la\rho\z\rz\ra Y\o\ro)s_{R}(\varepsilon_{R}(X\o\rt\rho\z\ro Y\o\rt))\ot_{B}X\t\rho\o Y\t\\
        =&(X\o\la\rho\z\ra Y\o)\rz s_{R}(\varepsilon_{R}((X\o\la\rho\z\ra Y\o)\ro))\ot_{B}X\t\rho\o Y\t\\
        =&X\o\la\rho\z\ra Y\o\ot_{B}X\t\rho\o Y\t.
    \end{align*}
    Similarly, we also have the extra left covariant bimodule structure.
\end{proof}

\begin{thm}\label{thm. category equivalence}
    Let $(\cL, \cR, S)$ be a Hopf algebroid, there is a one to one correspondence between the $(\cL, \cR, S)$-Hopf bimodules and the right-right Yetter-Drinfeld modules of $\cR$. Moreover, there is a one to one correspondence between the full Hopf bimodules and the full right-right Yetter-Drinfeld modules of $(\cL, \cR, S)$.
\end{thm}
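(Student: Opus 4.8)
The plan is to realise the correspondence through the left $\cL$-coinvariants, exactly as in the Hopf-algebra case of \cite{schau2}, and to recover the full bimodule from its coinvariants by a fundamental-theorem-of-Hopf-modules argument. Given a Hopf bimodule $\Gamma\in{}_{H}^{\cL}\mathcal{M}_{H}^{\cR}$, I set
\[
\Lambda:=\{\rho\in\Gamma\mid {}_{L}\delta(\rho)=1_{\cL}\ot_{B}\rho\},
\]
keep the right $\cR$-coaction $\delta_{R}$, and define a right $\cR$-action by
\[
\rho\bra X:=S(X\ro)\la\rho\ra X\rt,\qquad X\in\cR=H .
\]
Conversely, given $\Lambda\in\mathcal{YD}_{\cR}^{\cR}$ I reconstruct a Hopf bimodule on $\cL\ot_{B}\Lambda$ with left coaction $\Delta_{L}\ot_{B}\id$, left action $Y\la(X\ot_{B}\rho)=YX\ot_{B}\rho$, right action $(X\ot_{B}\rho)\bra Y=XY\ro\ot_{B}\rho\bra Y\rt$, and right coaction $\delta_{R}(X\ot_{B}\rho)=(X\ro\ot_{B}\rho\rz)\ot_{A}X\rt\rho\ro$. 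The theorem then amounts to showing these two assignments are mutually inverse.

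First I would verify that the forward assignment is well defined. Using the second condition of Definition \ref{def. right covariant} together with $\Delta_{R}(s_{R}(a))=1\ot_{A}s_{R}(a)$, $\Delta_{R}(t_{R}(a))=t_{R}(a)\ot_{A}1$ and $S\circ t_{R}=s_{R}$, one reads off $\rho\bra s_{R}(a)=\rho\ra s_{R}(a)=\rho^{.}a$ and $\rho\bra t_{R}(a)=s_{R}(a)\la\rho=a^{.}\rho$, so the base-point conditions of Definition \ref{def. right-right YD} hold, while associativity of $\bra$ follows from $S$ being an anti-algebra map. That $\rho\bra X$ is again coinvariant is checked by applying ${}_{L}\delta$, using the covariance identity of Definition \ref{def. left covariant}(2), the coinvariance of $\rho$, the coproduct-compatibility of Definition \ref{def. full Hopf algebroid2} relating $\Delta_{L}$ and $\Delta_{R}$, and one of the antipode axioms of Definition \ref{def. full Hopf algebroid1}; the left $\cL$-leg then collapses to $1_{\cL}$ exactly as $S(X\o)\o X\t\ot S(X\o)\t=1\ot S(X)$ does classically. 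The restriction of $\delta_{R}$ to $\Lambda$ lands in $\Lambda\ot_{A}\cR$ because of the Hopf-bimodule compatibility $(\id\ot_{B}\delta_{R})\circ{}_{L}\delta=({}_{L}\delta\ot_{A}\id)\circ\delta_{R}$. The heart of this direction is the Yetter-Drinfeld identity of Definition \ref{def. right-right YD}, which I would establish by expanding both sides with the right covariance of $\delta_{R}$ and collapsing an antipode leg against the counit; this is the same type of computation already carried out in the proposition at the end of Section \ref{sec2}.

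For the reverse direction the decisive structural input is the fundamental theorem of Hopf modules for the left Hopf algebroid $\cL$: the coaction ${}_{L}\delta$ and the left multiplication make $\Gamma$ a left Hopf module, so the map
\[
\cL\ot_{B}\Lambda\longrightarrow\Gamma,\qquad X\ot_{B}\rho\longmapsto X\la\rho,
\]
is bijective, its inverse being built from the coinvariant projection $\rho\mapsto S(\rho\mo)\la\rho\z$; here the invertibility of $\lambda$ (equivalently the translation map $X_{+}\ot_{B^{op}}X_{-}$) is precisely what guarantees bijectivity. I would then check that under this isomorphism the right $\cR$-coaction, the right action and the $B$-module structures transport to the reconstructed ones, giving $G\circ F\cong\id$, while $F\circ G\cong\id$ is immediate since the left coinvariants of $\cL\ot_{B}\Lambda$ are identified with $\Lambda$ with the induced structures matching the originals. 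I expect the main obstacle to be the Yetter-Drinfeld verification and the transport of $\delta_{R}$ across the structure isomorphism, where one must repeatedly juggle $\Delta_{L}$, $\Delta_{R}$ and $S$ using the identities collected at the end of Section \ref{sec2} and in the appendix.

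Finally, for the statement about full structures I would run the same construction and in addition restrict the right $\cL$-coaction $\delta_{L}$ to $\Lambda$. By the pairwise cocommutativity of the four coactions in Proposition \ref{prop. prop of full Hopf bimodule}, $\delta_{L}$ maps $\Lambda$ into $\Lambda\ot_{B}\cL$, and together with $\delta_{R}$ it satisfies the compatibility $(\id\ot_{B}\Delta_{R})\circ\delta_{L}=(\delta_{L}\ot_{A}\id)\circ\delta_{R}$ inherited from $\Gamma$, so $\Lambda$ becomes a right $(\cL,\cR,S)$-comodule and hence a full right-right Yetter-Drinfeld module. Conversely this extra coaction produces the fourth coaction on $\cL\ot_{B}\Lambda$, and since all the additional structure is transported by the same isomorphism $\cL\ot_{B}\Lambda\cong\Gamma$, the refined assignments remain mutually inverse, yielding the correspondence between full Hopf bimodules and full right-right Yetter-Drinfeld modules.
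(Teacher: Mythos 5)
You follow the same route as the paper: pass to the left coinvariants ${}^{co\cL}\Gamma$ with the restricted $\delta_{R}$ and an antipode-conjugation action, reconstruct a Hopf bimodule on $\cL\ot\Lambda$ with essentially the paper's formulas, and treat the full case by restricting $\delta_{L}$. But your central formula has a genuine defect: the action
\[
\rho\bra X:=S(X\ro)\la\rho\ra X\rt
\]
is not well defined. The element $\Delta_{R}(X)=X\ro\ot_{A}X\rt$ lives in the balanced tensor product $\cR\ot_{A}\cR$, in which $X\ro s_{R}(a)\ot_{A}X\rt=X\ro\ot_{A}X\rt t_{R}(a)$; applying your recipe to the two sides (and using $S(s_{R}(a))=S(t_{L}(a))=s_{L}(a)$) shows it is representative-independent only if $s_{L}(a)\la\xi=\xi\ra t_{R}(a)$ for the elements $\xi$ involved. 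In a Hopf algebroid, however, $s_{L}$ and $t_{R}$ agree only as subrings, not as maps: with the paper's conventions $t_{R}=S^{-1}\circ t_{L}$ while $s_{L}=S\circ t_{L}$, so they coincide exactly when $S^{2}$ fixes $t_{L}(B)$ pointwise, which the axioms of Definition \ref{def. full Hopf algebroid1} do not force. Concretely, take $H=B^{e}$ with $s_{L}(b)=b\ot 1$, $t_{L}(b)=1\ot b$, $\Delta_{L}(b\ot b')=(b\ot 1)\ot_{B}(1\ot b')$ and $S(b\ot b')=b'\ot\nu(b)$ for a non-identity algebra automorphism $\nu$ of $B$; this satisfies Definition \ref{def. full Hopf algebroid1}, and one finds $t_{R}(a)=\nu^{-1}(a)\ot 1$ and $\Delta_{R}(b\ot b')=(b\ot 1)\ot_{A}(1\ot b')$. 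For the regular Hopf bimodule $\Gamma=H$ and the coinvariant $\rho=1$, the representatives $(b\ot a)\ot(1\ot b')$ and $(b\ot 1)\ot(\nu^{-1}(a)\ot b')$ of the \emph{same} element of $\cR\ot_{A}\cR$ give the distinct values $a\ot b'\nu(b)$ and $\nu^{-1}(a)\ot b'\nu(b)$ under your formula whenever $\nu(a)\neq a$. The same objection hits your coinvariant projection $\rho\mapsto S(\rho\mo)\la\rho\z$, which requires $s_{L}(b)$ to slide past $S(X)$, and your unspecified balancing in $\cL\ot_{B}\Lambda$.

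This is precisely the subtlety the paper's proof is organized around: the action is defined as $\rho\bra X:=X\m\la\rho\ra X\p$ via the translation map $X\m\ot_{A^{op}}X\p=\hat{\lambda}^{-1}(1\ot_{A}X)$ of Definition \ref{defHopf1}, which equals $S(X\o)\ot_{A^{op}}X\t$ --- the antipode against the \emph{left} coproduct, viewed inside $\cR\ot_{A^{op}}\cR$, not against $\Delta_{R}$ --- and well-definedness is then deduced from the lemma that coinvariants satisfy $t_{R}(a)\la\eta=\eta\ra t_{R}(a)$ together with the appendix identity \eqref{equ. source and target map with hlambda inv 5}; likewise the inverse of $X\ot\eta\mapsto X\la\eta$ is written with translation maps, $\Phi(\rho)=\rho\mo{}_{+}\ot_{B^{op}}\rho\mo{}_{-}\la\rho\z$, never as ``$S$ of a coproduct leg''. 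Your argument can be repaired by making exactly these replacements and then checking representative-independence as above; as written, the construction fails for any Hopf algebroid whose antipode does not square to the identity on the base algebra, so the gap is real even though the overall strategy is sound and matches the paper's.
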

\begin{proof}
    Let $\Gamma$ be a $(\cL, \cR, S)$-Hopf bimodule, and ${}^{co\cL}\Gamma$ be the left invariant subspace of $\Gamma$. More precisely,
    \[{}^{co\cL}\Gamma=\{\ \eta\in\Gamma \ |\ {}_{L}\delta(\eta)=1\ot_{B}\eta\}.\]
 Now, we are going to show ${}^{co\cL}\Gamma$ is a right-right Yetter-Drinfeld module of $\cR$.
 First, the right action is given by
 \begin{align*}
     \eta\bra X:=X\m\la \eta\ra X\p,
 \end{align*}
for all $X\in \cR$ and $\eta\in \Lambda_{\Gamma}$. This is well defined, since for any $\eta\in {}^{co\cL}\Gamma$, we have ${}_{L}\delta(t_{R}(a)\la \eta)={}_{L}\delta(\eta\ra t_{R}(a))$, therefore, $t_{R}(a)\la \eta=\eta\ra t_{R}(a)$.
We can see $X\m\la \eta\ra X\p$ is left invariant
\begin{align*}
    {}_{L}\delta(X\m\la \eta\ra X\p)=S(X\o)\o X\t\ot_{B}S(X\o)\t \eta X\th=1\ot_{B}X\m\la \eta\ra X\p,
\end{align*}
and it also commutes with $t_{R}(a)$ by (\ref{equ. source and target map with hlambda inv 5}). So the right action is well defined.

The right coaction is defined to be the restriction of $\delta_{R}$ on the subspace  ${}^{co\cL}\Gamma$ with the same underlying $A$-bimodule structure. This is also well defined as $\delta_{R}$ cocommutes with ${}_{L}\delta$, so $\delta_{R}({}^{co\cL}\Gamma)\subseteq {}^{co\cL}\Gamma\times_{A}\cR$. 
We can also see
\begin{align*}
    \eta\bra s_{R}(a)=\eta\ra s_{R}(a)=\eta^{.}a, \quad \eta\bra t_{R}(a)=s_{R}(a)\la \eta=a^{.}\eta.
\end{align*}
Finally,
\begin{align*}
    (\eta\bra X\rt)\rz\ot_{A}X\ro(\eta\bra X\rt)\ro=&X\rt\m\ro\la\eta\rz\ra X\rt\p\ro\ot_{A}X\ro X\rt\m\rt \eta\ro X\rt\p\rt\\
    =&X\rt\p\m\la\eta\rz\ra X\rt\p\p\ro\ot_{A}X\ro X\rt\m \eta\ro X\rt\p\p\rt\\
    =&X\ro\m\la\eta\rz\ra X\ro\p\ot_{A}\eta\ro X\rt\\
    =&\eta\rz \bra X\ro \ot_{A}\eta\ro X\rt.
\end{align*}

Conversely, if $\Lambda$ is a right-right Yetter-Drinfeld module of $\cR$,
define $\Gamma_{\Lambda}:=\cL\ot_{B^{op}}\Lambda=\cL\ot_{A}\Lambda$, where $\tens_{B^{op}}$ is induced by $t_{L}$ (so $Xt_{L}(a)\ot_{B^{op}}\eta=X\ot_{B^{op}}a^{.}\eta$, or equivalently, $Xs_{R}(a)\ot_{A}\eta=X\ot_{A}\eta\bra t_{R}(a)$ by the $A$-bimodule structure on $\cR$ and $\Lambda$ for all $X\in \cR, \eta\in\Lambda$ and $b\in B$). Now, we are going to show $\Gamma_{\Lambda}$ is a Hopf bimodule of $(\cL, \cR, S)$.
First, let's show the left covariant bimodule structure of $\cL$. As a left covariant bimodule of $\cL$, the left coaction is given by
\begin{align*}
    {}_{L}\delta(X\ot_{B^{op}}\eta):=X\o\ot_{B}X\t\ot_{B^{op}}\eta,
\end{align*}
where the underlying $B$-bimodule associate to the left $\cL$ comodule structure is $b.(X\ot\eta).b'=s_{L}(b)Xs_{L}(b)\ot_{B^{op}}\eta$. It is not hard to see this is a well defined comodule structure. The $\cL$-bimodule structure is given by
\begin{align*}
    Y\la(X\ot_{A}\eta)\ra Z:=YXZ\ro\ot_{A}\eta \bra Z\rt.
\end{align*}
We can see this formula factor though $Z\ro\ot_{A}Z\rt$. 
And
\begin{align*}
    s_{L}(a)\la(X\ot_{B^{op}}\eta)\ra s_{L}(a')=s_{L}(a)Xs_{L}(a')\ot_{B^{op}}\eta=a.(X\ot_{B^{op}}\eta).a',
\end{align*}
 since $s_{L}(B)=t_{R}(A)$ as subrings of $\cL$.
 Moreover,
\begin{align*}
    (Y\la (X\ot_{B^{op}}&\eta)\ra Z)\mo\ot_{B}(Y\la (X\ot_{B^{op}}\eta)\ra Z)\z\\
    =&Y\o X\o Z\ro\o\ot_{B}Y\t X\t Z\ro\t\ot_{B^{op}}\eta\bra Z\rt\\
    =&Y\o X\o Z\o\ot_{B}Y\t X\t Z\t\ro\ot_{B^{op}}\eta\bra Z\t\rt,
\end{align*}
where the last step use $(\id\ot_{B}\Delta_{R})\circ\Delta_{L}=(\Delta_{L}\ot_{A}\id)\circ\Delta_{R}$. So we have a left covariant bimodule structure on $\cL\ot_{B^{op}}\Lambda$. Second, let's show the right covariant bimodule structure on $\cL\ot_{B^{op}}\Lambda$, the right coaction is given by
\begin{align*}
    \delta_{R}(X\ot_{B^{op}}\eta):=X\ro\ot_{B^{op}}\eta\rz\ot_{A}X\rt\eta\ro,
\end{align*}
where the underlying $A$-bimodule structure associate to this right $\cR$-comodule is $a^{.}(X\ot_{B^{op}}\eta)^{.}a'=s_{R}(a)X\ot_{B^{op}}\eta^{.}a'$. We can see this is a well defined coaction as it factors though $X\ot_{B^{op}}\eta$.
And
\begin{align*}
    s_{R}(a)\la(X\ot_{B^{op}}\eta)\ra s_{R}(a')=&s_{R}(a)X\ot_{B^{op}}\eta\bra s_{R}(a)=s_{R}(a)X\ot_{B^{op}}\eta^{.}a'\\
    =&a^{.}(X\ot_{B^{op}}\eta)^{.}a'.
\end{align*}
Moreover,
\begin{align*}
    (Y\la (X\ot_{B^{op}}&\eta)\ra Z)\rz\ot_{A}(Y\la (X\ot_{B^{op}}\eta)\ra Z)\ro\\
    =&Y\ro X\ro Z\ro\ot_{B^{op}}(\eta\bra Z\rth)\rz\ot_{A}Y\rt X\rt Z\rt(\eta\bra Z\rth)\ro\\
    =&Y\ro X\ro Z\ro\ot_{B^{op}}\eta\rz\bra Z\rt\ot_{A}Y\rt X\rt \eta\ro Z\rth\\
    =&Y\ro\la (X\ot_{B^{op}}\eta)\rz \ra Z\ro\ot_{A}Y\rt (X\ot_{B^{op}}\eta)\ro Z\rt.
\end{align*}
So $\cL\ot_{B^{op}}\Lambda$ is a right covariant bimodule of $\cR$. Finally,
\begin{align*}
    (\id\ot_{B}\delta_{R})\circ {}_{L}\delta(X\ot_{B^{op}}\eta)=&X\o\ot_{B}X\t\ro\ot_{B^{op}}\eta\rz\ot_{A}X\t\rt\eta\ro\\
    =&X\ro\o\ot_{B}X\ro\t\ot_{B^{op}}\eta\rz\ot_{A}X\rt\eta\ro\\
    =&({}_{L}\delta\ot_{A}\id)\circ\delta_{R}(X\ot_{B^{op}}\eta).
\end{align*}

Let $\Lambda$ be a right-right Yetter-Drinfeld module of $\cR$. It is clear that $\Lambda\simeq {}^{co\cL}(\cL\ot_{B^{op}}\Lambda)$.
Conversely, let $\Gamma$ be a Hopf bimodule of $(\cL, \cR, S)$. Let's show $\Phi: \Gamma\to \cL\ot_{B^{op}}{}^{co\cL}\Gamma$ given by
\begin{align*}
    \Phi(\rho):=\rho\mo{}_{+}\ot_{B^{op}}\rho\mo{}_{-}\la\rho\z
\end{align*}
is an isomorphism. The image of $\Phi$ belongs to $\cL\ot_{B^{op}}{}^{co\cL}\Gamma$, indeed
\begin{align*}    (\id\ot_{B^{op}}{}_{L}\delta)=&\rho\mo{}_{+}\ot_{B^{op}}\rho\mo{}_{-}\o\rho\z \mo\ot_{B}\rho\mo{}_{-}\t\la\rho\z\z\\
    =&\rho\mt{}_{++}\ot_{B^{op}}\rho\mt{}_{-}\rho\mo\ot_{B}\rho\mt{}_{+-}\la\rho\z\\
    =&\rho\mo{}_{+}\ot_{B^{op}}1\ot_{B}\rho\mo{}_{-}\la\rho\z.
\end{align*}
We can also see $\Phi$ is a morphism between Hopf bimodule.

\begin{align*}
    \Phi(Y\la\rho)=&Y\o{}_{+}\rho\mo{}_{+}\ot_{B^{op}}(\rho\mo{}_{-}Y\o{}_{-}Y\t)\la\rho\z=Y\la\Phi(\rho),
\end{align*}
 and
\begin{align*}
    \Phi(\rho\ra Z)=&\rho\mo{}_{+}Z\o{}_{+}\ot_{B^{op}}Z\o{}_{-}\la(\rho\mo{}_{-}\la\rho\z)\ra Z\t\\
    =&\rho\mo{}_{+}Z\ro\ot_{B^{op}}Z\rt\m\la(\rho\mo{}_{-}\la\rho\z)\ra Z\rt\p\\
    =&\rho\mo{}_{+}Z\ro\ot_{B^{op}}(\rho\mo{}_{-}\la\rho\z)\bra Z\rt\\
    =&\Phi(\rho)\ra Z,
\end{align*}
where the second step uses
\begin{align*}
    Z\o{}_{+}\ot_{B^{op}}Z\o{}_{-}\ot_{A^{op}}Z\t=&S^{-1}(S(Z\o)\t)\ot_{B^{op}}S(Z\o)\o\ot\ot_{A^{op}}Z\t\\
    =&Z\o\ro\ot_{B^{op}}S(Z\o\rt)\ot_{A^{op}}Z\t\\
    =&Z\ro\ot_{B^{op}}Z\rt\m\ot_{A^{op}}Z\rt\p.
\end{align*}
So $\Phi$ is $\cL$-bilinear. We can also see $\Phi(b.\rho.b')=b.\Phi(\rho).b'$ and left $\cL$-colinear
\begin{align*}
    {}_{L}\delta\circ\Phi(\rho)=&\rho\mo{}_{+}\o\ot_{B}\rho\mo{}_{+}\t\ot_{B^{op}}\rho\mo{}_{-}\la\rho\z\\
    =&\rho\mt\ot_{B}\rho\mo{}_{+}\ot_{B^{op}}\rho\mo{}_{-}\la\rho\z\\
    =&(\id\ot_{B}\Phi)\circ{}_{L}\delta(\rho).
\end{align*}
Similarly, $\Phi(a^{.}\rho^{.}a')=a^{.}\Phi(\rho)^{.}a'$, indeed, by using $t_{L}(a)=s_{R}(a)$, we have
\begin{align*}
    \Phi(a^{.}\rho^{.}a')=&\rho{}_{+}\ot_{B^{op}}(\rho{}_{-}t_{L}(a))\la \rho\ra t_{L}(a')\\
    =&s_{R}(a)\rho_{+}\ot_{B^{op}}\rho_{-}\la\rho\z \ra s_{R}(a'),
\end{align*}
and
\begin{align*}
    \delta_{R}\circ\Phi(\rho)=&\rho\mo{}_{+}\ro\ot_{B^{op}}\rho\mo{}_{-}\ro\la\rho\z\rz\ot_{A}\rho\mo{}_{+}\rt\rho\mo{}_{-}\rt\rho\z\ro\\
    =&S^{-1}(S(\rho\mo)\th)\ot_{B^{op}}S^{-1}(S(S(\rho\mo)\o)\t)\la\rho\z\rz\ot_{A}S^{-1}(S(S(\rho\mo)\o)\o S(\rho\mo)\t)\rho\z\ro\\
    =&S^{-1}(S(\rho\mo)\t)\ot_{B^{op}}S(\rho\mo)\o\la\rho\z\rz\ot_{A}\rho\z\ro\\
    =&\rho\mo{}_{+}\ot_{B^{op}}\rho\mo{}_{-}\la\rho\z\rz\ot_{A}\rho\z\ro\\
    =&\rho\rz\mo{}_{+}\ot_{B^{op}}\rho\rz\mo{}_{-}\la\rho\rz\z\ot_{A}\rho\ro\\
    =&(\Phi\ot_{A}\id)\circ\delta_{R}(\rho),
\end{align*}
where we use $(\id\ot_{B}\delta_{R})\circ{}_{L}\delta=({}_{L}\delta\ot_{A}\id)\circ\delta_{R}$ in the fifth step. So $\Phi$ is a right $\cR$ comodule map. The inverse $\Phi^{-1}$ is given by
\begin{align*}
    \Phi^{-1}(X\ot_{B^{op}}\eta):=X\la\eta,
\end{align*}
for an $X\ot_{B^{op}}\eta\in \cL\ot_{B^{op}}\Lambda_{\Gamma}$. We can see on the one hand

\begin{align*}
    \Phi^{-1}\circ\Phi(\rho)=(\rho\mo{}_{+}\rho\mo{}_{-})\la\rho\z=s_{L}\circ\varepsilon(\rho\mo)\la \rho\z=\varepsilon(\rho\mo). \rho\z=\rho,
\end{align*}
on the other hand,
\begin{align*}
    \Phi\circ\Phi^{-1}(X\ot_{B^{op}}\eta)=X\o{}_{+}\ot_{B^{op}}(X\o{}_{-}X\t)\la\eta=X\ot_{B^{op}}\eta.
\end{align*}

Now, if $\Gamma$ is a full Hopf bimodule, then we want to show the right-right Yetter-Drinfeld module ${}^{co\cL}\Gamma$ together with the right
$\cL$-comodule structure defined to be the restriction of $\delta_{L}$ on the subspace  ${}^{co\cL}\Gamma$ with the same underlying $B$-bimodule structure is a full Yetter-Drinfeld module. As $\delta_{L}$ cocommutes with ${}_{L}\delta$, we have $\delta_{L}({}^{co\cL}\Gamma)\subseteq {}^{co\cL}\Gamma\times_{B}\cL$. 
Thus ${}^{co\cL}\Gamma$ is a well defined right comodule of $(\cL, \cR, S)$ and therefore a full Yetter-Drinfeld module.

Conversely, if $\Lambda$ is a full Yetter-Drinfeld module of $(\cL, \cR, S)$. By the same construction as above, we first define the extra left $\cR$-comodule and right $\cL$-comodule structures by:
\[{}_{R}\delta(X\ot_{A}\eta):=X\ro\ot_{A} X\rt\ot_{A}\eta, \quad\delta_{L}(X\ot_{A}\eta):=X\o\ot_{A}\eta\z\ot_{B}X\t\eta\o,\]
with the underlying $A$ and $B$-bimodule structure give by
\[a_{*}(X\ot_{A}\eta)_{*}a'=t_{R}(a')Xt_{R}(a)\ot_{A}\eta,\quad b^{*}(X\ot_{A}\eta)^{*}b'=t_{L}(b')X\ot_{A}b^{*}\eta. \]
It is not hard to see both of them are well-defined and ${}_{R}\delta$ is a coaction. Moreover, we can also check $\delta_{L}$ is a well defined right coaction of $\cL$. Indeed,
\begin{align*}
    \delta_{L}(Xt_{L}(a)\ot_{A}\eta)=X\o\ot_{A}\eta\z\ot_{B}X\t s_{R}(a)\eta\o=\delta_{L}(X\ot_{A}a^{.}\eta),
\end{align*}
and we can see it factors though $\eta\z\ot_{B}\eta\o$:
\begin{align*}
    X\o\ot_{A}\eta\z\ot_{A}X\t s_{L}(b)\eta\o=&X\o t_{L}(b)\ot_{A}\eta\z\ot_{A}X\t \eta\o=X\o \ot_{A} b^{.}\eta\z\ot_{A}X\t \eta\o\\
    =&X\o \ot_{A} \eta\z{}^{*}b\ot_{A}X\t \eta\o.
\end{align*}
Recall that $a^{.}(X\ot_{B^{op}}\eta)^{.}a'=s_{R}(a)X\ot_{B^{op}}\eta^{.}a'$, it is not hard to see $a^{.}(X\o \ot_{A} \eta)^{.}a'=a'^{*}(X\o \ot_{A} \eta)^{*}a$. Thus, $\delta_{R}$ is a $B$-bimodule map associated to the right $\cL$-comodule structure and $\delta_{L}$ is a $A$-bimodule map associated to the right $\cR$-comodule structure. The conditions
\[(\id\ot_{B}\Delta_{R})\circ\delta_{L}=(\delta_{L}\ot_{A}\id)\circ\delta_{R},\quad (\id\ot_{A}\Delta_{L})\circ\delta_{R}=(\delta_{R}\ot_{B}\id)\circ\delta_{L}.\]
are easy to check, as $\Lambda$ is a comodule of $(\cL, \cR, S)$. As a result, $\cL\ot_{A}\Lambda$ is a right comodule of $(\cL, \cR, S)$. It is also a left comodule of $(\cL, \cR, S)$, as the left coactions are the coproducts on the first factor.

Finally, we can see $\Phi^{-1}: \cL\ot_{A}{}^{co\cL}\Gamma\to \Gamma$ with
  $\Phi^{-1}( X\ot_{A}\eta)= X\la \eta$ is right $\cL$-colinear, because by Proposition \ref{prop. prop of full Hopf bimodule} $\Gamma$ is right $\cL$ covariant. It is also left $\cR$-colinear, because $\Gamma$ is left-$\cR$ covariant. Also it is clear that $\Lambda\simeq {}^{co\cL}(\cL\ot_{B^{op}}\Lambda)$ as full Yetter-Drinfeld modules.
So we have the one-to-one correspondence between the full Hopf bimodules and the full right-right Yetter-Drinfeld modules of $(\cL, \cR, S)$.

\end{proof}

Similarly,
\begin{thm}\label{thm. right invariant of the main theorem}
    Let $(\cL, \cR, S)$ be a Hopf algebroid, there is a one to one correspondence between the $(\cL, \cR, S)$-Hopf bimodules and the left-left Yetter-Drinfeld modules of $\cL$. Moreover, there is a one to one correspondence between the full Hopf bimodules and the full left-left Yetter-Drinfeld modules of $(\cL, \cR, S)$.
\end{thm}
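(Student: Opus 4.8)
The plan is to run the proof of Theorem~\ref{thm. category equivalence} in the mirror, interchanging left with right and $\cL$ with $\cR$ throughout: where that argument extracted the left-$\cL$-coinvariants ${}^{co\cL}\Gamma$ and used the right Hopf structure of $\cR$, I will instead use the right-$\cR$-coinvariants $\Gamma^{co\cR}=\{\,\eta\in\Gamma \mid \delta_{R}(\eta)=\eta\ot_{A}1\,\}$ together with the left Hopf structure $\lambda^{-1}$ of $\cL$. Concretely, given a Hopf bimodule $\Gamma$, I equip $\Gamma^{co\cR}$ with the left $\cL$-coaction obtained by restricting ${}_{L}\delta$; this is well defined because the compatibility $(\id\ot_{B}\delta_{R})\circ{}_{L}\delta=({}_{L}\delta\ot_{A}\id)\circ\delta_{R}$ of Definition~\ref{def. bicovariant} forces ${}_{L}\delta(\Gamma^{co\cR})\subseteq\cL\times_{B}\Gamma^{co\cR}$, exactly as $\delta_{R}$ preserved ${}^{co\cL}\Gamma$ before. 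For the left $\cL$-action I set $X\bla\eta:=X_{+}\la\eta\ra X_{-}=X\ro\la\eta\ra S(X\rt)$, the reflection of $\eta\bra X=X\m\la\eta\ra X\p$. Using the right covariance of $\la\,\cdot\,\ra$ (Definition~\ref{def. right covariant}) and the antipode identities of Definition~\ref{def. full Hopf algebroid1}, I check that $X\bla\eta$ lies in $\Gamma^{co\cR}$, that $s_{L}(b)\bla\eta=b.\eta$ and $t_{L}(b)\bla\eta=\eta.b$, and that the left-left Yetter-Drinfeld condition of Definition~\ref{def. left YD} holds; each is the reflection of the corresponding verification in Theorem~\ref{thm. category equivalence}.

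Conversely, given $\Lambda\in{}_{\cL}^{\cL}\mathcal{YD}$ I form $\Gamma_{\Lambda}:=\Lambda\ot_{B}\cR$, the mirror of $\cL\ot_{B^{op}}\Lambda$ with $\cR$ now on the right and the balancing induced by $t_{R}$. I give it the right $\cR$-covariant bimodule structure whose right coaction $\delta_{R}(\eta\ot X)=\eta\ot X\ro\ot_{A}X\rt$ is the coproduct on the $\cR$-leg, and the left $\cL$-covariant comodule structure ${}_{L}\delta(\eta\ot X)=\eta\mo X\o\ot_{B}\eta\z\ot X\t$ mixing the coaction of $\Lambda$ with the coproduct of $X$; these mirror the formulas ${}_{L}\delta(X\ot\eta)=X\o\ot_{B}X\t\ot\eta$ and $\delta_{R}(X\ot\eta)=X\ro\ot\eta\rz\ot_{A}X\rt\eta\ro$ of the first proof. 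As before, the Hopf-bimodule compatibility reduces to the mixed coassociativity $(\id\ot_{A}\Delta_{L})\circ\Delta_{R}=(\Delta_{R}\ot_{B}\id)\circ\Delta_{L}$ of Definition~\ref{def. full Hopf algebroid2}.

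To close the correspondence I verify $\Lambda\cong(\Lambda\ot_{B}\cR)^{co\cR}$ and introduce the comparison map $\Psi:\Gamma\to\Gamma^{co\cR}\ot_{B}\cR$ as the mirror of $\Phi$, built from $\delta_{R}$ and the right Hopf structure $(\,\cdot\,)\m,(\,\cdot\,)\p$ of $\cR$, with inverse $\eta\ot_{B}X\mapsto\eta\ra X$; that $\Psi$ is an $H$-bimodule, left-$\cL$-colinear and right-$\cR$-colinear isomorphism is the reflection of the checks for $\Phi$. For the full statement I further restrict the remaining coactions ${}_{R}\delta$ and $\delta_{L}$ to the coinvariants, using Proposition~\ref{prop. prop of full Hopf bimodule} to see they are preserved, so that $\Gamma^{co\cR}$ becomes a full left-left Yetter-Drinfeld module and the correspondence upgrades to the full categories.

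The real work lies in the first step: confirming that $X\bla\eta=X\ro\la\eta\ra S(X\rt)$ lands in $\Gamma^{co\cR}$ and obeys the left-left Yetter-Drinfeld identity. Since left and right bialgebroids are not literally interchangeable---their Takeuchi products and source/target conventions differ---each antipode identity borrowed from the first proof must be replaced by its counterpart among the (anti-)right Hopf identities collected in the appendix, and one must check that these counterparts genuinely apply. This is exactly where a naive ``by symmetry'' slogan could conceal a real computation.
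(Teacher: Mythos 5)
Your proposal is correct and takes essentially the same route as the paper, whose own proof of this theorem is exactly the mirror you describe: the left action $X\bla\rho:=X_{+}\la\rho\ra X_{-}$ and the restriction of ${}_{L}\delta$ on $\Gamma^{co\cR}$, and conversely $\Gamma_{\Lambda}=\Lambda\ot_{A^{op}}\cR$ with the two coactions you wrote and the bimodule action $Y\la(\eta\ot_{A^{op}} X)\ra Z=Y\o\bla\eta\ot_{A^{op}}Y\t XZ$. One small correction: with your formulas the Hopf-bimodule compatibility $(\id\ot_{B}\delta_{R})\circ{}_{L}\delta=({}_{L}\delta\ot_{A}\id)\circ\delta_{R}$ on $\Lambda\ot_{A^{op}}\cR$ reduces to $(\id\ot_{B}\Delta_{R})\circ\Delta_{L}=(\Delta_{L}\ot_{A}\id)\circ\Delta_{R}$ --- the same mixed coassociativity already used in Theorem \ref{thm. category equivalence} --- and not its partner $(\id\ot_{A}\Delta_{L})\circ\Delta_{R}=(\Delta_{R}\ot_{B}\id)\circ\Delta_{L}$ that you cite; since both identities are axioms in Definition \ref{def. full Hopf algebroid2}, this slip does not affect the argument.
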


\begin{proof}
    The proof is indeed the same as the right-handed version above.
Given a Hopf bimodule $\Gamma$, we can define a left-left Yetter-Drinfeld module on $\Gamma^{co\cR}=\{\ \eta\in\Gamma \ |\ \delta_{R}(\eta)=\eta\ot_{A}1\}$
with the left $\cL$-action
\[X\bla\rho:=X_{+}\la\rho\ra X_{-}.\]
And the left $\cL$-coaction is given by the restriction of ${}_{L}\delta$ on the subspace $\Gamma^{co\cR}$,
where the underlying $B$-bimodule structure is $b.\rho.b'=s_{L}(b)\la \rho\ra s_{L}(b')$.

Conversely, given a left-left Yetter-Drinfeld module $\Lambda$, we can construct a Hopf bimodule $\Gamma_{\Lambda}=\Lambda\ot_{A^{op}}\cR$.
The left $\cL$-comodule structure is
\[{}_{L}\delta(\eta\ot_{A^{op}} X)=\eta\mo X\o\ot_{B}\eta\z\ot_{A^{op}} X\t,\]
where the underlying $B$-bimodule structure is $b.(\eta\ot_{A^{op}} X).b'=b.\eta\ot_{A^{op}}X s_{L}(b')$.
The right $\cR$-comodule structure is
\[\delta_{R}(\eta\ot_{A^{op}} X)=\eta\ot_{A^{op}} X\ro\ot_{A}X\rt,\]
    where the underlying $A$-bimodule structure is $a^{.}(\eta\ot_{A^{op}} X)^{.}a'=\eta\ot_{A^{op}}s_{R}(a)Xs_{R}(a')$.

    The $\cL$ bimodule structure is
    \[Y\la(\eta\ot_{A^{op}} X)\ra Z=Y\o\bla\eta\ot_{A^{op}}Y\t XZ.\]
\end{proof}

There is also a useful Corollary:
\begin{cor}\label{cor. left bivariant bimodule}
    Let  $\Gamma$ be a Hopf bimodule   of $(\cL, \cR, S)$, then for any    $\rho\in \Gamma$,  there is $\sum\eta_{i}\ot_{B^{e}} X_{i}\in {}^{co\cL}\Gamma\ot_{B^{e}}\cL$, such that  $\rho=\sum\eta_{i}\ra X_{i}$. Similarly, there is $\sum Y_{i}\ot_{B^{e}}\eta_{i}\in \cL\ot_{B^{e}}\Gamma^{co\cR}$, such that $\rho=\sum Y_{i}\la \eta_{i}$.
\end{cor}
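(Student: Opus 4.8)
The plan is to deduce both decompositions from the two fundamental-theorem isomorphisms already proved in Theorems \ref{thm. category equivalence} and \ref{thm. right invariant of the main theorem}, and then to trade a left action for a right action (and conversely) by means of a single antipode identity. Recall that Theorem \ref{thm. category equivalence} supplies $\Phi^{-1}(X\ot_{B^{op}}\eta)=X\la\eta$, so every $\rho\in\Gamma$ can be written as $\rho=\sum_i X_i\la\eta_i$ with $\eta_i\in{}^{co\cL}\Gamma$ and $X_i\in\cL$; dually, the isomorphism of Theorem \ref{thm. right invariant of the main theorem} gives $\rho=\sum_i\eta_i\ra X_i$ with $\eta_i\in\Gamma^{co\cR}$ and $X_i\in\cR$. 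Thus the only thing left to do is to rewrite a left action on a left-invariant element as a right action on left-invariant elements, and symmetrically for the second statement.

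For the first statement I would establish the conversion identity
\[ X\la\eta=(\eta\bra S^{-1}(X\rt))\ra X\ro,\qquad \eta\in{}^{co\cL}\Gamma,\ X\in\cL. \]
This is obtained by substituting $Z=S^{-1}(X)$ into the more symmetric auxiliary identity $(\eta\bra Z\o)\ra S(Z\t)=S(Z)\la\eta$, which itself follows from the formula $\eta\bra Z=S(Z\o)\la\eta\ra Z\t$ for the right-right Yetter-Drinfeld action (as constructed in Theorem \ref{thm. category equivalence}, using $Z\m\ot Z\p=S(Z\o)\ot Z\t$), the counit axioms, and the antipode relations of Definition \ref{def. full Hopf algebroid1}; passing to the stated form then uses the coalgebra-antipode compatibility $S^{-1}(X)\o\ot_{B}S^{-1}(X)\t=S^{-1}(X\rt)\ot_{B}S^{-1}(X\ro)$. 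Since ${}^{co\cL}\Gamma$ is itself a right-right Yetter-Drinfeld module it is closed under $\bra$, so $\eta\bra S^{-1}(X\rt)\in{}^{co\cL}\Gamma$; feeding the identity into $\rho=\sum_i X_i\la\eta_i$ yields $\rho=\sum_i(\eta_i\bra S^{-1}(X_i\rt))\ra X_i\ro$, which has exactly the required shape $\sum_j\eta'_j\ra X'_j$ with $\eta'_j\in{}^{co\cL}\Gamma$ and $X'_j\in\cL$.

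The second statement is the mirror image. Here I would work with the left-left Yetter-Drinfeld action $X\bla\eta=X\ro\la\eta\ra S(X\rt)$ on $\Gamma^{co\cR}$ (from Theorem \ref{thm. right invariant of the main theorem}, where $X_+\ot X_-=X\ro\ot S(X\rt)$) and prove the symmetric conversion identity
\[ \eta\ra X=X\o\la(S^{-1}(X\t)\bla\eta),\qquad \eta\in\Gamma^{co\cR},\ X\in\cR, \]
derived in the same manner from $S(Z\rt)\la(Z\ro\bla\eta)=\eta\ra S(Z)$ with $Z=S^{-1}(X)$, this time invoking $S^{-1}(X)\ro\ot_{A}S^{-1}(X)\rt=S^{-1}(X\t)\ot_{A}S^{-1}(X\o)$. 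As $\Gamma^{co\cR}$ is closed under $\bla$, substituting into $\rho=\sum_i\eta_i\ra X_i$ gives $\rho=\sum_i X_i\o\la(S^{-1}(X_i\t)\bla\eta_i)$, i.e. the desired form $\sum_j Y_j\la\eta'_j$ with $Y_j\in\cL$ and $\eta'_j\in\Gamma^{co\cR}$.

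The reduction via the two theorems is routine; the main obstacle is verifying the two conversion identities at the level of Hopf algebroids rather than Hopf algebras. Unlike the classical case, each step must be checked to respect the various balanced (Takeuchi) tensor products and the source/target identifications $s_{L}(B)=t_{R}(A)$, $t_{L}(B)=s_{R}(A)$ with $A=B^{op}$. In particular one has to confirm that $\eta\bra S^{-1}(X\rt)$ together with the subsequent right action by $X\ro$ descends through $\ot_{A}$, and that the resulting expression is balanced over $B^{e}$, which is precisely where the appendix identities for the (anti-)right Hopf algebroid structure, together with the counit axioms of Definitions \ref{def:left.bgd} and \ref{def:right.bgd}, carry the weight.
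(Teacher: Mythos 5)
Your overall strategy --- reduce to the decompositions $\rho=X\la\eta$ and $\rho=\eta\ra X$ supplied by Theorems \ref{thm. category equivalence} and \ref{thm. right invariant of the main theorem}, then convert a left action on an invariant element into a right action (and vice versa) by an antipode identity --- is exactly the paper's. The gap is in the conversion identities themselves: in both of them you have interchanged the two coproducts, and for a genuine Hopf algebroid (unlike a Hopf algebra, where $\Delta_L=\Delta_R$ and the mistake is invisible) this is fatal. Concretely, your auxiliary identity $(\eta\bra Z\o)\ra S(Z\t)=S(Z)\la\eta$ cannot hold: expanding $\eta\bra W=S(W\o)\la\eta\ra W\t$ and using coassociativity, its left-hand side becomes, formally, $S(Z\o)\la\eta\ra\bigl(Z\t S(Z\th)\bigr)$, so one would need $Y\o S(Y\t)$ to collapse to a source/target of a counit. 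That is precisely one of the combinations that is \emph{not} available for Hopf algebroids: the only valid cancellations are $S(Y\o)Y\t=s_R(\varepsilon_R(Y))$, $S^{-1}(Y\t)Y\o=t_R(\varepsilon_R(Y))$, $Y\ro S(Y\rt)=s_L(\varepsilon_L(Y))$ and $Y\rt S^{-1}(Y\ro)=t_L(\varepsilon_L(Y))$ (these are (\ref{equ. inverse hlamda 8}), (\ref{equ. inverse hmu 8}), (\ref{equ. inverse lamda 8}), (\ref{equ. inverse mu 8}) rewritten through the antipode). Indeed $m\circ(\id\ot S)\circ\Delta_L$ is not even well defined, since the balancing $t_L(b)U\ot_B V=U\ot_B s_L(b)V$ would force $S(s_L(b))=t_L(b)$, whereas $S(s_L(b))=S^{2}(t_L(b))$. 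The same defect afflicts your final formula $(\eta\bra S^{-1}(X\rt))\ra X\ro$: the assignment $(U,V)\mapsto(\eta\bra S^{-1}(V))\ra U$ does not descend to the tensor product over which $\Delta_R$ is defined (the check produces $S^{-1}(t_R(a))=S^{-2}(s_R(a))$, which is not a source or target map), so the expression is not well posed, let alone equal to $X\la\eta$. Your second conversion identity has the mirror-image problem.

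The correct identities are the ones with the coproducts the other way around, and they are what the paper proves. The auxiliary identity holds with the \emph{right} coproduct, $(\eta\bra Z\ro)\ra S(Z\rt)=S(Z)\la\eta$: after the mixed coassociativity $Z\ro\o\ot Z\ro\t\ot Z\rt=Z\o\ot Z\t\ro\ot Z\t\rt$ the needed cancellation is $Z\t\ro S(Z\t\rt)=s_L(\varepsilon_L(Z\t))$, which is valid, and $s_L(\varepsilon_L(Z\t))$ passes through the left-invariant $\eta$ because $s_L(B)=t_R(A)$. Substituting $Z=S^{-1}(X)$ and using $\Delta_R(S^{-1}(X))=S^{-1}(X\t)\ot_A S^{-1}(X\o)$ then gives
\[X\la\eta=(\eta\bra S^{-1}(X\t))\ra X\o=(\eta\bra X\nm)\ra X\np,\]
which is the paper's formula, built on the anti-right Hopf translation map $X\np\ot^{A^{op}}X\nm=X\o\ot^{A^{op}}S^{-1}(X\t)$; its tensor product $\ot^{A^{op}}$, balancing $s_R(a)U\ot^{A^{op}}V=U\ot^{A^{op}}Vs_R(a)$, is exactly what makes the expression well defined, via $\eta'\bra s_R(a)=\eta'\ra s_R(a)$. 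Symmetrically, the second decomposition must use $\eta\ra X=X_{[+]}\la(X_{[-]}\bla\eta)=X\rt\la\bigl(S^{-1}(X\ro)\bla\eta\bigr)$, i.e.\ the right coproduct of $X$, not the left one you wrote. With these two corrections the rest of your argument (closure of ${}^{co\cL}\Gamma$ and $\Gamma^{co\cR}$ under the Yetter--Drinfeld actions, then substitution into the decompositions from the two theorems) goes through and coincides with the paper's proof.
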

\begin{proof}
  For any $\eta\in {}^{co\cL}\Gamma$ and $X\in \cL$,  we can show
    \begin{align*}
        X\la \eta=&S(S^{-1}(X\t)\o)\la\eta\ra(S^{-1}(X\t)\t X\o)
        =(\eta\bra X\nm)\ra X\np.
    \end{align*}
    By Theorem \ref{thm. category equivalence}, any element $\rho\in \Gamma$ can be given by $X\la\eta$ for some $\eta\in {}^{co\cL}\Gamma$ and $X\in \cL$, so we have the result. Similarly,
    we have
    \[\eta\ra X=X_{[+]}\la(X_{[-]}\bla\eta),\]
    for any $\eta\in {}_{\Gamma}\Lambda$.
\end{proof}

For anti-Hopf bimodules, we have
\begin{thm}\label{thm. category equivalence1}
    Let $(\cL, \cR, S)$ be a Hopf algebroid, there is a one-to-one correspondence between the anti-Hopf bimodules of $(\cL, \cR, S)$ and the left-right Yetter-Drinfeld modules of $\cR$ (resp. right-left Yetter-Drinfeld modules of $\cL$). Moreover, there is a one-to-one correspondence between the full Hopf bimodules of $(\cL, \cR, S)$ and the full left-right Yetter-Drinfeld modules of $(\cL, \cR, S)$ (resp. full right-left Yetter-Drinfeld modules of $(\cL, \cR, S)$).
\end{thm}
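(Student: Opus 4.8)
The plan is to build this result by direct analogy with the proof of Theorem \ref{thm. category equivalence}, exploiting the symmetry between the two Hopf-structure data of $(\cL, \cR, S)$. Recall from Definition \ref{def. bicovariant1} that an anti-Hopf bimodule $\Gamma$ carries a left $\cR$-covariant comodule structure ${}_{R}\delta$, a right $\cL$-covariant comodule structure $\delta_{L}$, and an $H$-bimodule structure satisfying the compatibility $(\id\ot_{A}\delta_{L})\circ{}_{R}\delta=({}_{R}\delta\ot_{B}\id)\circ\delta_{L}$. The key observation is that this is exactly the structure one obtains from the Hopf bimodule definition after swapping the roles of $\cL$ and $\cR$ (equivalently, passing to the antipode-twisted coactions). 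So the strategy is to transcribe the two functors $\Gamma\mapsto{}^{co\cL}\Gamma$ and $\Lambda\mapsto\cL\ot_{B^{op}}\Lambda$ from Theorem \ref{thm. category equivalence} into their ``anti'' counterparts.

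Concretely, for the left-right Yetter-Drinfeld correspondence, I would first pass to the invariant subspace $\Gamma^{co\cL}:=\{\eta\in\Gamma\mid\delta_{L}(\eta)=\eta\ot_{A}1\}$, the coinvariants of the right $\cL$-comodule structure. On this subspace I would define a right $\cR$-action using the inverse translation map of the anti-right Hopf algebroid structure, namely $\eta\bra X:=X\nm\la\eta\ra X\np$ (the analogue of $X\m\la\eta\ra X\p$ but now using $\hat\mu^{-1}$ from \eqref{X[+][-]1}), and I would take the left $\cR$-coaction to be the restriction of ${}_{R}\delta$. One then checks, exactly as in the theorem above, that $\bra$ is well defined (it lands in $\Gamma^{co\cL}$ and respects the $t_{R}, s_{R}$ identities) and that the pair $({}_{R}\delta,\bra)$ satisfies the left-right Yetter-Drinfeld condition of Definition \ref{def. left-right YD}. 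In the reverse direction I would set $\Gamma_{\Lambda}:=\Lambda\ot_{A^{op}}\cR$ with right $\cL$-coaction and left $\cR$-coaction built from the coproducts, mirroring the $\Lambda\ot_{A^{op}}\cR$ construction already used in the proof of Theorem \ref{thm. right invariant of the main theorem}, and verify it is an anti-Hopf bimodule. The isomorphisms $\Gamma\cong\cR\ot\Lambda$ and $\Lambda\cong\Gamma^{co\cL}$ are then furnished by the anti-analogue of the map $\Phi$, with inverse $X\ot_{A^{op}}\eta\mapsto \eta\la X$ or the appropriate bilinear action. The right-left Yetter-Drinfeld variant is handled identically but using the coinvariants $\Gamma^{co\cR}$, the left $\cL$-module structure $X\bla\rho:=X_{[+]}\la\rho\ra X_{[-]}$ built from $\mu^{-1}$, and the left $\cL$-comodule structure from ${}_{R}\delta$.

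Since the author explicitly writes ``The proof is indeed the same as the right-handed version above,'' the honest and efficient plan is to indicate precisely which structural roles get swapped rather than to reprove every identity. The main obstacle is purely bookkeeping: one must carefully track which of the four translation-type maps ($\lambda^{-1}, \mu^{-1}, \hat\lambda^{-1}, \hat\mu^{-1}$) is the correct one for each action, and which invariant subspace (${}^{co\cL}$, ${}^{co\cR}$, $\Gamma^{co\cL}$, $\Gamma^{co\cR}$) pairs with it, because the anti-Hopf setting interchanges source/target conventions relative to the Hopf-bimodule setting. Once the dictionary $\cL\leftrightarrow\cR$, $\Delta_{L}\leftrightarrow\Delta_{R}$, $s_{L}\leftrightarrow t_{R}$ is fixed, every computation becomes a formal translate of one already carried out, and the identities of the appendix for (anti-)right Hopf algebroids supply the algebraic steps (the analogues of \eqref{equ. source and target map with hlambda inv 5} used above) needed to verify well-definedness and colinearity of $\Phi$.
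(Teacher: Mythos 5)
Your treatment of the left-right case is exactly the paper's proof: the paper takes the same coinvariant space $\Gamma^{co\cL}=\{\eta\in\Gamma\mid\delta_{L}(\eta)=\eta\ot_{B}1\}$, the same right action $\eta\bra X:=X\nm\la\eta\ra X\np$ built from $\hat{\mu}^{-1}$, the restriction of ${}_{R}\delta$ as the coaction, and in the reverse direction $\Gamma_{\Upsilon}=\Upsilon\ot_{A}\cR$ with ${}_{R}\delta(\eta\ot_{A}X)=X\ro\eta\rmo\ot_{A}\eta\rz\ot_{A}X\rt$, $\delta_{L}(\eta\ot_{A}X)=\eta\ot_{A}X\o\ot_{B}X\t$ and $Y\la(\eta\ot_{A}X)\ra Z=\eta\bra Z\ro\ot_{A}YXZ\rt$; like you, the author states these structures and leaves every verification to the analogy with Theorem \ref{thm. category equivalence}.

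The one place your dictionary goes wrong is the right-left variant. By Definition \ref{def. right-left YD}, a right-left Yetter-Drinfeld module of $\cL$ is a \emph{right} $\cL$-comodule together with a left $\cL$-module. An anti-Hopf bimodule carries only ${}_{R}\delta$ (a left $\cR$-coaction) and $\delta_{L}$ (a right $\cL$-coaction), so the coinvariants must be taken with respect to ${}_{R}\delta$, i.e.\ on ${}^{co\cR}\Gamma=\{\eta\in\Gamma\mid{}_{R}\delta(\eta)=1\ot_{A}\eta\}$, and not on ``$\Gamma^{co\cR}$'', which in the paper's notation denotes coinvariants of a right $\cR$-coaction that an anti-Hopf bimodule does not possess. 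Your action $X\bla\rho:=X_{[+]}\la\rho\ra X_{[-]}$ is the correct choice (it is the one that preserves ${}^{co\cR}\Gamma$; the $\lambda^{-1}$ version would not), but the comodule structure of the resulting Yetter-Drinfeld module must be the restriction of the \emph{other} coaction $\delta_{L}$. Your phrase ``the left $\cL$-comodule structure from ${}_{R}\delta$'' pairs the module structure with the coaction you have just trivialized --- ${}_{R}\delta$ restricted to its own coinvariants is trivial, and it is an $\cR$-coaction rather than an $\cL$-coaction in any case. (A smaller slip of the same kind: the inverse of the structure isomorphism should be $\eta\ot X\mapsto\eta\ra X$, not ``$\eta\la X$'', which does not typecheck.) With these substitutions your translation scheme reproduces the paper's argument.
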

\begin{proof}
    Let $\Gamma$ be an anti-Hopf bimodule, we can define a left-right Yetter-Drinfeld module of $\cR$ on
    \[\Gamma^{co\cL}:=\{\ \eta\in\Gamma \ |\ \delta_{L}(\eta)= \eta\ot_{B}1\},\]
    with the right action
    \[\eta\bra X:=X\nm\la \eta \ra X\np.\]
   And the left $\cR$-coaction is given by the restriction of ${}_{R}\delta$ on the subspace ${}_{\Gamma}\Upsilon$.

Conversely, given a left-right Yetter-Drinfeld module $\Upsilon$, we can construct an  anti-Hopf bimodule $\Gamma_{\Upsilon}=\Upsilon\ot_{A}\cR$.
The left $\cR$ comodule structure is
\[{}_{R}\delta(\eta\ot_{A} X)= X\ro\eta\rmo\ot_{A}\eta\rz\ot_{A} X\rt.\]
The right $\cL$ comodule structure is
\[\delta_{L}(\eta\ot_{A} X)=\eta\ot_{A}X\o\ot_{B}X\t.\]
The $\cL$ (or $\cR$) bimodule structure is
    \[Y\la(\eta\ot_{A} X)\ra Z=\eta\bra Z\ro\ot_{A}Y X Z\rt.\]
\end{proof}

\section{(Pre-)braided monoidal category of Yetter-Drinfeld modules and Hopf bimodules}\label{sec4}

\subsection{(Pre-)braided monoidal category of  Hopf bimodules}

Let $\Gamma^{1}, \cdots, \Gamma^{n}$ be Hopf bimodules of a Hopf algebroid $(\cL, \cR, S)$, then $\Gamma^{1}\ot_{H}\cdots\ot_{H}\Gamma^{n}$ is also a Hopf bimodule of $(\cL, \cR, S)$,  where $H$ is the underlying algebra structure of $\cL$ and $\cR$ and $\ot_{H}$ is the balanced tensor product over $H$, namely, $\rho^{1}\ra X\ot_{H}\rho^{2}=\rho^{1}\ot_{H} X\la \rho^{2}$ for any $\rho^{i}\in \Gamma^{i}$. The $\cL$-bimodule structure is given by
\[X\la(\rho^{1}\ot_{H}\cdots\ot_{H}\rho^{n})\ra Y=(X\la\rho^{1})\ot_{H}\cdots\ot_{H}(\rho^{n}\ra Y).\]
The left $\cL$ -comodule structure is given by
\[{}_{L}\delta(\rho^{1}\ot_{H}\cdots\ot_{H}\rho^{n})=\rho^{1}\mo\cdots\rho^{n}\mo\ot_{B}(\rho^{1}\z\ot_{H}\cdots\ot_{H}\rho^{n}\z),\]
with the underlying $B$-bimodule structure
\[b.(\rho^{1}\ot_{H}\cdots\ot_{H}\rho^{n}).b'=(s_{L}(b)\la\rho^{1})\ot_{H}\cdots\ot_{H}(\rho^{n}\ra s_{L}(b')).\]
The right $\cR$-comodule structure is given by
\[\delta_{R}(\rho^{1}\ot_{H}\cdots\ot_{H}\rho^{n})=(\rho^{1}\rz\ot_{H}\cdots\ot_{H}\rho^{n}\rz)\ot_{A}\rho^{1}\ro\cdots\rho^{n}\ro,\]
with the underlying $A$-bimodule structure
\[a^{.}(\rho^{1}\ot_{H}\cdots\ot_{H}\rho^{n})^{.}a'=(s_{R}(a)\la\rho^{1})\ot_{H}\cdots\ot_{H}(\rho^{n}\ra s_{R}(a')).\]

It is not hard to see $\Gamma^{1}\ot_{H}\cdots\ot_{H}\Gamma^{n}$ is a Hopf bimodule of $(\cL, \cR, S)$ with the given structure above. The monoidal category of Hopf bimodule of $(\cL, \cR, S)$ is denoted by $({}_{H}^{\cL}\mathcal{M}_{H}^{\cR}, \ot_{H})$.  Moreover, the category of full Hopf bimodules $({}_{H}^{\cH}\mathcal{M}_{H}^{\cH}, \ot_{H})$ is also a monoidal category with the right diagonal coaction of $\cL$ and left diagonal coaction of $\cR$.

\begin{thm}\label{thm. braided Hopf bimodule category}
    Let  $(\cL, \cR, S)$ be a Hopf algebroid, then $({}_{H}^{\cL}\CM_{H}^{\cR}, \ot_{H})$ is a pre-braided monoidal category, with braiding $\sigma: \Gamma^{1}\ot_{H}\Gamma^{2}\to \Gamma^{2}\ot_{H}\Gamma^{1}$ given by
    \[\sigma(\rho^{1}\ot_{H}\rho^{2})=\rho^{1}\mo{}_{+}\la\rho^{2}\rz\ra \rho^{2}\ro\m\ot_{H}\rho^{1}\mo{}_{-}\la\rho^{1}\z\ra\rho^{2}\ro\p,\]
    for all $\rho^{1}\in \Gamma^{1}, \rho^{2}\in\Gamma^{2}$.
    Moreover, the category of full Hopf bimodules $({}^{\cH}_{H}\mathcal{M}{}_{H}^{\cH}, \ot_{H})$ is a braided monoidal category,  with the  inverse of the braiding  given by
    \[\sigma^{-1}(\rho^{2}\ot_{H}\rho^{1})=S^{-1}(\rho^{2}\o)\m\la\rho^{1}\rz\ra S^{-1}(\rho^{1}\rmo)_{+}\ot_{H}S^{-1}(\rho^{2}\o)\p\la \rho^{2}\z\ra S^{-1}(\rho^{1}\rmo)_{-}.\]
\end{thm}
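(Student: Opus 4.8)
The plan is to verify the braiding axioms directly from the explicit formula, organised as: (i) $\sigma$ is a well-defined map $\Gamma^{1}\ot_{H}\Gamma^{2}\to\Gamma^{2}\ot_{H}\Gamma^{1}$; (ii) each $\sigma$ is a morphism of Hopf bimodules and the family is natural; (iii) the two hexagon identities hold, giving the pre-braiding; and, for the full case, (iv) $\sigma$ is invertible with the stated inverse. Throughout I would use the covariance conditions of Definitions \ref{def. left covariant} and \ref{def. right covariant}, the Hopf-bimodule compatibility $(\id\ot_{B}\delta_{R})\circ{}_{L}\delta=({}_{L}\delta\ot_{A}\id)\circ\delta_{R}$, and the identities for $X_{+},X_{-}$ and $X\m,X\p$ collected in the appendix.

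For (i), write the left $\cL$-coaction as $\rho^{1}\mo\ot_{B}\rho^{1}\z$ and the right $\cR$-coaction as $\rho^{2}\rz\ot_{A}\rho^{2}\ro$. First I would check that $\rho^{1}\mo{}_{+}\la\rho^{2}\rz\ra\rho^{2}\ro\m$ lies in $\Gamma^{2}$ and that $\rho^{1}\mo{}_{-}\la\rho^{1}\z\ra\rho^{2}\ro\p$ lies in $\Gamma^{1}$, that the formula descends across the $\ot_{H}$-balancing on the source, and that the two output legs are balanced over $H$. The crucial inputs are the base-algebra balancings built into $X_{+}\ot_{B^{op}}X_{-}$ and $X\m\ot_{A^{op}}X\p$, together with covariance, which let the actions of $\rho^{2}\ro\m,\rho^{2}\ro\p$ and of $\rho^{1}\mo{}_{\pm}$ recombine compatibly with the coactions.

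For (ii), I would show $\sigma$ is $H$-bilinear and left $\cL$- and right $\cR$-colinear for the diagonal action and codiagonal coactions on $\ot_{H}$; naturality then follows because $\sigma$ is assembled solely from the structure maps, which any Hopf-bimodule morphism intertwines. Conceptually all of this is most transparent through the equivalence of Theorem \ref{thm. category equivalence}: under $\Gamma\mapsto{}^{co\cL}\Gamma$ the map $\sigma$ corresponds to the standard right-right Yetter-Drinfeld pre-braiding, of the schematic form $\eta^{1}\ot\eta^{2}\mapsto\eta^{2}\rz\ot\eta^{1}\bra\eta^{2}\ro$, and pulling back along the equivalence recovers the stated $\sigma$. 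Making this transport rigorous requires first checking the equivalence is monoidal, i.e. that $\cL\ot_{B^{op}}(\Lambda^{1}\ot_{A}\Lambda^{2})\cong(\cL\ot_{B^{op}}\Lambda^{1})\ot_{H}(\cL\ot_{B^{op}}\Lambda^{2})$ as Hopf bimodules.

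The main obstacles are (iii) and (iv). For the hexagon relations $\sigma_{\Gamma^{1}\ot_{H}\Gamma^{2},\Gamma^{3}}=(\sigma_{\Gamma^{1},\Gamma^{3}}\ot\id)(\id\ot\sigma_{\Gamma^{2},\Gamma^{3}})$ and its mirror, the computation rests on coassociativity and multiplicativity of the two coactions together with the cocycle identities for $\lambda^{-1}$ and $\hat\lambda^{-1}$ in the appendix; reducing to the Yetter-Drinfeld side, where the hexagons are the familiar YD identities, is the cleanest way to control the bookkeeping. For invertibility in the full case, the inverse formula uses the right $\cL$-coaction $\rho^{2}\z\ot_{A}\rho^{2}\o$ and the left $\cR$-coaction $\rho^{1}\rmo\ot_{A}\rho^{1}\rz$, which exist precisely because $\Gamma$ is a \emph{full} Hopf bimodule (Proposition \ref{prop. prop of full Hopf bimodule}); this is what makes $S^{-1}(\rho^{2}\o)$ and $S^{-1}(\rho^{1}\rmo)$ meaningful. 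I would then verify $\sigma\circ\sigma^{-1}=\id=\sigma^{-1}\circ\sigma$ by inserting the antipode axioms of Definitions \ref{def. full Hopf algebroid1} and \ref{def. full Hopf algebroid3} and the relations between $S^{\pm}$ and the four coactions, the bijectivity of $S$ being exactly what promotes the pre-braiding to a braiding.
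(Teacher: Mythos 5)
Your plan is sound and, at its core, it follows the paper's own strategy: direct verification that $\sigma$ descends through the balanced tensor products and is $H$-bilinear and colinear, followed by a reduction of the hard identities to coinvariant elements, where $\sigma$ collapses to the Yetter--Drinfeld braiding $\eta^{1}\ot_{H}\eta^{2}\mapsto\eta^{2}\rz\ot_{H}\eta^{1}\bra\eta^{2}\ro$. The difference is in the scaffolding used for that reduction. You propose transporting the braiding through the monoidal equivalence of Theorem \ref{thm. category equivalence}, which (as you correctly note) forces you to first prove monoidality of the equivalence --- material the paper defers to Theorem \ref{thm. equivalence of monoidal category}, so your route reorders the logical development. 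The paper instead gets away with much less: it invokes only the generation statement (Corollary \ref{cor. left bivariant bimodule}, i.e.\ coinvariants generate each Hopf bimodule over $H$) together with $H$-bilinearity of $\sigma$, so that the braid relation --- and likewise $\sigma\circ\sigma^{-1}=\id=\sigma^{-1}\circ\sigma$ in the full case --- need only be checked on coinvariant elements, where both sides reduce to short computations; no monoidal-functor machinery is needed. Your transport route buys conceptual clarity (the braiding literally \emph{is} the YD braiding seen through the equivalence) at the cost of front-loading the monoidality check; the paper's route buys logical economy. Two further points. First, you state the hexagon identities as the goal, whereas the paper only verifies the Yang--Baxter relation on coinvariant triples, which is strictly weaker than the hexagons defining a (pre-)braiding; carrying out your plan would require the hexagons, and the same coinvariant reduction (plus naturality and bilinearity) handles them, so your formulation is if anything the more complete one. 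Second, for invertibility in the full case you should, as the paper does, also reduce to ${}^{co\cL}\Gamma^{1}\ot_{H}{}^{co\cL}\Gamma^{2}$, where $\sigma^{-1}(\rho^{2}\ot_{H}\rho^{1})=\rho^{1}\bra S^{-1}(\rho^{2}\o)\ot_{H}\rho^{2}\z$; a direct verification of $\sigma\circ\sigma^{-1}=\id$ on general elements by ``inserting the antipode axioms'' into the full four-leg formula would be substantially messier, and the coinvariant reduction is what makes it a two-line check.
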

\begin{proof}
    We first check $\sigma$ factors through $\rho^{1}\mo{}_{+}\ot_{B^{op}}\rho^{1}\mo{}_{-}$:
    \begin{align*}
        (\rho^{1}\mo{}_{+}t_{L}(b))&\la\rho^{2}\rz\ra \rho^{2}\ro\m\ot_{H}\rho^{1}\mo{}_{-}\la\rho^{1}\z\ra\rho^{2}\ro\p\\
        =&(\rho^{1}\mo{}_{+}s_{R}(b))\la\rho^{2}\rz\ra \rho^{2}\ro\m\ot_{H}\rho^{1}\mo{}_{-}\la\rho^{1}\z\ra\rho^{2}\ro\p\\
        =&\rho^{1}\mo{}_{+}\la\rho^{2}\rz\ra (t_{R}(b)\rho^{2}\ro)\m\ot_{H}\rho^{1}\mo{}_{-}\la\rho^{1}\z\ra(t_{R}(b)\rho^{2}\ro)\p\\
        =&\rho^{1}\mo{}_{+}\la\rho^{2}\rz\ra (\rho^{2}\ro\m s_{R}(b)) \ot_{H}\rho^{1}\mo{}_{-}\la\rho^{1}\z\ra\rho^{2}\ro\p\\
        =&\rho^{1}\mo{}_{+}\la\rho^{2}\rz\ra \rho^{2}\ro\m \ot_{H}(t_{L}(b)\rho^{1}\mo{}_{-})\la\rho^{1}\z\ra\rho^{2}\ro\p.
    \end{align*}
    Second, we  check $\sigma$ factors through
    $\rho^{1}\ot_{H}\rho^{2}$:
    \begin{align*}        \sigma(\rho^{1}\ra X\ot_{H}\rho^{2})=&(\rho^{1}\mo{}_{+}X\o{}_{+})\la\rho^{2}\rz\ra \rho^{2}\ro\m\ot_{H}(X\o{}_{-}\rho^{1}\mo{}_{-})\la\rho^{1}\z\ra(X\t\rho^{2}\ro\p)\\
    =&(\rho^{1}\mo{}_{+}X\ro)\la\rho^{2}\rz\ra (\rho^{2}\ro\m X\rt\m)\ot_{H}\rho^{1}\mo{}_{-}\la\rho^{1}\z\ra(X\rt\p\rho^{2}\ro\p)\\
    =&\sigma(\rho^{1}\ot_{H}X\la\rho^{2}),
    \end{align*}
    where the 2nd step uses
    \[Z\o{}_{+}\ot_{B^{op}}Z\o{}_{-}\ot_{A^{op}}Z\t=Z\ro\ot_{B^{op}}Z\rt\m\ot_{A^{op}}Z\rt\p,\]
for any $Z\in \cL$. It is a straightforward computation to check $\sigma$ factors through all the rest balanced tensor products.
    One can easily check $\sigma$ is $\cL$-bilinear. To see it is left $\cL$-colinear, we can first see it is $B$-bilinear, namely, $b.\sigma(\rho^{1}\ot_{H}\rho^{2}).b'=\sigma(b.(\rho^{1}\ot_{H}\rho^{2}).b')$.
    And
    \begin{align*}
        {}_{L}\delta\circ\sigma(\rho^{1}\ot_{H}\rho^{2})=&\rho^{1}\mo{}_{+}\o\rho^{2}\rz\mo\rho^{2}\ro\m\o\rho^{2}\ro\p\o\ot_{B}\rho^{1}\mo{}_{+}\t\la \rho^{2}\rz\z\ra \rho^{2}\ro\m\t\\
        \ot_{H}\rho^{1}&\mo{}_{-}\la\rho^{1}\z\ra\rho^{2}\ro\p\t\\
        =&\rho^{1}\mo{}_{+}\o\rho^{2}\rz\mo\ot_{B}\rho^{1}\mo{}_{+}\t\la \rho^{2}\rz\z\ra \rho^{2}\ro\m
        \ot_{H}\rho^{1}\mo{}_{-}\la\rho^{1}\z\ra\rho^{2}\ro\p\\
        =&\rho^{1}\mo\o\rho^{2}\mo\ot_{B}\rho^{1}\mo\t{}_{+}\la \rho^{2}\z\rz\ra \rho^{2}\z\ro\m
        \ot_{H}\rho^{1}\mo\t{}_{-}\la\rho^{1}\z\ra\rho^{2}\z\ro\p\\
        =&\rho^{1}\mt\rho^{2}\mo\ot_{B}\rho^{1}\mo{}_{+}\la \rho^{2}\z\rz\ra \rho^{2}\z\ro\m
        \ot_{H}\rho^{1}\mo{}_{-}\la\rho^{1}\z\ra\rho^{2}\z\ro\p\\
        =&(\id\ot_{B}\sigma)\circ{}_{L}\delta(\rho^{1}\ot_{H}\rho^{2}),
    \end{align*}
where the 1st step uses $\rho\mo{}_{+}\ot_{B^{op}}\rho\mo{}_{-}\la\rho\z\in \cL\ot_{B^{op}}{}^{co\cL}\Gamma$, and the 2nd step uses
\[X\m\o X\p\o\ot_{B}X\m\t\ot_{A^{op}}X\p\t=1\ot_{B}X\m\ot_{A^{op}}X\p,\]
for any $X\in\cL$.
Similarly, $\sigma$ is also a right $\cR$ comodule map. 
Before we check the braided relation, we first observe that
\[\sigma(\eta^{1}\ot_{H}\eta^{2})=\eta^{2}\rz\ot_{H}\eta^{1}\bra\eta^{2}\ro,\]
for $\eta^{1}\ot_{H}\eta^{2}\in {}^{co\cL}\Gamma^{1}\ot_{H}{}^{co\cL}\Gamma^{2}$. Similarly, for $\eta^{1}\ot_{H}\eta^{2}\in\Gamma^{1co\cR}\ot_{H}\Gamma^{2co\cR}$.
\[\sigma(\eta^{1}\ot_{H}\eta^{2})=\eta^{1}\mo\bla \eta^{2}\ot_{H} \eta^{1}\z.\]

By Theorem \ref{thm. category equivalence}, \ref{thm. right invariant of the main theorem} and Corollary \ref{cor. left bivariant bimodule}, let $\Gamma^{1}, \Gamma^{2}, \Gamma^{3}$ be three Hopf bimodules, we know
 ${}^{co\cL}\Gamma^{1}\ot_{H}\Gamma^{2co\cL}\ot_{H}\Gamma^{3co\cL}$ generates $\Gamma^{1}\ot_{H}\Gamma^{2}\ot_{H}\Gamma^{3}$, so it is sufficient to check the braid relation on $\eta^{1}\ot_{H}\eta^{2}\ot_{H}\eta^{3}\in {}^{co\cL}\Gamma^{1}\ot_{H}\Gamma^{2co\cL}\ot_{H}\Gamma^{3co\cL}$. On the one hand,
 \begin{align*}
     (\sigma\ot_{H}\id)&(\id\ot_{H}\sigma)(\sigma\ot_{H}\id)(\eta^{1}\ot_{H}\eta^{2}\ot_{H}\eta^{3})\\
     =&(\sigma\ot_{H}\id)(\id\ot_{H}\sigma)(\eta^{2}\ot_{H}\eta^{1}\ot_{H}\eta^{3})\\
     =&(\sigma\ot_{H}\id)(\eta^{2}\ot_{H}\eta^{3}\ot_{H}\eta^{1})\\
     =&\eta^{2}\mo\bla\eta^{3}\ot_{H}\eta^{2}\z\ot_{H}\eta^{1}.
 \end{align*}
On the other hand,
\begin{align*}
    (\id\ot_{H}\sigma)&(\sigma\ot_{H}\id)(\id\ot_{H}\sigma)(\eta^{1}\ot_{H}\eta^{2}\ot_{H}\eta^{3})\\
     =&(\id\ot_{H}\sigma)(\sigma\ot_{H}\id)(\eta^{1}\ot_{H}\eta^{2}\mo\bla\eta^{3}\ot_{H}\eta^{2}\z)\\
     =&(\id\ot_{H}\sigma)(\eta^{2}\mo\bla\eta^{3}\ot_{H}\eta^{1}\ot_{H}\eta^{2}\z)\\
     =&\eta^{2}\mo\bla\eta^{3}\ot_{H}\eta^{2}\z\ot_{H}\eta^{1}.
\end{align*}

In the following, we are going to show the category of full Hopf bimodules is a braided monoidal category. We first need to show $\sigma$ is right $\cL$  and left $\cR$-colinear:
\begin{align*}
    \delta_{L}&\circ\sigma(\rho^{1}\ot_{H}\rho^{2})\\
    =&\rho^{1}\mo{}_{+}\o\la\rho^{2}\rz\ra\rho^{2}\ro\m\ot_{H}\rho^{1}\mo{}_{-}\o\la\rho^{1}\z\z\ra\rho^{2}\ro\p\o\ot_{B}\rho^{1}\mo{}_{+}\t\rho^{1}\mo{}_{-}\t\rho^{1}\z\o\rho^{2}\ro\p\t\\
    =&\rho^{1}\mo{}_{+}\la\rho^{2}\rz\ra\rho^{2}\ro\m\ot_{H}\rho^{1}\mo{}_{-}\la\rho^{1}\z\z\ra\rho^{2}\ro\p\o\ot_{B}\rho^{1}\z\o\rho^{2}\ro\p\t\\
    =&\rho^{1}\z\mo{}_{+}\la\rho^{2}\rz\ra\rho^{2}\ro\o\m\ot_{H}\rho^{1}\z\mo{}_{-}\la\rho^{1}\z\z\ra\rho^{2}\ro\o\p\ot_{B}\rho^{1}\o\rho^{2}\ro\t\\
    =&\rho^{1}\z\mo{}_{+}\la\rho^{2}\z\rz\ra\rho^{2}\z\ro\m\ot_{H}\rho^{1}\z\mo{}_{-}\la\rho^{1}\z\z\ra\rho^{2}\z\ro\p\ot_{B}\rho^{1}\o\rho^{2}\o\\
    =&(\sigma\ot_{B}\id)\circ\delta_{L}(\rho^{1}\ot_{H}\rho^{2}),
\end{align*}
where the first step uses
$\rho^{2}\rz\ra\rho^{2}\ro\m\ot_{A^{op}}\rho^{2}\ro\p\in \Gamma^{co\cR}\ot_{A^{op}}\cL$; the second step uses $X_{+}\o\ot_{B^{op}}X_{-}\o\ot_{B}X_{+}\t X_{-}\t=X_{+}\ot_{B^{op}}X_{-}\ot_{B}1$; the 3rd step uses $X\m\ot_{A^{op}}X\p\o\ot_{B}X\p\t=X\o\m\ot_{A^{op}} X\o\p\ot_{B} X\t$. Similarly, we can show $\sigma$ is also $\cR$-colinear. 
So $\sigma$ is a morphism in ${}^{\cH}_{H}\mathcal{M}{}_{H}^{\cH}$. Second, we can see $\sigma^{-1}$ factors through all the balanced tensor products. Here we only check $\sigma^{-1}$ factors through the balanced product $\ot_{H}$. On the one hand,
\begin{align*}
    \sigma^{-1}&(\rho^{2}\ra X\ot_{H}\rho^{1})\\=&(S^{-1}(\rho^{2}\o)\m S^{-1}(X\t)\m)\la\rho^{1}\rz\ra S^{-1}(\rho^{1}\rmo)_{+}\ot_{H}(S^{-1}(X\t)\p S^{-1}(\rho^{2}\o)\p)\la \rho^{2}\z\ra (X\o S^{-1}(\rho^{1}\rmo)_{-}),
\end{align*}
on the other hand,

\begin{align*}
    \sigma^{-1}&(\rho^{2} \ot_{H}X\la\rho^{1})\\
    =&(S^{-1}(\rho^{2}\o)\m X\rt)\la\rho^{1}\rz\ra (S^{-1}(\rho^{1}\rmo)_{+}S^{-1}(X\ro)_{+})\ot_{H}S^{-1}(\rho^{2}\o)\p\la \rho^{2}\z\ra (S^{-1}(X\ro)_{-} S^{-1}(\rho^{1}\rmo)_{-}),
\end{align*}
where we use the fact that
\begin{align*}
   S^{-1}(X\ro)_{+}&\ot_{B^{op}}S^{-1}(X\ro)_{-}\ot^{B^{op}} X\rt\\
   =&S^{-1}(X\ro)\ro\ot_{B^{op}}S(S^{-1}(X\ro)\rt)\ot^{B^{op}}X\rt\\
   =&S^{-1}(X\ro\t)\ot_{B^{op}}X\ro\o\ot^{B^{op}}X\rt\\
   =&S^{-1}(X\t\ro)\ot_{B^{op}}X\o\ot^{B^{op}}X\t\rt\\
   =&S^{-1}(X\t)\p\ot_{B^{op}}X\o\ot^{B^{op}}S^{-1}(X\t)\m.
\end{align*}
It is also not hard to see $\sigma^{-1}$ is $\cL$-bilinear.  
Now, it is sufficient to check $\sigma^{-1}$ is the inverse of $\sigma$ restrict on any element $\rho^{1}\ot_{H}\rho^{2}\in {}^{co\cL}\Gamma^{1}\ot_{H}{}^{co\cL}\Gamma^{2}$. We first observe that
\[\sigma^{-1}(\rho^{2}\ot_{H}\rho^{1})=\rho^{1}\bra S^{-1}(\rho^{2}\o)\ot_{H}\rho^{2}\z.\]
So we have
\begin{align*}
    \sigma\circ\sigma^{-1}(\rho^{2}\ot_{H}\rho^{1})=&\rho^{2}\z\rz\ot_{H}\rho^{1}\bra(S^{-1}(\rho^{2}\o)\rho^{2}\z\ro)\\
    =&\rho^{2}\rz\ot_{H}\rho^{1}\bra(S^{-1}(\rho^{2}\ro\t)\rho^{2}\ro\o)\\
    =&\rho^{2}\ot_{H}\rho^{1},
\end{align*}
we also have
\begin{align*}
    \sigma^{-1}\circ\sigma(\rho^{1}\ot_{H}\rho^{2})=&\rho^{1}\bra(\rho^{2}\ro S^{-1}(\rho^{2}\rz\o))\ot_{H}\rho^{2}\rz\z\\
    =&\rho^{1}\bra(\rho^{2}\o\rt S^{-1}(\rho^{2}\o\ro))\ot_{H}\rho^{2}\z\\
    =&\rho^{1}\ot_{H}\rho^{2}.
\end{align*}

\end{proof}

Similarly, let $\Gamma^{1}, \cdots, \Gamma^{n}$ be anti-Hopf bimodules of a Hopf algebroid $(\cL, \cR, S)$, if we define the balance tensor product $\ot^{H}$ induced by opposite action, i.e. $X\la \rho^{1}\ot^{H} \rho^{2}=\rho^{1}\ot^{H} \rho^{2}\ra X$, then $\Gamma^{1}\ot^{H}\cdots\ot^{H}\Gamma^{n}$ is also an anti-Hopf bimodule of $(\cL, \cR, S)$, where $H$ is the underlying algebra structure of $\cL$ and $\cR$. The $\cL$-bimodule structure is given by
\[X\la(\rho^{1}\ot^{H}\cdots\ot^{H}\rho^{n})\ra Y=(\rho^{1}\ra Y)\ot^{H}\cdots\ot^{H}(X\la\rho^{n}).\]
The left $\cR$-comodule structure is given by
\[{}_{R}\delta(\rho^{1}\ot^{H}\cdots\ot^{H}\rho^{n})=\rho^{n}\rmo\cdots\rho^{1}\rmo\ot_{A}(\rho^{1}\rz\ot^{H}\cdots\ot^{H}\rho^{n}\rz).\]
The right $\cL$-comodule structure is given by
\[\delta_{L}(\rho^{1}\ot^{H}\cdots\ot^{H}\rho^{n})=(\rho^{1}\z\ot^{H}\cdots\ot^{H}\rho^{n}\z)\ot_{B}\rho^{n}\o\cdots\rho^{1}\o.\]
It is not hard to see $\Gamma^{1}\ot^{H}\cdots\ot^{H}\Gamma^{n}$ is an anti-Hopf bimodule of $(\cL, \cR, S)$ with the given structure above. The monoidal category of anti-Hopf bimodule of $(\cL, \cR, S)$ is denoted by $({}_{H}^{\cR}\CM_{H}^{\cL}, \ot^{H})$.  Moreover, the reverse monoidal category of $({}^{\cH}_{H}\mathcal{M}{}_{H}^{\cH}, \ot_{H})$
 denoted by $({}^{\cH}_{H}\mathcal{M}{}_{H}^{\cH}, \ot^{H})$ is also a monoidal category with the opposite left diagonal coaction of $\cL$ and opposite right diagonal coaction of $\cR$. More precisely,
the left $\cL$  comodule structure is given by
\[{}_{L}\delta(\rho^{1}\ot^{H}\cdots\ot^{H}\rho^{n})=\rho^{n}\mo\cdots\rho^{1}\mo\ot_{B}(\rho^{1}\z\ot^{H}\cdots\ot^{H}\rho^{n}\z),\]
the right $\cR$ comodule structure is given by
\[\delta_{R}(\rho^{1}\ot^{H}\cdots\ot^{H}\rho^{n})=(\rho^{1}\rz\ot^{H}\cdots\ot^{H}\rho^{n}\rz)\ot_{A}\rho^{n}\ro\cdots\rho^{1}\ro.\]

\begin{thm}\label{thm. braided Hopf bimodule category1}
    Let  $(\cL, \cR, S)$ be a Hopf algebroid, then $({}_{H}^{\cR}\mathcal{M}_{H}^{\cL}, \ot^{H})$ is a pre-braided monoidal category, with braiding $\tilde{\sigma}: \Gamma^{1}\ot^{H}\Gamma^{2}\to \Gamma^{2}\ot^{H}\Gamma^{1}$ given by
    \[\tilde{\sigma}(\rho^{1}\ot^{H}\rho^{2})=\rho^{2}\o{}_{[-]}\la\rho^{2}\z\ra \rho^{1}\rmo\np\ot^{H}\rho^{2}\o{}_{[+]}\la\rho^{1}\rz\ra\rho^{1}\rmo\nm,\]
    for all $\rho^{1}\in \Gamma^{1}, \rho^{2}\in\Gamma^{2}$.
    Moreover,  $({}^{\cH}_{H}\mathcal{M}{}_{H}^{\cH}, \ot^{H})$ is a braided monoidal category,  with the  inverse of the braiding given by
    \[\tilde{\sigma}^{-1}(\rho^{2}\ot^{H}\rho^{1})=S(\rho^{1}\mo)\np\la\rho^{1}\z\ra S(\rho^{2}\ro)_{[-]}\ot^{H}S(\rho^{1}\mo)\nm\la \rho^{2}\rz\ra S(\rho^{2}\ro)_{[+]}.\]
\end{thm}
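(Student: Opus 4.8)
The plan is to mirror the proof of Theorem \ref{thm. braided Hopf bimodule category} step by step, replacing the (left/right) Hopf structure maps $X_{+}\ot_{B^{op}}X_{-}$ and $X\m\ot_{A^{op}}X\p$ together with the coactions ${}_{L}\delta,\delta_{R}$ by their anti counterparts $X_{[-]}\ot^{B^{op}}X_{[+]}$ and $X\np\ot^{A^{op}}X\nm$ together with the coactions ${}_{R}\delta,\delta_{L}$ that govern an anti-Hopf bimodule. Concretely, for the pre-braided statement I would first check that $\tilde{\sigma}$ descends to the stated balanced tensor products: that it factors through $\rho^{2}\o{}_{[-]}\ot^{B^{op}}\rho^{2}\o{}_{[+]}$ and through $\rho^{1}\rmo\np\ot^{A^{op}}\rho^{1}\rmo\nm$, and that it respects $\ot^{H}$ via the opposite balancing $X\la\rho^{1}\ot^{H}\rho^{2}=\rho^{1}\ot^{H}\rho^{2}\ra X$, exactly as in the four factoring computations of the earlier proof. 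Next I would verify that $\tilde{\sigma}$ is $\cL$-bilinear and that it is right $\cL$-colinear for $\delta_{L}$ and left $\cR$-colinear for ${}_{R}\delta$; these are the direct mirrors of the ${}_{L}\delta$- and $\delta_{R}$-colinearity checks, and they rest on the appendix identities for anti-(right) Hopf algebroids.

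For the braid relation I would avoid computing on a general triple tensor product. Using the correspondence of Theorem \ref{thm. category equivalence1} together with the anti-Hopf analogue of Corollary \ref{cor. left bivariant bimodule}, the triple product $\Gamma^{1}\ot^{H}\Gamma^{2}\ot^{H}\Gamma^{3}$ is generated over $H$ by elements whose entries lie in the coinvariant subspaces $\Gamma^{co\cL}$ (and the companion coinvariant subspace), on which $\tilde{\sigma}$ collapses to a simple expression written entirely through the left-right Yetter--Drinfeld action $\eta\bra X=X\nm\la\eta\ra X\np$ and the coaction ${}_{R}\delta$. On such generators the two sides of the braid relation $(\tilde{\sigma}\ot^{H}\id)(\id\ot^{H}\tilde{\sigma})(\tilde{\sigma}\ot^{H}\id)=(\id\ot^{H}\tilde{\sigma})(\tilde{\sigma}\ot^{H}\id)(\id\ot^{H}\tilde{\sigma})$ reduce to the Yetter--Drinfeld compatibility and agree, exactly as in the previous theorem; bilinearity and colinearity then propagate the identity to all of $\Gamma^{1}\ot^{H}\Gamma^{2}\ot^{H}\Gamma^{3}$.

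For the full case I would extend the pre-braided argument. First, using Proposition \ref{prop. prop of full Hopf bimodule} (which supplies all four coactions on a full bimodule), I would show $\tilde{\sigma}$ is in addition left $\cL$-colinear and right $\cR$-colinear, so that it is a morphism in ${}^{\cH}_{H}\mathcal{M}{}_{H}^{\cH}$; these are the mirrors of the $\delta_{L}$- and $\delta_{R}$-colinearity checks for $\sigma$. Then I would verify that the stated $\tilde{\sigma}^{-1}$ is a genuine two-sided inverse: first that it factors through the balanced products, the crucial auxiliary identity being the anti analogue of the $S^{-1}(X\ro)_{+}\ot\cdots$ relation used before, and then, reducing to the coinvariant generators, that both composites $\tilde{\sigma}\circ\tilde{\sigma}^{-1}$ and $\tilde{\sigma}^{-1}\circ\tilde{\sigma}$ telescope to the identity using $S\circ t_{L}=s_{L}$ and the antipode/counit axioms. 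A conceptual cross-check is that $({}^{\cH}_{H}\mathcal{M}{}_{H}^{\cH},\ot^{H})$ is by construction the reverse monoidal category of the braided $({}^{\cH}_{H}\mathcal{M}{}_{H}^{\cH},\ot_{H})$, and the reverse of a braided monoidal category is again braided with an invertible braiding; this guarantees in advance that a braiding of the asserted type exists, so the computation only needs to confirm the explicit formula.

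The main obstacle I anticipate is bookkeeping rather than anything conceptual: the colinearity of $\tilde{\sigma}$ and the inverse verification require the new appendix identities for anti-right Hopf algebroids (those intertwining $X\np,X\nm$ with $\Delta_{R}$ and $S^{-1}$), and the opposite balancing of $\ot^{H}$ must be tracked carefully so that every intermediate expression remains well defined across the balanced tensor products. Once the simplification of $\tilde{\sigma}$ on coinvariants is in hand, each remaining step is the formal mirror of the corresponding step in Theorem \ref{thm. braided Hopf bimodule category}.
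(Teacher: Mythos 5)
Your proposal is correct and matches what the paper intends: the paper in fact states Theorem \ref{thm. braided Hopf bimodule category1} without giving any proof, leaving it as the evident mirror of Theorem \ref{thm. braided Hopf bimodule category}, and your step-by-step dualization (factoring through the balanced products, checking bilinearity and the two colinearities, reducing the braid relation to coinvariants via the anti-Hopf analogue of Corollary \ref{cor. left bivariant bimodule}, then adding the remaining colinearities and the inverse check for the full category) is exactly that mirror argument. Your reverse-monoidal-category cross-check is also consistent with the paper's later observation that $\sigma=\textrm{flip}\circ\tilde{\sigma}^{-1}$ on full Hopf bimodules.
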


\subsection{(Pre-)braided monoidal category of Yetter-Drinfeld modules}
Given a right bialgebroid $\cR$, it is not hard to see that the category $\mathcal{YD}^{\cR}_{\cR}$
 is monoidal with the balanced tensor product $\ot_{A}$ and diagonal (co)action. More precisely, given two Yetter-Drinfeld modules $\Lambda^{1}, \Lambda^{2}\in \mathcal{YD}^{\cR}_{\cR}$, the balanced tensor product is inducted by the $A$-bimodule structure:  $\eta^{1}\ot_{A}\eta^{2}\bra t_{R}(a)=\eta^{1}\bra s_{R}(a)\ot_{A}\eta^{2}$ for any $a\in A$, $\eta^{1}\in \Lambda^{1}$ and $\eta^{2}\in \Lambda^{2}$. The action is given by
 \[(\eta^{1}\ot_{A}\eta^{2})\bra X=\eta^{1}\bra X\ro\ot_{A}\eta^{2}\bra X\rt,\]
and the coaction is given by
\[
\delta_{R}(\eta^{1}\ot_{A}\eta^{2})=\eta^{1}\rz\ot_{A}\eta^{2}\rz\ot_{A}\eta^{1}\ro\eta^{2}\ro,
\]
where the $A$-bimodule structure is given by $a^{.}(\eta^{1}\ot_{A}\eta^{2})^{.}a'=a^{.}\eta^{1}\ot_{A}\eta^{2}{}^{.}a'$. It is not hard to see the tensor product of two Yetter-Drinfeld modules also satisfies the Yetter-Drinfeld condition.
We can also see
 $\mathcal{YD}^{\cH}_{\cR}$ is monoidal category equipped with right $\cL$ diagonal coaction. More precisely,  if $\Lambda^{1}, \Lambda^{2}\in \mathcal{YD}^{\cH}_{\cR}$, and $\eta^{1}\ot_{A}\eta^{2}\in \Lambda^{1}\ot_{A}\Lambda^{2}$,
\[\delta_{L}(\eta^{1}\ot_{A}\eta^{2}):=\eta^{1}\z\ot_{A}\eta^{2}\z\ot_{B}\eta^{1}\o\eta^{2}\o,\]
where the $B$-bimodule structure is given by
$b'^*(\eta^{1}\ot_{A}\eta^{2})^{*}b=\eta^{1*}b\ot_{A}b'^{*}\eta^{2}$.
It is also not hard to see the pair of diagonal coaction $(\delta_{L}, \delta_{R})$ make $\Lambda^{1}\ot_{A} \Lambda^{2}$ a right $(\cL, \cR, S)$ comodule.
Similarly, ${}^{\cR}\mathcal{YD}_{\cR}$ and ${}^{\cH}\mathcal{YD}_{\cR}$ are also monoidal categories with diagonal actions. And its left $\cR$ coaction is given by:
\begin{align*}
    {}_{R}\delta(\eta^{1}\ot_{A}\eta^{2})=\eta^{2}\rmo\eta^{1}\rmo\ot_{A}\eta^{1}\rz\ot_{A}\eta^{2}\rz,
\end{align*}
and for ${}^{\cH}\mathcal{YD}_{\cR}$, the left $\cL$ coaction is
\begin{align*}
    {}_{L}\delta(\eta^{1}\ot_{B}\eta^{2})=\eta^{2}\mo\eta^{1}\mo\ot_{B}\eta^{1}\z\ot_{A}\eta^{2}\z.
\end{align*}

\begin{thm}\label{thm. the category of Yetter-Drinfeld module is braided}
  Let $\cR$ be  a right bialgebroid, then $\mathcal{YD}^{\cR}_{\cR}$ is a pre-braided monoidal category. If $(\cL, \cR, S)$ is a Hopf algebroid, then $\mathcal{YD}^{\cH}_{\cR}$ is a braided monoidal category.
\end{thm}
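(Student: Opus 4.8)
The plan is to define the braiding on Yetter--Drinfeld modules by the formula obtained from restricting the Hopf-bimodule braiding $\sigma$ of Theorem \ref{thm. braided Hopf bimodule category} to the left coinvariants, and to verify its properties directly. Since the first assertion only assumes a right bialgebroid (no antipode, hence no access to Theorem \ref{thm. category equivalence}), I would establish the pre-braided structure by hand, and then obtain invertibility in the Hopf algebroid case, where the antipode and the extra right $\cL$-coaction of a full module become available.

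Concretely, for $\Lambda^1,\Lambda^2\in\mathcal{YD}^\cR_\cR$ I would set
\[
c_{\Lambda^1,\Lambda^2}(\eta^1\ot_A\eta^2)=\eta^2\rz\ot_A\eta^1\bra\eta^2\ro ,
\]
which only involves the right $\cR$-action and the right $\cR$-coaction, so it makes sense over an arbitrary right bialgebroid. First I would check well-definedness: using $\eta\bra s_R(a)=\eta^{.}a$, $\eta\bra t_R(a)=a^{.}\eta$, the $A$-bimodule-map property of $\delta_R$, and the defining relation of $\ot_A$, one sees that $c$ respects the balancing on both sides and lands in the Takeuchi product $\Lambda^2\times_A\cR$. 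Naturality in $\Lambda^1,\Lambda^2$ is immediate, since morphisms of Yetter--Drinfeld modules commute with the action and coaction.

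The substantive point is that $c$ is a morphism in $\mathcal{YD}^\cR_\cR$. Right $\cR$-linearity, $c((\eta^1\ot_A\eta^2)\bra X)=c(\eta^1\ot_A\eta^2)\bra X$, follows after expanding the diagonal action $\eta^1\bra X\ro\ot_A\eta^2\bra X\rt$ and invoking the Yetter--Drinfeld identity of Definition \ref{def. right-right YD} for $\eta^2$ to move the $\cR$-action past its coaction. Right $\cR$-colinearity, $\delta_R\circ c=(c\ot_A\id)\circ\delta_R$ with respect to the diagonal coaction, is where the Yetter--Drinfeld condition is used most heavily, together with coassociativity of $\delta_R$ and the Takeuchi relations; this is the step I expect to be the main obstacle. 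Granting that $c$ is a well-defined morphism, the two hexagon axioms follow from coassociativity of $\delta_R$ and associativity of $\bra$ by direct computation, of the same flavour as the Yang--Baxter verification at the end of the proof of Theorem \ref{thm. braided Hopf bimodule category}; this yields the pre-braided monoidal structure.

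Finally, for $\mathcal{YD}^\cH_\cR$ the module is in addition a right comodule of $(\cL,\cR,S)$, so it carries a right $\cL$-coaction $\delta_L(\eta)=\eta\z\ot_B\eta\o$, and $S$ is invertible. I would then exhibit
\[
c^{-1}(\eta^2\ot_A\eta^1)=\eta^1\bra S^{-1}(\eta^2\o)\ot_A\eta^2\z ,
\]
which is exactly the restriction of $\sigma^{-1}$ from Theorem \ref{thm. braided Hopf bimodule category} to coinvariants; its well-definedness over the balanced tensor products and the two-sided inverse property $c\circ c^{-1}=\id=c^{-1}\circ c$ are checked using the antipode identities of Definition \ref{def. full Hopf algebroid3}. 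Conceptually, the same conclusion follows by noting that the equivalence of Theorem \ref{thm. category equivalence} between full Hopf bimodules and full right-right Yetter--Drinfeld modules is strong monoidal, carrying $\ot_H$ to $\ot_A$ with the diagonal (co)actions, so that the invertible braiding of $({}^{\cH}_{H}\mathcal{M}{}_{H}^{\cH},\ot_{H})$ transports to an invertible $c$ on $\mathcal{YD}^\cH_\cR$; verifying this monoidality (via Corollary \ref{cor. left bivariant bimodule}) is the alternative route, and the hardest part there is again the colinearity bookkeeping.
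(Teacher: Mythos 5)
Your proposal is correct and follows essentially the same route as the paper: the paper defines the braiding by exactly your formula $\sigma(\eta^{1}\ot_{A}\eta^{2})=\eta^{2}\rz\ot_{A}\eta^{1}\bra\eta^{2}\ro$, verifies right $\cR$-linearity and $\cR$-colinearity directly from the Yetter--Drinfeld condition, checks the braid relation by direct computation, and in the full (Hopf algebroid) case exhibits the same inverse $\sigma^{-1}(\eta^{2}\ot_{A}\eta^{1})=\eta^{1}\bra S^{-1}(\eta^{2}\o)\ot_{A}\eta^{2}\z$. Your alternative transport-of-structure route via Theorem \ref{thm. category equivalence} is not what the paper does, but the main line of your argument coincides with the paper's proof.
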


\begin{proof}
   Given two Yetter-Drinfeld modules $\Lambda^{1}, \Lambda^{2}\in \mathcal{YD}^{\cR}_{\cR}$, and $\eta^{1}\in \Lambda^{1}$ and $\eta^{2}\in \Lambda^{2}$. The braiding is given by

   \begin{align}       \sigma(\eta^{1}\ot_{A}\eta^{2})=\eta^{2}\rz\ot_{A}\eta^{1}\bra\eta^{2}\ro,
   \end{align}
  We can check $\sigma$ factors through all the balanced tensor products and $A$-bilinear. We can also see $\sigma$ is right $\cR$-linear:
\begin{align*}
    \sigma((\eta^{1}\ot_{A}\eta^{2})\bra X)=&(\eta^{2}\bra X\rt)\rz\ot (\eta^{1}\bra X\ro)\bra (\eta^{2}\bra X\rt)\ro\\
    =&\eta^{2}\rz\bra X\ro \ot_{A} \eta^{1}\bra(\eta^{2}\ro X\rt)\\
    =&\sigma(\eta^{1}\ot_{A}\eta^{2})\bra X.
\end{align*}
It is also right $\cR$-colinear:
\begin{align*}
    \delta_{R}(\sigma(\eta^{1}\ot_{A}\eta^{2}))=&\eta^{2}\rz\rz\ot_{A}(\eta^{1}\bra\eta^{2}\ro)\rz\ot_{A}\eta^{2}\rz\ro(\eta^{1}\bra\eta^{2}\ro)\ro\\
    =&\eta^{2}\rz\ot_{A}\eta^{1}\rz\bra\eta^{2}\ro\ot_{A}\eta^{1}\ro\eta^{2}\rt\\
    =&(\sigma\ot_{A}\id)\circ \delta_{R}(\eta^{1}\ot_{A}\eta^{2}).
\end{align*}
Finally, given three Yetter-Drinfeld modules $\Lambda^{1}, \Lambda^{2}, \Lambda^{3}\in \mathcal{YD}^{\cR}_{\cR}$, and $\forall \eta^{1}\ot_{A}\eta^{2}\ot_{A}\eta^{3}\in \Lambda^{1}\ot_{A}\Lambda^{2}\ot_{A}\Lambda^{3}$
\begin{align*}
     (\sigma\ot_{A}\id)&(\id\ot_{A}\sigma)(\sigma\ot_{A}\id)(\eta^{1}\ot_{A}\eta^{2}\ot_{A}\eta^{3})\\
     =&(\sigma\ot_{A}\id)(\id\ot_{A}\sigma)(\eta^{2}\rz\ot_{A}\eta^{1}\bra\eta^{2}\ro\ot_{A}\eta^{3})\\
     =&(\sigma\ot_{A}\id)(\eta^{2}\rz\ot_{A}\eta^{3}\rz\ot_{A}\eta^{1}\bra(\eta^{2}\ro\eta^{3}\ro))\\
     =&\eta^{3}\rz\ot_{A}\eta^{2}\rz\bra\eta^{3}\ro \ot_{A}\eta^{1}\bra(\eta^{2}\ro\eta^{3}\rt).
 \end{align*}
On the other hand,
\begin{align*}
    (\id\ot_{A}\sigma)&(\sigma\ot_{A}\id)(\id\ot_{A}\sigma)(\eta^{1}\ot_{A}\eta^{2}\ot_{A}\eta^{3})\\
     =&(\id\ot_{A}\sigma)(\sigma\ot_{A}\id)(\eta^{1}\ot_{A}\eta^{3}\rz\ot_{A}\eta^{2}\bra\eta^{3}\ro)\\
     =&(\id\ot_{A}\sigma)(\eta^{3}\rz\ot_{A}\eta^{1}\bra\eta^{3}\ro\ot_{A}\eta^{2}\bra\eta^{3}\rt)\\
     =&\eta^{3}\rz\ot_{A}(\eta^{2}\bra\eta^{3}\rt)\rz\ot_{A}\eta^{1}\bra(\eta^{3}\ro( \eta^{2}\bra\eta^{3}\rt)\ro)\\
     =&\eta^{3}\rz\ot_{A}\eta^{2}\rz\bra\eta^{3}\ro \ot_{A}\eta^{1}\bra(\eta^{2}\ro\eta^{3}\rt).
\end{align*}

So  $\mathcal{YD}^{\cR}_{\cR}$ is a pre-braided monoidal category. We can also check $\mathcal{YD}^{\cH}_{\cR}$ is a braided monoidal category. More precisely, the inverse of the braiding is

\[\sigma^{-1}(\eta^{2}\ot_{A}\eta^{1})=\eta^{1}\bra S^{-1}(\eta^{2}\o)\ot_{A}\eta^{2}\z.\]
We will skip the proof that $\sigma^{-1}$ is indeed the inverse of $\sigma$, as this is similar to the inverse of the braiding for the full Hopf bimodule.

\end{proof}

Now we can show the categories of full Yetter-Drinfeld modules and full Hopf bimodules are equivalent.

\begin{thm}\label{thm. equivalence of monoidal category}
    Let $(\cL, \cR, S)$ be a Hopf algebroid, then $(\mathcal{YD}^{\cR}_{\cR}, \ot_{A})$ is equivalent to $({}_{H}^{\cL}\CM_{H}^{\cR}, \ot_{H})$ as pre-braided monoidal category, and $(\mathcal{YD}^{\cH}_{\cR}, \ot_{A})$ is equivalent to $({}^{\cH}_{H}\mathcal{M}{}_{H}^{\cH}, \ot_{H})$ as braided monoidal category.
\end{thm}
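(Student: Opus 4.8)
The plan is to upgrade the category equivalence of Theorem \ref{thm. category equivalence} to an equivalence of (pre-)braided monoidal categories. Write $F\colon {}_H^{\cL}\CM_H^{\cR}\to \mathcal{YD}^{\cR}_{\cR}$ for the functor $\Gamma\mapsto {}^{co\cL}\Gamma$ of Theorem \ref{thm. category equivalence}, with quasi-inverse $G\colon \Lambda\mapsto \cL\ot_A\Lambda$. Since Theorem \ref{thm. category equivalence} already supplies the natural isomorphisms $\Lambda\simeq {}^{co\cL}(\cL\ot_A\Lambda)$ and $\Phi\colon\Gamma\xrightarrow{\sim}\cL\ot_{B^{op}}{}^{co\cL}\Gamma$, the underlying equivalence is in hand; it remains to equip $F$ with a strong monoidal structure and to check that $F$ carries the braiding $\sigma$ of Theorem \ref{thm. braided Hopf bimodule category} to the braiding of Theorem \ref{thm. the category of Yetter-Drinfeld module is braided}.

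For the monoidal structure I would show that
\[\eta^1\ot_A\eta^2\longmapsto \eta^1\ot_H\eta^2,\qquad {}^{co\cL}\Gamma^1\ot_A {}^{co\cL}\Gamma^2\to {}^{co\cL}(\Gamma^1\ot_H\Gamma^2),\]
is an isomorphism in $\mathcal{YD}^{\cR}_{\cR}$. First one checks it is well defined over $\ot_A$, using the balancing relation $\eta^1\bra s_R(a)\ot_A\eta^2=\eta^1\ot_A\eta^2\bra t_R(a)$ together with the bimodule relations of $\cL\ot_A\Lambda$ recorded in Theorem \ref{thm. category equivalence}. That the image is coinvariant is immediate, since ${}_L\delta(\eta^1\ot_H\eta^2)=\eta^1\mo\eta^2\mo\ot_B(\eta^1\z\ot_H\eta^2\z)=1\ot_B(\eta^1\ot_H\eta^2)$ for left invariant $\eta^1,\eta^2$. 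Bijectivity follows from Corollary \ref{cor. left bivariant bimodule} together with the decomposition $\Phi$: applying $\Phi$ to both factors identifies $\Gamma^1\ot_H\Gamma^2$ with $(\cL\ot_A{}^{co\cL}\Gamma^1)\ot_H(\cL\ot_A{}^{co\cL}\Gamma^2)$, and using the balanced product one reduces this Hopf bimodule to $\cL\ot_A({}^{co\cL}\Gamma^1\ot_A{}^{co\cL}\Gamma^2)$, whose coinvariants are exactly ${}^{co\cL}\Gamma^1\ot_A{}^{co\cL}\Gamma^2$. Finally one verifies that the map intertwines the diagonal Yetter--Drinfeld action and coaction with the induced structure, a direct computation from $\eta\bra X=X\m\la\eta\ra X\p$ and from the diagonal $\delta_R$ on the $\ot_H$-tensor product; the unit objects correspond and the associativity/unit coherences are routine.

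The braiding compatibility is then the conceptual heart of the statement, and it is essentially immediate. As already recorded inside the proof of Theorem \ref{thm. braided Hopf bimodule category}, on invariant elements the Hopf-bimodule braiding reduces to
\[\sigma(\eta^1\ot_H\eta^2)=\eta^2\rz\ot_H\eta^1\bra\eta^2\ro,\]
which is verbatim the braiding of $\mathcal{YD}^{\cR}_{\cR}$ from Theorem \ref{thm. the category of Yetter-Drinfeld module is braided}. Since $\sigma$ maps ${}^{co\cL}\Gamma^1\ot_H{}^{co\cL}\Gamma^2$ into ${}^{co\cL}\Gamma^2\ot_H{}^{co\cL}\Gamma^1$, transporting along the monoidal isomorphism above shows $F$ sends $\sigma$ to $\sigma$, so $F$ is a (pre-)braided monoidal equivalence. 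For the full case, the functor of the full version of Theorem \ref{thm. category equivalence} additionally matches the extra right $\cL$-coaction (via Proposition \ref{prop. prop of full Hopf bimodule}), so the monoidal isomorphism is also $\cL$-colinear; since both braidings are now invertible with matching inverses, one obtains an equivalence of genuinely braided monoidal categories $\mathcal{YD}^{\cH}_{\cR}\simeq {}^{\cH}_{H}\mathcal{M}{}_{H}^{\cH}$.

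The main obstacle is the strong monoidal structure, specifically the identification ${}^{co\cL}(\Gamma^1\ot_H\Gamma^2)\cong {}^{co\cL}\Gamma^1\ot_A{}^{co\cL}\Gamma^2$ and the verification that the diagonal action and coaction restrict correctly through the two balanced products $\ot_A$ and $\ot_H$; keeping the identifications $A=B^{op}$, $s_L(B)=t_R(A)$, $t_L(B)=s_R(A)$ straight throughout is the delicate bookkeeping. By contrast, once the monoidal equivalence is established the braiding compatibility costs almost nothing, because the two braidings coincide verbatim on coinvariants.
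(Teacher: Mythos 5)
Your proposal is correct and follows essentially the same route as the paper: the paper also equips the coinvariants functor with the strong monoidal structure $\varsigma(\eta^1\ot_A\eta^2)=\eta^1\ot_H\eta^2$ (together with the reduction $\xi^{-1}(X\ot_A\eta^1\ot_H Y\ot_A\eta^2)=(X\ot_A\eta^1)\ra Y\ot_A\eta^2$, which is exactly your balanced-product step), and settles braiding compatibility by the same observation that $\sigma$ restricted to coinvariants is verbatim the Yetter--Drinfeld braiding $\eta^2\rz\ot_H\eta^1\bra\eta^2\ro$. The only cosmetic difference is that the paper exhibits an explicit inverse $\varsigma^{-1}(\rho^1\ot_H\rho^2)=\rho^1\ra\rho^2\mo{}_{+}\ot_A\rho^2\mo{}_{-}\la\rho^2\z$, whereas you deduce bijectivity from $\Phi$ and $\xi$; these are the same ingredients.
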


\begin{proof}
    The first half of the proof can be given in the same way as \cite{schau2}. More precisely, for any $\Lambda^{1}, \Lambda^{2}\in \mathcal{YD}^{\cR}_{\cR}$ (resp. $\mathcal{YD}^{\cH}_{\cR}$),
    we can see $\xi: \cL\ot_{A}(\Lambda^{1}\ot_{A}\Lambda^{2})\to (\cL\ot_{A}\Lambda^{1})\ot_{H}(\cL\ot_{A}\Lambda^{2})$ given by
    \[\xi(X\ot_{A}\eta^{1}\ot_{A}\eta^{2})=X\ot_{A}\eta^{1}\ot_{H}1\ot_{A}\eta^{2}\]
    is invertible, with its inverse given by
    \[\xi^{-1}(X\ot_{A}\eta^{1}\ot_{H}Y\ot_{A}\eta^{2})=(X\ot_{A}\eta^{1})\ra Y\ot_{A}\eta^{2}.\]
    It is not hard to see $\xi$ is a morphism in the categories $({}^{\cL}_{H }\mathcal{M}{}_{ H}^{\cR}, \ot_{H})$ (resp. $({}^{\cH}_{H}\mathcal{M}{}_{H}^{\cH}, \ot_{H})$), and  satisfies the coherent condition. Now, for any $\Gamma^{1}, \Gamma^{2}\in {}^{\cL}_{H }\mathcal{M}{}_{ H}^{\cR}$ (resp.  ${}^{\cH}_{H}\mathcal{M}{}_{H}^{\cH}$), let's check $\varsigma: {}^{co\cL}\Gamma^{1}\ot_{A}{}^{co\cL}\Gamma^{2}\to {}^{co\cL}(\Gamma^{1}\ot_{H}\Gamma)$ given by
    \[\varsigma(\eta^{1}\ot_{A}\eta^{2})=\eta^{1}\ot_{H}\eta^{2}\]
 is invertible, with its inverse given by
 \[\varsigma^{-1}(\rho^{1}\ot_{H}\rho^{2})=\rho^{1}\ra \rho^{2}\mo{}_{+}\ot_{A}\rho^{2}\mo{}_{-}\la\rho^{2}\z.\]
Clearly, $\varsigma$ is well defined.    It is right $\cR$-linear:
\begin{align*}
    \varsigma((\rho^{1}\ot_{A}\rho^{2})\bra X)=&\varsigma(\rho^{1}\bra X\ro\ot_{A}\rho^{2}\bra X\rt)\\
    =&X\ro\m\la\rho^{1}\ra X\ro\p\ot_{H} X\rt\m\la\rho^{2}\ra X\rt\p\\
    =&X\m\la\rho^{1}\ra X\p\ro\ot_{H} X\p\rt\m\la\rho^{2}\ra X\p\rt\p\\
    =&X\m\la\rho^{1}\ot_{H} \rho^{2}\ra X\p\\
    =&\varsigma(\rho^{1}\ot_{A}\rho^{2})\bra X.
\end{align*}
It is trivial to see $\varsigma$ is right $\cR$-colinear (resp. $\cL$-colinear if $\Gamma^{1}, \Gamma^{2}\in {}^{\cH}_{H}\mathcal{M}{}_{H}^{\cH}$). And $\varsigma$ also satisfies the coherent condition. We can see $\varsigma^{-1}$ is well defined, since we know $\rho^{2}\mo{}_{+}\ot_{A}\rho^{2}\mo{}_{-}\la\rho^{2}\z\in \cL\ot_{A}{}^{co\cL}\Gamma^{2}$, we can also see
\begin{align*}
    {}_{L}\delta(\rho^{1}\ra\rho^{2}\mo{}_{+})\ot_{B}\rho^{2}\mo{}_{-}\la\rho^{2}\z=&\rho^{1}\mo\rho^{2}\mo{}_{+}\o\ot_{A}\rho^{1}\z\ra\rho^{2}\mo{}_{+}\t\ot_{A}\rho^{2}\mo{}_{-}\la\rho^{2}\z\\
    =&\rho^{1}\mo\rho^{2}\mo\o\ot_{B}\rho^{1}\z\ra\rho^{2}\mo\t{}_{+}\ot_{A}\rho^{2}\mo\t{}_{-}\la\rho^{2}\z\\
    =&1\ot_{B}\rho^{1}\ra\rho^{2}\mo{}_{+}\ot_{A}\rho^{2}\mo{}_{-}\la\rho^{2}\z,
\end{align*}
so the image of $\varsigma^{-1}$ belongs to ${}^{co\cL}\Gamma^{1}\ot_{A}{}^{co\cL}\Gamma^{2}$.
It is trivial to show that $\varsigma^{-1}$ is the inverse of $\varsigma$. Moreover, we have
\begin{align*}
    \varsigma\circ \sigma(\eta^{1}\ot_{A}\eta^{2})=\eta^{2}\rz\ot_{H}\eta^{1}\bra \eta^{2}\ro
    =\sigma\circ \varsigma(\eta^{1}\ot_{A}\eta^{2}),
\end{align*}
for any $\eta^{1}\ot_{A}\eta^{2}\in {}^{co\cL}\Gamma^{1}\ot_{A}{}^{co\cL}\Gamma^{2}$, where the $\sigma$ on the left-hand side is the braiding of the category of Yetter-Drinfeld modules (resp. full Yetter-Drinfeld modules) and the $\sigma$ on the right-hand side is the braiding of the category of Hopf bimodules (resp. full Hopf bimodules). So these monoidal categories are equivalent as pre-braided monoidal categories (resp. braided monoidal categories).

\end{proof}

Similarly,

\begin{thm}\label{thm. equivalence of monoidal category1}
    Let $(\cL, \cR, S)$ be a Hopf algebroid, then $({}^{\cL}_{\cL}\mathcal{YD}, \ot_{B})$ is a pre-braided monoidal category and $({}^{\cH}_{\cL}\mathcal{YD}, \ot_{B})$ is a braided monoidal category. Moreover, ${}({}_{\cL}^{\cL}\mathcal{YD}, \ot_{B})$ is equivalent to $({}_{H}^{\cL}\CM_{H}^{\cR}, \ot_{H})$ as pre-braided monoidal category, and $({}^{\cH}_{\cL}\mathcal{YD}, \ot_{B})$ is equivalent to $({}^{\cH}_{H}\mathcal{M}{}_{H}^{\cH}, \ot_{H})$ as braided monoidal category.
\end{thm}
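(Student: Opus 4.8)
The plan is to carry out the exact left-handed mirror of Theorems~\ref{thm. the category of Yetter-Drinfeld module is braided} and~\ref{thm. equivalence of monoidal category}, now using the right invariant correspondence of Theorem~\ref{thm. right invariant of the main theorem} in place of the left invariant one. First I equip $({}^{\cL}_{\cL}\mathcal{YD},\ot_{B})$ with its monoidal structure: for $\Lambda^{1},\Lambda^{2}\in{}^{\cL}_{\cL}\mathcal{YD}$ the balanced tensor product over $B$ identifies $\eta^{1}.b\ot_{B}\eta^{2}$ with $\eta^{1}\ot_{B}b.\eta^{2}$, the left $\cL$-action is diagonal, $X\bla(\eta^{1}\ot_{B}\eta^{2})=X\o\bla\eta^{1}\ot_{B}X\t\bla\eta^{2}$, and the left $\cL$-coaction is ${}_{L}\delta(\eta^{1}\ot_{B}\eta^{2})=\eta^{1}\mo\eta^{2}\mo\ot_{B}(\eta^{1}\z\ot_{B}\eta^{2}\z)$; stability of the Yetter--Drinfeld condition of Definition~\ref{def. left YD} under $\ot_{B}$ is checked as in the right-handed case. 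The braiding is
\[\sigma(\eta^{1}\ot_{B}\eta^{2})=\eta^{1}\mo\bla\eta^{2}\ot_{B}\eta^{1}\z,\]
which is precisely the formula on right invariants already recorded inside the proof of Theorem~\ref{thm. braided Hopf bimodule category}.

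Then I verify, by the formal mirror of the computation in Theorem~\ref{thm. the category of Yetter-Drinfeld module is braided}, that $\sigma$ factors through the balanced tensor product, is $B$-bilinear and left $\cL$-linear, is left $\cL$-colinear, and satisfies the hexagon identity; everywhere $s_{R},t_{R},\delta_{R},\bra$ are replaced by $s_{L},t_{L},{}_{L}\delta,\bla$ and the comodule legs are moved to the opposite side. For the full category ${}^{\cH}_{\cL}\mathcal{YD}$, which additionally carries a left $\cR$-coaction $\eta\mapsto\eta\rmo\ot_{A}\eta\rz$, the inverse braiding is
\[\sigma^{-1}(\eta^{2}\ot_{B}\eta^{1})=\eta^{1}\rz\ot_{B}S^{-1}(\eta^{1}\rmo)\bla\eta^{2};\]
note that in the Hopf algebra limit $\cL=\cR=H$ this reduces to the classical $\eta^{1}\z\ot_{B}S^{-1}(\eta^{1}\mo)\bla\eta^{2}$. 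Checking $\sigma\sigma^{-1}=\id=\sigma^{-1}\sigma$ uses the compatibility of the left $\cL$- and left $\cR$-comodule structures through $S^{\pm1}$ supplied by Definition~\ref{def. full Hopf algebroid1} and the appendix identities, exactly as for the full Hopf bimodule braiding.

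For the ``Moreover'' statement I transport everything through the equivalence $\Gamma\mapsto\Gamma^{co\cR}$, $\Lambda\mapsto\Lambda\ot_{A^{op}}\cR$ of Theorem~\ref{thm. right invariant of the main theorem}. As in Theorem~\ref{thm. equivalence of monoidal category} I produce the monoidal coherence data: the isomorphism
\[\xi\colon (\Lambda^{1}\ot_{B}\Lambda^{2})\ot_{A^{op}}\cR\to(\Lambda^{1}\ot_{A^{op}}\cR)\ot_{H}(\Lambda^{2}\ot_{A^{op}}\cR),\quad \eta^{1}\ot_{B}\eta^{2}\ot_{A^{op}}X\mapsto\eta^{1}\ot_{A^{op}}1\ot_{H}\eta^{2}\ot_{A^{op}}X,\]
together with $\varsigma\colon(\Gamma^{1})^{co\cR}\ot_{B}(\Gamma^{2})^{co\cR}\to(\Gamma^{1}\ot_{H}\Gamma^{2})^{co\cR}$, $\varsigma(\eta^{1}\ot_{B}\eta^{2})=\eta^{1}\ot_{H}\eta^{2}$, whose inverse is built from Corollary~\ref{cor. left bivariant bimodule} (using $\eta\ra X=X_{[+]}\la(X_{[-]}\bla\eta)$ to split the first tensor leg into its right invariant part and an element of $\cR$). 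Finally I check the single identity $\varsigma\circ\sigma=\sigma\circ\varsigma$ on $(\Gamma^{1})^{co\cR}\ot_{B}(\Gamma^{2})^{co\cR}$, matching the Yetter--Drinfeld braiding above with the Hopf bimodule braiding of Theorem~\ref{thm. braided Hopf bimodule category}; this upgrades the monoidal equivalence to a pre-braided one and, restricted to the full subcategories, to a braided monoidal equivalence.

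The main obstacle is bookkeeping rather than conceptual: the delicate points are that $\sigma^{-1}$ is a genuine two-sided inverse in the full case, and that $\varsigma^{-1}$ really lands in $(\Gamma^{1})^{co\cR}\ot_{B}(\Gamma^{2})^{co\cR}$. Both hinge on the interplay between the left $\cL$- and left $\cR$-coactions mediated by $S^{\pm1}$ and the (anti-)left Hopf translation maps $X_{+},X_{-},X_{[+]},X_{[-]}$, and it is the reduction to right invariant generators via Corollary~\ref{cor. left bivariant bimodule} that keeps the hexagon and the inverse computations finite. Once that reduction is set up, every remaining verification is a direct transcription of the already-established right-handed arguments.
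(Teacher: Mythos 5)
Your proposal is correct and takes essentially the same approach as the paper: the paper's own proof of this theorem merely records the braiding $\sigma(\eta^{1}\ot_{B}\eta^{2})=\eta^{1}\mo\bla\eta^{2}\ot_{B}\eta^{1}\z$ and its inverse $\sigma^{-1}(\eta^{2}\ot_{B}\eta^{1})=\eta^{1}\rz\ot_{B}S^{-1}(\eta^{1}\rmo)\bla \eta^{2}$, leaving everything else as the left-handed mirror of Theorems \ref{thm. the category of Yetter-Drinfeld module is braided} and \ref{thm. equivalence of monoidal category} transported through the right-invariant correspondence of Theorem \ref{thm. right invariant of the main theorem}, which is precisely the structure of your argument. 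Your write-up actually supplies more of the mirrored detail (the diagonal (co)module structure, the coherence maps $\xi$ and $\varsigma$, and the reduction to right invariants via Corollary \ref{cor. left bivariant bimodule}) than the paper itself does.
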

\begin{proof}

    Given two Yetter-Drinfeld modules $\Lambda^{1}, \Lambda^{2}\in{}_{\cL}^{\cL}\mathcal{YD}$, and $\eta^{1}\in \Lambda^{1}$ and $\eta^{2}\in \Lambda^{2}$. The braiding is given by
   \[\sigma(\eta^{1}\ot_{B}\eta^{2})=\eta^{1}\mo\bla\eta^{2}\ot_{B}\eta^{1}\z.\]
   For $\Lambda^{1}, \Lambda^{2}\in {}^{\cH}_{\cL}\mathcal{YD}$, its inverse is
    \[\sigma^{-1}(\eta^{2}\ot_{B}\eta^{1})=\eta^{1}\rz\ot_{B}S^{-1}(\eta^{1}\rmo)\bla \eta^{2}.\]

\end{proof}

For anti-Hopf bimodules, left-right Yetter-Drinfeld modules, and right-left Yetter-Drinfeld modules, we have

\begin{thm}\label{thm. equivalence of monoidal category2}
    Let $(\cL, \cR, S)$ be a Hopf algebroid, then $({}^{\cR}\mathcal{YD}_{R}, \ot_{A})$ and $({}_{\cL}\mathcal{YD}^{\cL}, \ot_{B})$ are pre-braided monoidal categories,  $({}^{\cH}\mathcal{YD}_{\cR}, \ot_{A})$ and $({}_{\cL}\mathcal{YD}^{\cH}, \ot_{B})$ are braided monoidal categories. Moreover,  $({}^{\cR}\mathcal{YD}_{R}, \ot_{A})$ and $({}_{\cL}\mathcal{YD}^{\cL}, \ot_{B})$ are equivalent to $({}_{H}^{\cR}\CM_{H}^{\cL}, \ot^{H})$ as pre-braided monoidal categories, $({}^{\cH}\mathcal{YD}_{\cR}, \ot_{A})$ and $({}_{\cL}\mathcal{YD}^{\cH}, \ot_{B})$ are equivalent to $({}_{H}^{\cH}\CM_{H}^{\cH}, \ot^{H})$ as braided monoidal categories.
\end{thm}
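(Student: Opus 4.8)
The plan is to transport the right-handed arguments of Theorems \ref{thm. the category of Yetter-Drinfeld module is braided}, \ref{thm. equivalence of monoidal category} and \ref{thm. equivalence of monoidal category1} to the anti-setting, using Theorem \ref{thm. category equivalence1} (the category equivalence between anti-Hopf bimodules and left-right, resp.\ right-left, Yetter-Drinfeld modules) and Theorem \ref{thm. braided Hopf bimodule category1} (which already makes $({}_{H}^{\cR}\mathcal{M}_{H}^{\cL},\ot^{H})$ pre-braided and $({}^{\cH}_{H}\mathcal{M}{}_{H}^{\cH},\ot^{H})$ braided). Thus the two essential tasks are: to exhibit the braidings on the Yetter-Drinfeld categories directly, and to upgrade the equivalences of Theorem \ref{thm. category equivalence1} to (pre-)braided monoidal equivalences.

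For the first task I would write down the candidate braiding on $({}^{\cR}\mathcal{YD}_{\cR},\ot_{A})$,
\[\sigma(\eta^{1}\ot_{A}\eta^{2})=\eta^{2}\rz\ot_{A}\eta^{1}\bra\eta^{2}\rmo,\]
which uses only the left $\cR$-coaction ${}_{R}\delta$ and the right $\cR$-action and is therefore the evident analogue of the braidings $\eta^{2}\rz\ot_{A}\eta^{1}\bra\eta^{2}\ro$ and $\eta^{1}\mo\bla\eta^{2}\ot_{B}\eta^{1}\z$ of Theorems \ref{thm. the category of Yetter-Drinfeld module is braided} and \ref{thm. equivalence of monoidal category1}. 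The verifications are line-by-line parallel to those theorems: $\sigma$ factors through the balanced tensor products and is $A$-bilinear, it is right $\cR$-linear and left $\cR$-colinear by the Yetter-Drinfeld identity of Definition \ref{def. left-right YD} together with the diagonal coaction ${}_{R}\delta(\eta^{1}\ot_{A}\eta^{2})=\eta^{2}\rmo\eta^{1}\rmo\ot_{A}\eta^{1}\rz\ot_{A}\eta^{2}\rz$, and it satisfies the braid relation on a triple tensor product. Passing to the full category ${}^{\cH}\mathcal{YD}_{\cR}$, where the left $\cL$-coaction ${}_{L}\delta$ is also available, I would exhibit the inverse
\[\sigma^{-1}(\eta^{2}\ot_{A}\eta^{1})=\eta^{1}\bra S^{-1}(\eta^{2}\mo)\ot_{A}\eta^{2}\z\]
and check $\sigma\sigma^{-1}=\id=\sigma^{-1}\sigma$ using the antipode axioms of Definition \ref{def. full Hopf algebroid3}.

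For the second task I would reproduce the structure maps $\xi$ and $\varsigma$ of Theorem \ref{thm. equivalence of monoidal category} in the anti-setting. On objects, Theorem \ref{thm. category equivalence1} assigns to $\Lambda\in{}^{\cR}\mathcal{YD}_{\cR}$ the anti-Hopf bimodule $\Lambda\ot_{A}\cR$ and recovers $\Lambda$ as the coinvariants $\Gamma^{co\cL}$. I would construct the natural isomorphism $(\Lambda^{1}\ot_{A}\cR)\ot^{H}(\Lambda^{2}\ot_{A}\cR)\cong(\Lambda^{1}\ot_{A}\Lambda^{2})\ot_{A}\cR$ (the analogue of $\xi$), and dually the isomorphism ${}^{co\cL}\Gamma^{1}\ot_{A}{}^{co\cL}\Gamma^{2}\cong{}^{co\cL}(\Gamma^{1}\ot^{H}\Gamma^{2})$ (the analogue of $\varsigma$), checking in each case that it is well defined, lands in the correct coinvariant subspace, is $\cR$-(co)linear, and satisfies the coherence conditions. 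Finally one verifies that these isomorphisms intertwine the two braidings, exactly as in the closing computation $\varsigma\circ\sigma=\sigma\circ\varsigma$ of Theorem \ref{thm. equivalence of monoidal category}; this yields the equivalence with $({}_{H}^{\cR}\mathcal{M}_{H}^{\cL},\ot^{H})$, and the (pre-)braided structure can alternatively just be transported along it. The statements for $({}_{\cL}\mathcal{YD}^{\cL},\ot_{B})$ and $({}_{\cL}\mathcal{YD}^{\cH},\ot_{B})$ follow from the right-left half of Theorem \ref{thm. category equivalence1} by the same argument with $\cL$ and $\cR$, source and target, and the two Hopf-module structures interchanged.

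The main obstacle will be bookkeeping rather than conceptual difficulty. Checking that $\sigma$ is left $\cR$-colinear and satisfies the braid relation forces one to manipulate the reversed diagonal coaction $\eta^{2}\rmo\eta^{1}\rmo$ together with the anti-right Hopf maps $X\nm,X\np$, repeatedly invoking the identities of Definition \ref{def. full Hopf algebroid3} and the appendix. The point where the anti-setting genuinely differs from the right-handed case—and where I expect the most care to be needed—is in verifying that the analogue of $\varsigma^{-1}$ lands in the coinvariants and is compatible with the reversed tensor product $\ot^{H}$, for which the balancing convention $X\la\rho^{1}\ot^{H}\rho^{2}=\rho^{1}\ot^{H}\rho^{2}\ra X$ must be tracked throughout.
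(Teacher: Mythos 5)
Your overall plan---exhibit explicit braidings on the Yetter--Drinfeld categories and upgrade the correspondences of Theorem \ref{thm. category equivalence1} to (pre-)braided monoidal equivalences---is the same as the paper's (the paper's proof is essentially just the list of braiding formulas and the verification that the claimed inverses invert them). The genuine gap is that your concrete braiding formula is wrong, and here the formula \emph{is} the content of the theorem. On ${}^{\cR}\mathcal{YD}_{\cR}$ the paper's braiding is
\[\tilde{\sigma}(\eta^{1}\ot_{A}\eta^{2})=\eta^{2}\bra \eta^{1}\rmo\ot_{A}\eta^{1}\rz,\]
i.e.\ the left coaction of the \emph{first} factor acts on the \emph{second} factor (the same pattern as the left-left braiding $\eta^{1}\mo\bla\eta^{2}\ot_{B}\eta^{1}\z$), whereas you propose $\sigma(\eta^{1}\ot_{A}\eta^{2})=\eta^{2}\rz\ot_{A}\eta^{1}\bra\eta^{2}\rmo$, which follows the right-right pattern instead. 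These are not two conventions for the same map: with the reversed diagonal coaction ${}_{R}\delta(\eta^{1}\ot_{A}\eta^{2})=\eta^{2}\rmo\eta^{1}\rmo\ot_{A}\eta^{1}\rz\ot_{A}\eta^{2}\rz$ that you yourself quote, left $\cR$-colinearity of your map forces the identity
\[(\rho\bra X\ro)\rmo X\rt\ot_{A}(\rho\bra X\ro)\rz=X\ro\,\rho\rmo\ot_{A}\rho\rz\bra X\rt,\]
which is the axiom of Definition \ref{def. left-right YD} with both algebra products written in the opposite order; over a noncommutative $\cR$ this is a genuinely different condition and fails in general. (Already for an ordinary Hopf algebra: your map needs $(\rho\bra X\o)\mo X\t\ot(\rho\bra X\o)\z=X\o\rho\mo\ot\rho\z\bra X\t$, while the axiom states $X\t(\rho\bra X\o)\mo\ot(\rho\bra X\o)\z=\rho\mo X\o\ot\rho\z\bra X\t$.) The paper's $\tilde\sigma$ is colinear precisely because the reversed multiplication order in the diagonal coaction matches the order in the axiom.

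The inverse is wrong for the same reason, and also on its own terms. The paper's inverse on ${}^{\cH}\mathcal{YD}_{\cR}$ is $\tilde{\sigma}^{-1}(\eta^{2}\ot_{A}\eta^{1})=\eta^{1}\z\ot_{A}\eta^{2}\bra S(\eta^{1}\mo)$, using $S$ (not $S^{-1}$) on the $\cL$-coaction leg of the factor whose coaction appears in $\tilde\sigma$. With your pair, the composite $\sigma^{-1}\circ\sigma$ produces (in the Hopf-algebra specialization) the factor $\eta^{2}\mo\o S^{-1}(\eta^{2}\mo\t)$ acting on $\eta^{1}$, and $X\o S^{-1}(X\t)$ is not a counit term unless $S^{2}=\id$; so $\sigma^{-1}$ does not even invert your $\sigma$ one of the two ways round. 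The same mismatch would propagate to your treatment of $({}_{\cL}\mathcal{YD}^{\cL},\ot_{B})$ if you take the analogous ``evident'' formula there; the correct ones are $\tilde{\sigma}(\eta^{1}\ot_{B}\eta^{2})=\eta^{2}\z\ot_{B}\eta^{2}\o\bla\eta^{1}$ with inverse $\tilde{\sigma}^{-1}(\eta^{2}\ot_{B}\eta^{1})=S(\eta^{2}\ro)\bla\eta^{1}\ot_{B}\eta^{2}\rz$. In short: the transport-of-structure scaffolding is fine, but you must re-derive the braidings so that they pair the coaction leg with the correct tensor factor relative to the stated YD axioms and the reversed diagonal coactions, rather than transplanting the right-right formulas.
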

\begin{proof}

    Given two Yetter-Drinfeld modules $\Lambda^{1}, \Lambda^{2}\in{}^{\cR}\mathcal{YD}_{\cR}$, and $\eta^{1}\in \Lambda^{1}$ and $\eta^{2}\in \Lambda^{2}$. The braiding is given by
   \[\tilde{\sigma}(\eta^{1}\ot_{A}\eta^{2})=\eta^{2}\bra \eta^{1}\rmo\ot_{A}\eta^{1}\rz,\]
  For $\Lambda^{1}, \Lambda^{2}\in {}^{\cH}\mathcal{YD}_{\cR}$, the inverse $\tilde{\sigma}^{-1}$ is
    \[\tilde{\sigma}^{-1}(\eta^{2}\ot_{A}\eta^{1})=\eta^{1}\z\ot_{A} \eta^{2}\bra S(\eta^{1}\mo).\]
It is not hard to see
\begin{align*}
    \tilde{\sigma}^{-1}\circ \tilde{\sigma}(\eta^{1}\ot_{A}\eta^{2})=&\eta^{1}\rz\z\ot_{A}\eta^{2}\bra (\eta^{1}\rmo S(\eta^{1}\rz\mo))\\
    =&\eta^{1}\z\ot_{A}\eta^{2}\bra (\eta^{1}\mo\ro S(\eta^{1}\mo\rt))\\
    =&\eta^{1}\ot_{A}\eta^{2},
\end{align*}
and
\begin{align*}
    \tilde{\sigma}\circ \tilde{\sigma}^{-1}(\eta^{2}\ot_{A}\eta^{1})=&\eta^{2}\bra(S(\eta^{1}\mo)\eta^{1}\z\rmo)\ot_{A}\eta^{1}\z\rz\\
    =&\eta^{2}\bra(S(\eta^{1}\rmo\o)\eta^{1}\rmo\o)\ot_{A}\eta^{1}\rz\\
    =&\eta^{2}\ot_{A}\eta^{1}.
\end{align*}
 Similarly, given two Yetter-Drinfeld modules $\Lambda^{1}, \Lambda^{2}\in{}_{\cL}\mathcal{YD}^{\cL}$, and $\eta^{1}\in \Lambda^{1}$ and $\eta^{2}\in \Lambda^{2}$, the braiding is given by
   \[\tilde{\sigma}(\eta^{1}\ot_{B}\eta^{2})=\eta^{2}\z\ot_{B}\eta^{2}\o\bla \eta^{1}.\]
   For $\Lambda^{1}, \Lambda^{2}\in {}_{\cL}\mathcal{YD}^{\cH}$, its inverse is
    \[\tilde{\sigma}^{-1}(\eta^{2}\ot_{B}\eta^{1})=S(\eta^{2}\ro)\bla \eta^{1}\ot_{B}\eta^{2}\rz.\]

\end{proof}

\begin{cor}
    Let $(\cL, \cR, S)$ be a Hopf algebroid, then $({}^{\cH}_{\cL}\mathcal{YD}, \ot_{B})$ and $(\mathcal{YD}^{\cH}_{\cR}, \ot_{A})$ are anti-monoidal equivalent to $({}^{\cH}{}\mathcal{YD}_{R}, \ot_{A})$ and $({}_{\cL}\mathcal{YD}^{\cH}, \ot_{B})$
as braided monoidal categories.
\end{cor}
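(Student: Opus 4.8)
The plan is to route everything through the category of full Hopf bimodules and exploit that $({}^{\cH}_{H}\mathcal{M}{}_{H}^{\cH}, \ot^{H})$ is, by construction, the reverse monoidal category of $({}^{\cH}_{H}\mathcal{M}{}_{H}^{\cH}, \ot_{H})$. This reduces the claim to a comparison of two monoidal structures on one common underlying category.

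First I would collect the equivalences already established. By Theorems \ref{thm. equivalence of monoidal category} and \ref{thm. equivalence of monoidal category1}, the categories $(\mathcal{YD}^{\cH}_{\cR}, \ot_{A})$ and $({}^{\cH}_{\cL}\mathcal{YD}, \ot_{B})$ are each braided monoidally equivalent to $({}^{\cH}_{H}\mathcal{M}{}_{H}^{\cH}, \ot_{H})$. By Theorem \ref{thm. equivalence of monoidal category2}, the categories $({}^{\cH}\mathcal{YD}_{\cR}, \ot_{A})$ and $({}_{\cL}\mathcal{YD}^{\cH}, \ot_{B})$ are each braided monoidally equivalent to $({}^{\cH}_{H}\mathcal{M}{}_{H}^{\cH}, \ot^{H})$. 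Thus the four categories split into two groups, and it suffices to produce an anti-monoidal braided equivalence between the two monoidal structures $\ot_{H}$ and $\ot^{H}$ on ${}^{\cH}_{H}\mathcal{M}{}_{H}^{\cH}$, and then compose with the equivalences above.

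Next, since $\ot^{H}$ is the reverse of $\ot_{H}$, one has $\Gamma^{1}\ot^{H}\Gamma^{2}=\Gamma^{2}\ot_{H}\Gamma^{1}$ together with the reversed balancing and the reversed diagonal coactions; indeed the $\ot^{H}$-balancing $X\la\rho^{1}\ot^{H}\rho^{2}=\rho^{1}\ot^{H}\rho^{2}\ra X$ corresponds, under the swap, exactly to the $\ot_{H}$-balancing on $\Gamma^{2}\ot_{H}\Gamma^{1}$. I would therefore take the identity functor as the candidate anti-monoidal equivalence, noting that it carries $\Gamma^{1}\ot_{H}\Gamma^{2}$ to $\Gamma^{2}\ot^{H}\Gamma^{1}$. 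To upgrade this to a braided equivalence, I would verify that the braiding $\tilde{\sigma}$ of the $\ot^{H}$-structure is the reverse of the braiding $\sigma$ of the $\ot_{H}$-structure. By Corollary \ref{cor. left bivariant bimodule} every object is generated by its invariant subspace, so it is enough to compare the two braidings on invariant generators, where $\sigma(\eta^{1}\ot_{H}\eta^{2})=\eta^{2}\rz\ot_{H}\eta^{1}\bra\eta^{2}\ro$ and the analogous reduced form of $\tilde{\sigma}$ should agree after swapping the tensor factors.

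The hard part will be exactly this braiding comparison: although $\sigma$ and $\tilde{\sigma}$ are both assembled from the same antipode and translation maps of $(\cL, \cR, S)$, identifying them requires careful bookkeeping of the Sweedler indices and of the identities for $X_{\pm}$, $X_{[\pm]}$ and their right-handed counterparts collected in the appendix. Once this identity is settled on invariant generators, composing the identity anti-monoidal braided equivalence with the braided equivalences recalled in the first step yields the asserted anti-monoidal braided equivalences, pairing $({}^{\cH}_{\cL}\mathcal{YD}, \ot_{B})$ and $(\mathcal{YD}^{\cH}_{\cR}, \ot_{A})$ with $({}^{\cH}\mathcal{YD}_{\cR}, \ot_{A})$ and $({}_{\cL}\mathcal{YD}^{\cH}, \ot_{B})$.
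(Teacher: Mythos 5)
Your overall route is the same as the paper's: both proofs reduce the corollary to the claim that $({}^{\cH}_{H}\mathcal{M}{}_{H}^{\cH}, \ot^{H})$ is the reverse monoidal category of $({}^{\cH}_{H}\mathcal{M}{}_{H}^{\cH}, \ot_{H})$ in a way compatible with the braidings, and then compose with the equivalences of Theorems \ref{thm. equivalence of monoidal category}, \ref{thm. equivalence of monoidal category1} and \ref{thm. equivalence of monoidal category2}. However, the key identity you propose to verify is false: you expect $\tilde{\sigma}$ to agree with $\sigma$ after swapping tensor factors, i.e. $\textrm{flip}\circ\tilde{\sigma}\circ\textrm{flip}=\sigma$. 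Expanding through the antipode shows this cannot hold. The braiding $\sigma$ of Theorem \ref{thm. braided Hopf bimodule category} is built from $S$, since
\begin{equation*}
\rho^{1}\mo{}_{+}\ot_{B^{op}}\rho^{1}\mo{}_{-}=\rho^{1}\mo\ro\ot_{B^{op}}S(\rho^{1}\mo\rt),\qquad \rho^{2}\ro\m\ot_{A^{op}}\rho^{2}\ro\p=S(\rho^{2}\ro\o)\ot_{A^{op}}\rho^{2}\ro\t,
\end{equation*}
whereas $\textrm{flip}\circ\tilde{\sigma}\circ\textrm{flip}$ is built from $S^{-1}$, since $\tilde{\sigma}$ of Theorem \ref{thm. braided Hopf bimodule category1} uses $X_{[-]}\ot^{B^{op}}X_{[+]}=S^{-1}(X\ro)\ot^{B^{op}}X\rt$ and $X\np\ot^{A^{op}}X\nm=X\o\ot^{A^{op}}S^{-1}(X\t)$. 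In fact $\textrm{flip}\circ\tilde{\sigma}\circ\textrm{flip}$ equals $\sigma^{-1}$, so your identity would force $\sigma=\sigma^{-1}$, i.e.\ the braiding to be a symmetry, which fails already for Hopf bimodules over an ordinary non-cocommutative Hopf algebra. The statement that is true, and that the paper's proof uses, pairs the braiding of one structure with the \emph{inverse} braiding of the other,
\begin{equation*}
\sigma(\rho^{1}\ot_{H}\rho^{2})=\textrm{flip}\circ \tilde{\sigma}^{-1}(\rho^{2}\ot^{H}\rho^{1}),
\end{equation*}
which is checked on general elements: expanding $\tilde{\sigma}^{-1}$ and using $S(X)\o\ot_{B}S(X)\t=S(X\rt)\ot_{B}S(X\ro)$ and $S(X)\ro\ot_{A}S(X)\rt=S(X\t)\ot_{A}S(X\o)$, both sides become $\rho^{1}\mo\ro\la\rho^{2}\rz\ra S(\rho^{2}\ro\o)\ot_{H}S(\rho^{1}\mo\rt)\la\rho^{1}\z\ra\rho^{2}\ro\t$.

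A secondary flaw in your verification plan: the reduced formula $\sigma(\eta^{1}\ot_{H}\eta^{2})=\eta^{2}\rz\ot_{H}\eta^{1}\bra\eta^{2}\ro$ is valid on the left coinvariants ${}^{co\cL}\Gamma^{i}$, while the reduced formula for $\tilde{\sigma}$ coming from Theorems \ref{thm. category equivalence1} and \ref{thm. equivalence of monoidal category2} is valid on the right coinvariants $\Gamma^{co\cL}$; these are different subspaces with different induced actions, so "comparing the two reduced forms on invariant generators" does not directly make sense, and the comparison is cleanest on general elements via the appendix identities, as above. Once your identity is replaced by the correct one relating $\sigma$ to $\tilde{\sigma}^{-1}$, the rest of your argument (the identity functor as the anti-monoidal equivalence, composed with the previously established equivalences) does go through and reproduces the paper's proof.
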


\begin{proof}
     It is not hard to see $({}^{\cH}_{H}\mathcal{M}{}_{H}^{\cH}, \ot^{H})$ is the reverse monoidal category of $({}^{\cH}_{H}\mathcal{M}{}_{H}^{\cH}, \ot_{H})$ that preserve the braiding structure. More precisely,
    \[\sigma(\rho^{1}\ot_{H}\rho^{2})=\textrm{flip}\circ \tilde{\sigma}^{-1}(\rho^{2}\ot^{H}\rho^{1}).\]
    So we have the result.
\end{proof}

\section{Dual pairings between Yetter-Drinfeld modules}\label{sec5}

By \cite{schau1}, given two left bialgebroids, one can define a dual pairing.

\begin{defi}\label{def. dual pairing for left bialgeboids}
    Let $\cL$ and $\Pi$ be two left bialgebroids over $B$, a dual pairing between $\cL$ and $\Pi$ is a linear map $[\bullet |\bullet] :\cL\ot\Pi\to B$ such that:
    \begin{itemize}
        \item $[s_{L}(a)t_{L}(b)X s_{L}(c)t_{L}(d)| \alpha]f=a[X|s_{L}(c)t_{L}(f)\alpha s_{L}(d)t_{L}(b)]$,
        \item  $[X|\alpha\beta]=[X\o|\alpha t_{L}[X\t|\beta]]=[t_{L}[X\t|\beta]X\o|\alpha ]$,
        \item $[XY|\alpha]=[X s_{L}[Y|\alpha\o]|\alpha\t]=[X |s_{L}[Y|\alpha\o]\alpha\t]$,
        \item $[X|1]=\varepsilon(X)$,
        \item $[1|\alpha]=\varepsilon(\alpha)$,
    \end{itemize}
    for all $a, b, c, d, f\in B$, $\alpha, \beta\in \Pi$ and $X, Y\in \cL$.
\end{defi}

Similarly,

\begin{defi}\label{def. dual pairing for right bialgeboids}
     Let $\cR$ and $\Omega$ be two right bialgebroids over $A$, a dual pairing between $\Omega$ and $\cR$ is a linear map $(\bullet|\bullet):\Omega\ot \cR\to A$ such that
     \begin{itemize}
         \item $(s_{R}(a)t_{R}(b)\alpha s_{R}(c)t_{R}(d)| X)f=d(\alpha|s_{R}(c)t_{R}(a)X s_{R}(f)t_{R}(b))$,
         \item $(\alpha\beta|X)=(\beta|X\rt t_{R}(\alpha|X\ro))=(t_{R}(\alpha|X\ro)\beta|X\rt)$,
         \item $(\alpha|XY)=(\alpha\ro s_{R}(\alpha\rt|X)|Y)=(\alpha\ro |s_{R}(\alpha\rt|X)Y)$,
         \item $(\alpha|1)=\varepsilon(\alpha)$,
         \item $(1|X)=\varepsilon(X)$.
     \end{itemize}
\end{defi}

\begin{prop}
    Let $(\cL, \cR, S)$ and $(\Pi, \Omega, S)$ be Hopf algebroids, the existence of a dual pairing between $\cL$ and $\Pi$ is equivalent to the existence of a dual pairing between $\Omega$ and $\cR$.
\end{prop}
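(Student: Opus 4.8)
The plan is to turn the antipode of the Hopf algebroid into an explicit, mutually inverse dictionary between the two notions of pairing. Write $S$ for the antipode of $(\cL,\cR,S)$ in the first slot and of $(\Pi,\Omega,S)$ in the second slot. Given a dual pairing $[\bullet|\bullet]:\cL\ot\Pi\to B$ as in Definition \ref{def. dual pairing for left bialgeboids}, I would define $(\bullet|\bullet):\Omega\ot\cR\to A$ by
\[(\alpha|X):=[S(X)\,|\,S(\alpha)],\]
where the right-hand side, a priori an element of $B$, is read as an element of $A=B^{op}$. The candidate inverse assignment sends a right pairing to $[X|\alpha]:=(S^{-1}(\alpha)\,|\,S^{-1}(X))$; since both $(\cL,\cR,S)$ and $(\Pi,\Omega,S)$ have invertible antipodes, these two assignments are manifestly mutually inverse. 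Hence it suffices to check that the first assignment lands in the set of right dual pairings (Definition \ref{def. dual pairing for right bialgeboids}); that the second lands in the set of left dual pairings then follows by the same computation with $S^{-1}$ in place of $S$ and the roles of left and right interchanged.

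The counit axioms are immediate. Using $\varepsilon_R=\varepsilon_L\circ S$ (see (\ref{equ. left counit to right counit})) and $S(1)=1$ one gets $(\alpha|1)=[1|S(\alpha)]=\varepsilon_\Pi(S(\alpha))=\varepsilon_\Omega(\alpha)$ and $(1|X)=[S(X)|1]=\varepsilon_\cL(S(X))=\varepsilon_\cR(X)$, which are axioms (iv) and (v). For the source/target axiom (i), I would substitute the conversions (\ref{equ. left source target maps to right source and target maps}), i.e. $s_R=t_L$ and $t_R=S^{-1}\circ t_L$, which give $S\circ t_R=t_L$ and $S\circ s_R=S\circ t_L=s_L$. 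Since $S$ is an anti-algebra map, applying $S$ to a word $s_R(a)t_R(b)\,\alpha\,s_R(c)t_R(d)$ reverses it into an $s_L,t_L$-word around $S(\alpha)$, and the passage from $B$ to $B^{op}=A$ reverses the side on which the scalars $a,\dots,f$ act; together these two reversals produce exactly the permutation of $(a,b,c,d,f)$ relating axiom (i) of Definition \ref{def. dual pairing for left bialgeboids} to axiom (i) of Definition \ref{def. dual pairing for right bialgeboids}.

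The heart of the matter is the two multiplicativity axioms (ii) and (iii), and this is where I expect the real work. For (ii) I would compute $(\alpha\beta|X)=[S(X)|S(\beta)S(\alpha)]$ using anti-multiplicativity of $S$, apply axiom (ii) of the left pairing to the pair $(S(\beta),S(\alpha))$, and then convert the resulting $\Delta_L$ of $S(X)$ back into $\Delta_R$ of $X$ by means of the antipode--coproduct compatibility $S(X)\o\ot_B S(X)\t=S(X\rt)\ot_B S(X\ro)$ recorded after (\ref{equ. left counit to right counit}); combined with $S\circ t_R=t_L$ this reproduces $(t_R(\alpha|X\ro)\beta|X\rt)$, the required form. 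Axiom (iii) is dual: one splits $(\alpha|XY)=[S(Y)S(X)|S(\alpha)]$, applies axiom (iii) of the left pairing, rewrites $S(\alpha)\o\ot_B S(\alpha)\t=S(\alpha\rt)\ot_B S(\alpha\ro)$, and uses $S\circ s_R=s_L$ to land on $(\alpha\ro|s_R(\alpha\rt|X)Y)$.

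The main obstacle is precisely this coproduct bookkeeping: the right coproduct $\Delta_R$ is built from $\Delta_L$ by applying $S$ and swapping the two legs (see (\ref{equ. left coproduct to right coproduct})), while anti-multiplicativity of $S$ independently swaps the two tensor factors of a product, and the identification $A=B^{op}$ reverses scalar multiplication once more. One must check that these three sources of order reversal are mutually consistent, so that after all substitutions each Sweedler leg carries the correct source or target map and sits on the correct side; once they are aligned, all five axioms transfer and the two mutually inverse assignments give the claimed equivalence.
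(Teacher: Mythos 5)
Your proposal is correct and is essentially the paper's own proof: the paper uses exactly your two formulas, $[X|\alpha]:=(S^{-1}(\alpha)|S^{-1}(X))$ and $(\alpha|X):=[S(X)|S(\alpha)]$, verifying the axioms via anti-multiplicativity of $S$, the conversions $S\circ t_{R}=t_{L}$, $S\circ s_{R}=s_{L}$, the coproduct flip $S(X)\o\ot_{B}S(X)\t=S(X\rt)\ot_{B}S(X\ro)$, and $\varepsilon_{R}=\varepsilon_{L}\circ S$. The only cosmetic difference is that the paper carries out the detailed check for the direction built from $S^{-1}$ and dismisses the other by symmetry, whereas you do the reverse.
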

\begin{proof}
    Let $(\bullet|\bullet)$ be a dual pairing between $\Omega$ and $\cR$, then we can define
    \[[X|\alpha]:=(S^{-1}(\alpha)|S^{-1}(X)),\]
    for any $X\in \cL$ and $\alpha\in \Pi$.
    We can see
    \begin{align*}
        [s_{L}(f)t_{L}(d)Xs_{L}(b)t_{L}(a)|\alpha]\cdot_{B}c=&c\cdot_{A}(S^{-1}(\alpha)|s_{R}(b)t_{R}(a)S^{-1}(X)s_{R}(f)t_{R}(d))\\
        =&(s_{R}(a)t_{R}(d)S^{-1}(\alpha)s_{R}(b)t_{R}(c)|S^{-1}(X))\cdot_{A}f\\
        =&(S^{-1}(s_{L}(b)t_{L}(c)\alpha s_{L}(a)t_{L}(d))|S^{-1}(X))\cdot_{A}f\\
        =&f\cdot_{B}[X|s_{L}(b)t_{L}(c)\alpha s_{L}(a)t_{L}(d)].
    \end{align*}
    And
    \begin{align*}
        [X|\alpha\beta]=&(S^{-1}(\beta)S^{-1}(\alpha)|S^{-1}(X))\\
        =&(S^{-1}(\alpha)|S^{-1}(X)\rt t_{R}(S^{-1}(\beta)|S^{-1}(X)\ro))\\
        =&(S^{-1}(\alpha)|S^{-1}(t_{L}(S^{-1}(\beta)|S^{-1}(X\t)) X\o ))\\
        =&[t_{L}[X\t|\beta]X\o|\alpha].
    \end{align*}
    By the same method, we can get $[XY|\alpha]=[X s_{L}[Y|\alpha\o]|\alpha\t]=[X |s_{L}[Y|\alpha\o]\alpha\t]$. Moreover, $[X|1]=\varepsilon_{R}(S^{-1}(X))=\varepsilon_{L}(X)$, $[1|\alpha]=\varepsilon_{R}(S^{-1}(\alpha))=\varepsilon_{L}(\alpha)$. Similarly, if there is a dual pairing between the corresponding left bialgebroids, then we can define a dual pairing between $\Omega$ and $\cR$ by
    \[(\alpha|X):=[S(X)|S(\alpha)].\]
\end{proof}

\begin{defi}
    A dual pairing between two Hopf algebroids $(\cL, \cR, S)$ and $(\Pi, \Omega, S)$ is a dual pairing between the corresponding left bialgebroids $\cL$ and $\Pi$. Or equivalently, a dual pairing between two Hopf algebroids $(\cL, \cR, S)$ and $(\Pi, \Omega, S)$ is a dual pairing between the corresponding right bialgebroids $\Omega$ and $\cR$.
\end{defi}
For a left bialgebroid $\cL$ over $B$, we denote the collection of right $B$-module maps from $\cL$ to $B$ by $\cL^{\vee}$, namely, $\cL^{\vee}=\{\alpha:\cL\to B| \alpha(t_{L}(b)X)=\alpha(X)b, \forall X\in\cL, b\in B\}$. Similarly,  we denote the collection of left $B$-module maps from $\cL$ to $B$ by ${}^{\vee}\cL=\{\beta:\cL\to B| \beta(s_{L}(b)X)=b \beta(X), \forall X\in\cL, b\in B\}$. For a right bialgebroid $\cR$ over $A$, we denote the collection of right (resp. left) $A$-module maps from $\cR$ to $A$ by $\cR^{\vee}$ (resp. ${}^{\vee}\cR$). By \cite{schau1}, if $\cL$ is a left bialgebroid, then $\cL^{\vee}$ and ${}^{\vee}\cL$ are $B^{e}$ rings with the $B^{e}$-bimodule structure given by the dual pairing above. Similarly, $\cR^{\vee}$ and ${}^{\vee}\cR$ are $A^{e}$ ring for any right algebroid $\cR$. More precisely, let $\alpha\in \cR^{\vee}$ and $X\in\cR$, then
\[(s_{R}(a)t_{R}(b)\alpha s_{R}(c)t_{R}(d))( X)=d\alpha(s_{R}(c)t_{R}(a)X t_{R}(b)).\]

\begin{defi}
    A right  $B$-module $M$ is called a right  finitely generated projective $B$-module if there is
$\{x_{i}\}_{i}\in M$ and $\{x^{i}\}_{i}\in M^{\vee}$  (where $M^{\vee}$ is the collection of right module maps from
$M$ to $B$ equipped with the left $B$-module structure given by $(b.f)(m)=bf(m)$ for any $f\in M^{\vee}$, $m\in M$ and $b\in B$), such that for any $X\in M$, $X=x_{i}.(x^{i}(X))$. A left  $B$-module $N$ is called a left  finitely generated projective $B$-module if  there is
$\{y_{i}\}_{i}\in N$ and $\{y^{i}\}_{i}\in {}^{\vee}N$ (the collection of left module map from $N$ to $B$), such that for any $Y\in N$, $Y=(y^{i}(Y)).y_{i}$.
\end{defi}

By assuming the $B$-bimodule structures of left bialgebroids given by (\ref{eq:rbgd.bimod}) and the $A$-bimodule structures of right bialgebroids given by (\ref{eq:rbgd.bimod1}), then by \cite{schau1,schau3}, we have

\begin{thm}
    Let $\Pi$ (resp. $\cL$) be a right (resp. left) finitely generated projective left bialgebroid, then $\Pi^{\vee}$ (resp. ${}^{\vee}\cL$) is a left (resp. right)  finitely generated projective left bialgebroid.

    Let $\cR$ (resp. $\Omega$) be a right (resp. left) finitely generated projective right bialgebroid, then $\cR^{\vee}$ (resp. ${}^{\vee}\Omega$) is a left (resp. right) finitely generated projective right bialgebroid.
\end{thm}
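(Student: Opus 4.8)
The ring structure on $\Pi^{\vee}$ is already available from \cite{schau1} (as recalled just above), so the plan is to produce the remaining coring data and the projectivity statement by dualizing the corresponding structures of $\Pi$ against the evaluation pairing $[X|\alpha]:=\alpha(X)$, which one checks satisfies the axioms of Definition \ref{def. dual pairing for left bialgeboids}. I would first use right finite projectivity to set up the comultiplication: fix the dual basis $\{x_i\}\subset\Pi$, $\{x^i\}\subset\Pi^{\vee}$ with $X=x_i.(x^i(X))$ for all $X\in\Pi$. Right finite projectivity yields a natural $B$-bimodule isomorphism $\Pi^{\vee}\ot_B\Pi^{\vee}\cong(\Pi\ot_B\Pi)^{\vee}$, under which the multiplication $m_\Pi\colon\Pi\ot_B\Pi\to\Pi$ (a $B$-bimodule map) transposes to a coproduct $\Delta_{\Pi^\vee}\colon\Pi^\vee\to\Pi^\vee\ot_B\Pi^\vee$; concretely $\Delta_{\Pi^\vee}(\alpha)=\alpha\o\ot_B\alpha\t$ is the unique element characterised by the third pairing axiom $[XY|\alpha]=[Xs_{L}[Y|\alpha\o]|\alpha\t]$, i.e. $\alpha(XY)=\alpha\t(Xs_{L}(\alpha\o(Y)))$, which the dual basis makes explicit. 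Dually the unit $B\to\Pi$ transposes to the counit $\varepsilon_{\Pi^\vee}(\alpha)=\alpha(1_\Pi)$, matching $[1|\alpha]=\varepsilon(\alpha)$.

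Next I would verify the left $B$-bialgebroid axioms of Definition \ref{def:left.bgd} for $\Pi^\vee$. Coassociativity and counitality of $\Delta_{\Pi^\vee}$ follow by transposition from associativity and unitality of $m_\Pi$. The source and target maps of $\Pi^\vee$ are read off from the first pairing axiom, and the remaining compatibilities — that $\Delta_{\Pi^\vee}$ corestricts to an algebra map into the Takeuchi product $\Pi^\vee\times_B\Pi^\vee$, and that $\varepsilon_{\Pi^\vee}$ is a left character — are obtained by dualizing the bialgebroid compatibility of $\Pi$: the statement that $\Delta_\Pi$ is an algebra map into $\Pi\times_B\Pi$ transposes precisely to the statement that the product $m_{\Pi^\vee}$ is compatible with $\Delta_{\Pi^\vee}$ (via the second pairing axiom), and the two counit conditions mirror each other through the pairing. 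Throughout, the check that $\Delta_{\Pi^\vee}(\alpha)$ actually lands in the Takeuchi subspace uses the $B$-balancing forced by $\alpha\in\Pi^\vee$.

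Finally, for left finite projectivity of $\Pi^\vee$ I would exhibit the same data with roles reversed: $\{x^i\}\subset\Pi^\vee$ as generators and $\{x_i\}$, regarded inside ${}^\vee(\Pi^\vee)\cong\Pi$ (reflexivity of finitely generated projective modules), as the dual family, the latter acting by evaluation $\alpha\mapsto\alpha(x_i)$. The identity $\alpha=(\alpha(x_i)).x^i$ is then immediate from $X=x_i.(x^i(X))$ together with $\alpha(X.b)=\alpha(X)b$, since $\alpha(X)=\alpha(x_i)\,x^i(X)$. The statements for ${}^\vee\cL$ and for the right bialgebroids $\cR^\vee,{}^\vee\Omega$ are proved the same way; the right-bialgebroid case is the mirror image obtained by passing to the opposite base algebra, equivalently by using the pairing of Definition \ref{def. dual pairing for right bialgeboids} in place of that of Definition \ref{def. dual pairing for left bialgeboids}.

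The main obstacle I anticipate is the bookkeeping of the several $B$-module structures — both the left (via $s_{L}$) and right (via $t_{L}$) actions of \eqref{eq:rbgd.bimod}, together with the extra $B^e$-structure on $\Pi^\vee$ — and ensuring that transposition correctly interchanges source with target so that $\Delta_{\Pi^\vee}$ genuinely lands in $\Pi^\vee\times_B\Pi^\vee$. Establishing the isomorphism $\Pi^\vee\ot_B\Pi^\vee\cong(\Pi\ot_B\Pi)^\vee$ over the noncommutative base $B$ (valid precisely because one tensor factor is finitely generated projective) is the technical crux on which the whole construction rests.
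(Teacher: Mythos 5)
Two preliminary remarks. First, the paper does not actually prove this theorem: it is imported from \cite{schau1,schau3}, and the surrounding text only records the structure maps that the pairing axioms of Definitions \ref{def. dual pairing for left bialgeboids} and \ref{def. dual pairing for right bialgeboids} determine (for instance the product and coproduct of $\cR^{\vee}$). Second, your overall architecture --- transpose the structures of $\Pi$ through the evaluation pairing, use finite projectivity for the identification of $\Pi^{\vee}\ot_{B}\Pi^{\vee}$ with a dual of the tensor square, and get finite projectivity of the dual by the dual-basis flip --- is exactly the architecture of the cited construction, and your final flip $\alpha=(\alpha(x_{i})).x^{i}$ is correct, because in the paper's conventions the source map of $\Pi^{\vee}$ acts by $(s_{\Pi^{\vee}}(b)\alpha)(X)=b\,\alpha(X)$.

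There is, however, a genuine error at the heart of the construction: you have put $\Pi^{\vee}$ in the wrong slot of the pairing, and the coproduct you specify need not exist under your hypothesis. You pair $[X|\alpha]=\alpha(X)$ with $\alpha\in\Pi^{\vee}$ in the \emph{second} slot. The $a$-component of the first axiom of Definition \ref{def. dual pairing for left bialgeboids} then demands $[s_{L}(a)X|\alpha]=a[X|\alpha]$, i.e. $\alpha(s_{L}(a)X)=a\,\alpha(X)$; this says $\alpha\in{}^{\vee}\Pi$, which a general element of $\Pi^{\vee}$ (defined only by $\alpha(t_{L}(b)X)=\alpha(X)b$) does not satisfy, so your claim that the evaluation pairing ``satisfies the axioms'' already fails there. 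Consequently the formula you adopt as the definition of the coproduct, $\alpha(XY)=\alpha\t(Xs_{L}(\alpha\o(Y)))$ (axiom (iii) of that definition), is the one appropriate to the \emph{left} dual ${}^{\vee}\cL$ of a \emph{left} finitely generated projective bialgebroid: producing such an element requires expanding through the source action, $X=s_{L}(y^{i}(X))y_{i}$, which is not available to you. Your actual hypothesis only yields $Y=t_{L}(x^{i}(Y))x_{i}$, and the element that this produces is the one characterized by axiom (ii) with $\Pi^{\vee}$ in the \emph{first} slot (where the $f$-component of the first axiom reproduces precisely the defining right-linearity of $\Pi^{\vee}$), namely
\[
[\alpha|XY]=[\alpha\o|X\,t_{L}([\alpha\t|Y])],\qquad
\Delta_{\Pi^{\vee}}(\alpha)=\sum_{i}\alpha(-\,x_{i})\ot_{B}x^{i},
\]
where $\alpha(-\,x_{i})$ denotes the functional $X\mapsto\alpha(Xx_{i})$; indeed $\sum_{i}\alpha(Xt_{L}(x^{i}(Y))x_{i})=\alpha(XY)$. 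This is exactly the orientation of the formulas the paper records for $\cR^{\vee}$, where the coproduct is determined by $\alpha(XY)=\alpha\ro(s_{R}(\alpha\rt(X))Y)$ and existence comes from expanding the left factor $X=x_{i}s_{R}(x^{i}(X))$. So your strategy is salvageable, but as written the coring structure on $\Pi^{\vee}$ --- and every axiom you then verify ``by transposition'' from it --- rests on a pairing identity that is false for $\Pi^{\vee}$ and on a coproduct whose existence your hypothesis does not provide.
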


The left (resp. right) bialgebroid structures on $\Pi^{\vee}$,  ${}^{\vee}\cL$, $\cR^{\vee}$ and ${}^{\vee}\Omega$ can be given by Definitions \ref{def. dual pairing for left bialgeboids} and \ref{def. dual pairing for right bialgeboids}. For example, the product of $\cR^{\vee}$ is given by
\[\alpha\beta(X)=\beta(X\rt t_{R}(\alpha(X\ro))),\]
and the coproduct on $\cR^{\vee}$ is determined by

\[\alpha(XY)=\alpha\ro (s_{R}(\alpha\rt(X))Y),\]
for any $X, Y\in\cR$ and $\alpha, \beta\in\cR^{\vee}$. In the following, we will always use notation $(\alpha|X):=\alpha(X)$, for any $X\in \cR$ and $\alpha\in \cR^{\vee}$.

\begin{prop}
    Let $(\cL, \cR, S)$ be a Hopf algebroid. Then $\cR$ is a right (resp. left) finitely generated projective $A$-module if and only if $\cL$ is a left (resp. right) finitely generated projective $A$-module.
\end{prop}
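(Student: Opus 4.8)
The whole statement will come from a single observation about the antipode. Recall that $\cL$ and $\cR$ share the underlying algebra $H$, that $S\colon H\to H$ is a bijective anti-algebra map, and that by construction $S\circ t_{L}=s_{L}$, while the induced right structure satisfies $s_{R}=t_{L}$ and $t_{R}=S^{-1}\circ t_{L}$ by \eqref{equ. left source target maps to right source and target maps}. The plan is to show that $S$ is itself the isomorphism transporting finitely generated projective generators between the two module structures; the key point is that an anti-algebra map turns a right action into a left action while sending source to source and target to target.

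First I would record the elementary identities $S(s_{R}(a))=S(t_{L}(a))=s_{L}(a)$ and $S(t_{R}(a))=S(S^{-1}(t_{L}(a)))=t_{L}(a)$, and then use anti-multiplicativity together with the bimodule structures \eqref{eq:rbgd.bimod} and \eqref{eq:rbgd.bimod1} to obtain
\[
S(X s_{R}(a))=s_{L}(a) S(X),\qquad S(X t_{R}(a))=t_{L}(a) S(X),
\]
for all $X\in H$, $a\in A$. The first identity exhibits $S$ as a linear bijection intertwining the right $A$-action $X\cdot a=X s_{R}(a)$ on $\cR$ with the left $B$-action $a\triangleright Y=s_{L}(a) Y$ on $\cL$; the second intertwines the left $A$-action $a\cdot X=X t_{R}(a)$ on $\cR$ with the right $B$-action $Y\triangleleft a=t_{L}(a) Y$ on $\cL$.

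For the first equivalence I would then transport a dual basis. If $\cR$ is right finitely generated projective, with $\{x_{i}\}\subset\cR$ and right $A$-module maps $\{x^{i}\}\subset\cR^{\vee}$ such that $X=\sum_{i}x_{i} s_{R}(x^{i}(X))$, then applying $S$ and using the first intertwining identity gives $Y=\sum_{i}s_{L}(y^{i}(Y)) y_{i}$ for all $Y\in\cL$, where $y_{i}:=S(x_{i})$ and $y^{i}:=x^{i}\circ S^{-1}$. One checks that each $y^{i}$ lies in ${}^{\vee}\cL$, i.e.\ is a left $B$-module map, using $S^{-1}(s_{L}(b))=t_{L}(b)=s_{R}(b)$ together with the right $A$-linearity of $x^{i}$; this is precisely the data exhibiting $\cL$ as left finitely generated projective. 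The converse is identical with $S^{-1}$ in place of $S$, and the ``resp.'' (left $\leftrightarrow$ right) case is proved in the same way starting from the second intertwining identity.

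I do not expect a genuine conceptual obstacle here: the content is entirely carried by the two intertwining identities. The only point that requires care is the $A=B^{op}$ bookkeeping, namely verifying that the transported functionals $x^{i}\circ S^{-1}$ are module maps for the correct (opposite) side and land in ${}^{\vee}\cL$ rather than $\cL^{\vee}$ (and symmetrically in the remaining cases). Once this is kept straight, the dual-basis transport goes through verbatim.
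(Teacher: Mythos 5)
Your proposal is correct and is essentially identical to the paper's proof: the paper also transports the dual basis via the antipode, defining $\phi(\alpha)=\alpha\circ S^{-1}$ (your $y^{i}=x^{i}\circ S^{-1}$) and $y_{i}=S(x_{i})$, and verifying $s_{L}\bigl([X|y^{i}]\bigr)y_{i}=S\bigl(x_{i}s_{R}(x^{i}|S^{-1}(X))\bigr)=X$ exactly as you do via anti-multiplicativity and $S\circ t_{L}=s_{L}$, $s_{R}=t_{L}$. The remaining cases are likewise dismissed in the paper as ``similarly proved,'' matching your treatment of the converse and the left/right variant.
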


\begin{proof}
    As $\cR$ is a right finitely generated projective $A$-module, then there is
$\{x_{i}\}_{i}\in \cR$ and $\{x^{i}\}_{i}\in \cR^{\vee}$ , such that for any $X\in \cR$, $X=x_{i}s_{R}(x^{i}(X))$.
We can define a map $\phi: \cR^{\vee}\to {}^{\vee}\cL$ by
\[[X|\phi(\alpha)]:=(\alpha|S^{-1}(X)),\]
for any $X\in\cR$, $\alpha\in\cR^{\vee}$. Indeed,
\begin{align*}
    [s_{L}(b)X|\phi(\alpha)]=(\alpha|S^{-1}(X)s_{R}(b))=b\cdot_{B}[X|\alpha].
\end{align*}

Define $y^{i}:=\phi(x^{i})$ and $y_{i}:=S(x_{i})$, then we can see
\begin{align*}
    s_{L}[X|y^{i}]y_{i}=s_{L}(x^{i}|S^{-1}(X))y_{i}=S(x_{i}s_{R}(x^{i}|S^{-1}(X)))=X.
\end{align*}

The rest of the statement can be similarly proved.

\end{proof}


\begin{defi}
    Let $\cR$ and $\Omega$ be right bialgebroids, and $\Lambda\in\mathcal{YD}^{\cR}_{\cR}$, $T\in{}^{\Omega}\mathcal{YD}_{\Omega}$, a dual pairing between $T$ and $\Lambda$ is a linear map $(\bullet,\bullet): T\ot \Lambda\to A$ such that:
    \begin{itemize}
        \item There is a dual pairing $(\bullet|\bullet)$ between $\Omega$ and $\cR$,
        \item $(f\bra (s_{R}(a)t_{R}(b)), \eta)c=b(f, \eta\bra (s_{R}(c)t_{R}(a)))$,
        \item $(f\bra \alpha, \eta)=(t_{R}(f, \eta\rz)\alpha|\eta\ro)=(\alpha|\eta\ro t_{R}(f, \eta\rz))$,
        \item $(f, \eta\bra X)=(f\rmo|s_{R}(f\rz,\eta)X)=(f\rmo s_{R}(f\rz,\eta)|X)$,
    \end{itemize}
\end{defi}

\begin{thm}\label{thm. equivalent of left and right Yetter-Drinfeld modules}
Let $\cR$
be a right bialgebroid and $\Lambda\in\mathcal{YD}^{\cR}_{\cR}$, if $\cR$ and $\Lambda$ are both right finitely generated projective $A$-module, then $\Lambda^{\vee}\in {}^{\cR^{\vee}}\mathcal{YD}_{\cR^{\vee}}$, and it is a left finitely generated projective $A$-module.\\
Similarly, let $\Omega$
be a right bialgebroid and $T\in {}^{\Omega}\mathcal{YD}_{\Omega}$, if $\Omega$ and $T$ are left finitely generated projective $A$-module, then ${}^{\vee}T\in \mathcal{YD}^{{}^{\vee}\Omega}_{{}^{\vee}\Omega}$, and it is a right finitely generated projective $A$-module.\\
Moreover, $\Lambda\simeq {}^{\vee}(\Lambda^{\vee})$ and $T\simeq ({}^{\vee}T)^{\vee}$ as Yetter-Drinfeld modules.
\end{thm}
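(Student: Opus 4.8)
The plan is to present $\Lambda^{\vee}$ as the dual of $\Lambda$ with respect to the canonical evaluation pairing, and to deduce the left-right Yetter-Drinfeld axioms of $\Lambda^{\vee}$ from the right-right axioms of $\Lambda$ by transposition. Since $\cR$ and $\Lambda$ are right finitely generated projective $A$-modules, I first fix dual bases $\{X_{i}\}\subset\cR$, $\{X^{i}\}\subset\cR^{\vee}$ and $\{e_{j}\}\subset\Lambda$, $\{e^{j}\}\subset\Lambda^{\vee}$, so that $X=X_{i}s_{R}(X^{i}(X))$ and $\eta=e_{j}\bra s_{R}(e^{j}(\eta))$ for all $X\in\cR$ and $\eta\in\Lambda$. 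By the duality theorem recalled above, $\cR^{\vee}$ is again a right bialgebroid over $A$ and $(\alpha|X):=\alpha(X)$ is a dual pairing between $\cR^{\vee}$ and $\cR$ in the sense of Definition \ref{def. dual pairing for right bialgeboids}; I use the evaluation $(f,\eta):=f(\eta)$ as the pairing between $\Lambda^{\vee}$ and $\Lambda$.

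Next I endow $\Lambda^{\vee}$ with the structure maps forced by this duality. The right $\cR^{\vee}$-action and the left $\cR^{\vee}$-coaction are defined by
\[(f\bra\alpha)(\eta):=\alpha\big(\eta\ro\,t_{R}(f(\eta\rz))\big),\qquad {}_{R}\delta(f):=\phi_{j}\ot_{A}e^{j},\qquad \phi_{j}(X):=f(e_{j}\bra X),\]
for $f\in\Lambda^{\vee}$, $\alpha\in\cR^{\vee}$, $\eta\in\Lambda$, $X\in\cR$. A short computation using that $f$ is right $A$-linear and that $s_{R},t_{R}$ have commuting images shows $f\bra\alpha\in\Lambda^{\vee}$ and $\phi_{j}\in\cR^{\vee}$; moreover these are precisely the maps characterized by the dual-pairing identities $(f\bra\alpha,\eta)=(\alpha|\eta\ro t_{R}(f,\eta\rz))$ and $(f,\eta\bra X)=(f\rmo s_{R}(f\rz,\eta)|X)$, so in particular ${}_{R}\delta$ is independent of the chosen dual basis. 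I would then check the routine points: that ${}_{R}\delta$ is coassociative and counital, that its image lies in the Takeuchi product $\cR^{\vee}\times_{A}\Lambda^{\vee}$, and that $f\bra s_{R}(a)=f_{*}a$ and $f\bra t_{R}(a)=a_{*}f$ relate the $\cR^{\vee}$-action to the $A$-bimodule structure of $\Lambda^{\vee}$; all of these follow from the pairing axioms of Definition \ref{def. dual pairing for right bialgeboids} together with the dual-basis relations.

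The heart of the proof is the left-right Yetter-Drinfeld compatibility of Definition \ref{def. left-right YD}, which for $f\in\Lambda^{\vee}$ and $\alpha\in\cR^{\vee}$ reads
\[\alpha\rt\,(f\bra\alpha\ro)\rmo\ot_{A}(f\bra\alpha\ro)\rz=f\rmo\alpha\ro\ot_{A}f\rz\bra\alpha\rt,\]
where the coproduct $\alpha\ro\ot_{A}\alpha\rt$ and the products are those of $\cR^{\vee}$. I would verify this by testing: apply the $\Lambda^{\vee}$-leg of each side to an arbitrary $\eta\in\Lambda$ and pair the $\cR^{\vee}$-leg with an arbitrary $Y\in\cR$. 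Because $\cR$ and $\Lambda$ are finitely generated projective, equality of the two tensors is equivalent to equality of all these scalars. Expanding each side with the explicit action and coaction above, with the pairing formula for products $(\alpha\beta|X)$ and for the dual coproduct $\alpha(XY)=\alpha\ro(s_{R}(\alpha\rt(X))Y)$, reduces the desired identity to the right-right Yetter-Drinfeld condition of $\Lambda$, namely $(\rho\bra X\rt)\rz\ot_{A}X\ro(\rho\bra X\rt)\ro=\rho\rz\bra X\ro\ot_{A}\rho\ro X\rt$ from Definition \ref{def. right-right YD}. I expect this transposition to be the main obstacle: the dual coproduct is given only implicitly, so the delicate part is to interleave its defining relation with the coaction of $\Lambda$ while keeping track of which Sweedler leg is paired with $Y$ and which is evaluated on $\eta$.

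Finally, $\Lambda^{\vee}$ is left finitely generated projective over $A$ because the transpose of the dual basis $\{e_{j}\},\{e^{j}\}$ serves as a left dual basis for $\Lambda^{\vee}$. The statement for $T\in{}^{\Omega}\mathcal{YD}_{\Omega}$ and ${}^{\vee}T$ follows from an entirely analogous argument in which left and right duals, and left and right module/comodule structures, are interchanged. For the biduality, the canonical evaluation $\eta\mapsto(f\mapsto f(\eta))$ is an isomorphism of $A$-modules since $\Lambda$ is finitely generated projective; applying the second part to $\Omega=\cR^{\vee}$ and $T=\Lambda^{\vee}$ yields ${}^{\vee}(\Lambda^{\vee})\in\mathcal{YD}^{{}^{\vee}(\cR^{\vee})}_{{}^{\vee}(\cR^{\vee})}$, and under the bialgebroid biduality ${}^{\vee}(\cR^{\vee})\cong\cR$ one checks that this evaluation intertwines the right-right Yetter-Drinfeld structures, giving $\Lambda\simeq{}^{\vee}(\Lambda^{\vee})$; the isomorphism $T\simeq({}^{\vee}T)^{\vee}$ is obtained symmetrically.
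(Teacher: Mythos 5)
Your proposal follows essentially the same route as the paper: the action $(f\bra\alpha,\eta)=(\alpha|\eta\ro t_{R}(f,\eta\rz))$ is the paper's definition verbatim, your basis-defined coaction $\phi_{j}\ot_{A}e^{j}$ with $\phi_{j}(X)=f(e_{j}\bra X)$ is exactly the paper's ${}_{R}\delta(f)=\psi^{-1}(\tilde f)$, and your ``testing'' strategy for the Yetter--Drinfeld compatibility (pairing the $\cR^{\vee}$-leg against $X$ and evaluating the $\Lambda^{\vee}$-leg at $\eta$, with injectivity guaranteed by finite generation and projectivity) is precisely the paper's computation through the map $\tilde\psi$, reducing to the right-right Yetter--Drinfeld condition of $\Lambda$. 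The remaining points (left finite generation and projectivity via the transposed dual basis, canonical biduality) also coincide with the paper's argument, so the proposal is correct and matches the paper's proof in substance.
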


\begin{proof}
    First, we can define the $A$-bimodule structure on $\Lambda^{\vee}$ by
    \[(a^{.}f^{.}a', \eta)=a(f, \eta\bra t_{R}(a')),\]
    for any $f\in \Lambda^{\vee}$, $\eta\in \Lambda$ and $a, a'\in A$, and we use notation $(f, \eta):=f(\eta)$.

    Second, we can define a map $\phi: \Lambda^{\vee}\ot_{A^{e}}\cR^{\vee}\to (\Lambda\times_{A}\cR)^{\vee}$ by
    \[\phi(f\ot_{A^{e}}\alpha)(\eta\ot_{A}X)=(t_{R}(f, \eta)\alpha|X)=(\alpha|Xt_{R}(f, \eta)),\]

    where $\Lambda\times_{A}\cR$ is the Takeuchi product, and the balanced tensor product $\ot_{A^{e}}$ reads $a^{.}f^{.}a'\ot_{A^{e}}\alpha=f\ot_{A^{e}}s_{R}(a')t_{R}(a)\alpha$.  Here we only show
    \[(t_{R}(f^{.}a, \eta)\alpha|X)=(t_{R}(f, \eta\bra t_{R}(a))\alpha|X)=(t_{R}(f, \eta )\alpha| t_{R}(a)X)=(t_{R}(f, \eta )s_{R}(a)\alpha|X ).\]
    It is not hard to see $\phi$ factors through all the rest balanced tensor products.
With the help of $\phi$, we can define the right action of $\cR^{\vee}$ on $\Lambda^{\vee}$ by
\begin{align*}
    (f\bra \alpha, \eta)=(t_{R}(f, \eta\rz)\alpha|\eta\ro)=(\alpha|\eta\ro t_{R}(f, \eta\rz)),
\end{align*}
it is not hard to check $f\bra(\alpha\beta)=(f\bra\alpha)\bra\beta$ since $\Lambda$ is a right $\cR$-comodule. We can also see $a^{.}f=f\bra t_{R}(a)$ and $f^{.}a=f\bra s_{R}(a)$:
\begin{align*}
    (f\bra s_{R}(a), \eta)=(f, \eta\rz)\varepsilon(t_{R}(a)\eta\ro)=(f, \eta\rz\bra t_{R}(a))\varepsilon(\eta\ro)=(f, \eta\bra t_{R}(a)),
\end{align*}
and
\begin{align*}
    (f\bra t_{R}(a), \eta)=a(f, \eta\rz)\varepsilon(\eta\ro)=a(f, \eta).
\end{align*}

Third, we can define a map: $\psi: \cR^{\vee}\times_{A}\Lambda^{\vee}\to (\Lambda\ot_{A^{e}}\cR)^{\vee}$ by
\begin{align*}
    \psi(\alpha\ot_{A}f)(\eta\ot_{A^{e}}X)=(\alpha|s_{R}(f,\eta)X)=(\alpha s_{R}(f,\eta)|X).
\end{align*}
Here we only show
\[(\alpha s_{R}(f,\eta\bra t_{R}(a))|X)=(\alpha s_{R}(f\bra s_{R}(a),\eta)|X)=(s_{R}(a)\alpha s_{R}(f,\eta)|X)=(\alpha s_{R}(f,\eta)|t_{R}(a)X).\]
It is not hard to see $\psi$ factor through all the rest balanced tensor products. As $\Lambda$ is a right finitely generated projective $A$-module, then for any $\eta\in \Lambda$, there is $\lambda_{i}\ot_{A}\lambda^{i}\in \Lambda\ot_{A}\Lambda^{\vee}$, such that for any $\eta\in \Lambda$, we have $\eta=\lambda_{i}\bra s_{R}(\lambda^{i},\eta)$.
We can see for any $f\in\Lambda^{\vee}$, $f=\lambda^{i}\bra t_{R}(f, \lambda_{i})$:
\[(\lambda^{i}\bra t_{R}(f, \lambda_{i}), \eta)=(\lambda^{i}, \eta\bra s_{R}(f, \lambda_{i}))=(f, \eta).\]
Moreover, $\lambda_{i}\ot_{A}\lambda^{i}$ satisfies $\lambda_{i} \bra t_{R}(a)\ot_{A}\lambda^{i}=\lambda_{i} \ot_{A}\lambda^{i}\bra s_{R}(a)$. Indeed, on the one hand,
\[\eta\bra t_{R}(a)=\lambda_{i}\bra s_{R}(\lambda^{i},\eta\bra t_{R}(a))=\lambda_{i}\bra s_{R}(\lambda^{i}\bra s_{R}(a),\eta),\]
on the other hand,

\[\eta\bra t_{R}(a)=(\lambda_{i}\bra t_{R}(a))\bra s_{R}(\lambda^{i},\eta).\]
Now, we can show $\psi$ is bijective, with its inverse given by
\begin{align*}
    \psi^{-1}(F):=F(\lambda_{i}\ot_{A^{e}}\bullet)\ot_{A}\lambda^{i},
\end{align*}
for any $F\in (\Lambda\ot_{A^{e}}\cR)^{\vee}$. We first observe that the image of $\psi^{-1}$ belongs to Takeuchi product:

\begin{align*}
   s_{R}(a)F(\lambda_{i}\ot_{A^{e}}\bullet)\ot_{A}\lambda^{i}=&F(\lambda_{i}\ot_{A^{e}}t_{R}(a)\bullet)\ot_{A}\lambda^{i} \\
   =&F(\lambda_{i}\bra t_{R}(a)\ot_{A^{e}}\bullet)\ot_{A}\lambda^{i} \\
   =&F(\lambda_{i}(a)\ot_{A^{e}}\bullet)\ot_{A}\lambda^{i} \bra s_{R}(a).
\end{align*}
 We can also see
 \begin{align*}
     \psi\circ\psi^{-1}(F)(\eta\ot_{A^{e}}X)=F(\lambda_{i}\ot_{A^{e}}s_{R}(\lambda^{i}, \eta)X)=F(\eta\ot_{A^{e}}X),
 \end{align*}
 and
 \begin{align*}
     \psi^{-1}\circ\psi(\alpha\ot_{A}f)=&\psi(\alpha\ot_{A}f)(\lambda_{i}\ot_{A^{e}}\bullet)\ot_{A}\lambda^{i}\\
     =&(\alpha s_{R}(f, \lambda_{i})|\bullet)\ot_{A}\lambda^{i}\\
     =&(\alpha |\bullet)\ot_{A}\lambda^{i}\bra t_{R}(f, \lambda_{i})\\
     =&\alpha\ot_{A}f.
 \end{align*}
For later use, we can define another bijective map $\tilde{\psi}: \cR^{\vee}\ot_{A}\Lambda^{\vee}\to (\Lambda\ot_{A_{s}}\cR)^{\vee}$ by
 \[\tilde{\psi}(\alpha\ot_{A}f)(\eta\ot_{A_{s}}X)=(\alpha|s_{R}(f,\eta)X)=(\alpha s_{R}(f,\eta)|X),\]
  where $\ot_{A_{s}}$ is the balanced tensor product induced by $s_{R}$.
 Let $f\in \Lambda^{\vee}$, define $\tilde{f}\in (\Lambda\ot_{A^{e}}\cR)^{\vee}$ by
\begin{align*}
    \tilde{f}(\eta\ot_{A^{e}}X)=f(\eta\bra X).
\end{align*}
The coaction of $f$ can be given through $\psi$ in the following
\begin{align}
   {}_{R}\delta(\eta):=\psi^{-1}(\tilde{f}).
\end{align}
It is not hard to see ${}_{R}\delta$ is a $A$-bilinear map.
We can also check
\begin{align*}
(f\rmo s_{R}(f\rz\rmo s_{R}(f\rz\rz, \eta)|X)|Y)=&(f, (\eta\bra X)\bra Y)
    =(f, \eta\bra(XY))\\
        =&(f\rmo\ro s_{R}(f\rmo\rt s_{R}(f\rz, \eta)|X)|Y),
\end{align*}
so ${}_{R}\delta$ is a well defined coaction. Finally, let's check the Yetter-Drinfeld condition. On the one hand
\begin{align*}
    \tilde{\psi}(f\rmo\alpha\ro\ot_{A}f\rz\bra\alpha\rt)(\eta\ot_{A_{s}}X)=&(f\rmo\alpha\ro|s_{R}(f\rz\bra\alpha\rt, \eta)X)\\
    =&(f\rmo\alpha\ro|s_{R}(\alpha\rt|\eta\ro t_{R}(f\rz, \eta\rz))X)\\
    =&(f\rmo s_{R}(f\rz, \eta\rz)\alpha\ro|s_{R}(\alpha\rt|\eta\ro )X)\\
    =&(\alpha\ro|s_{R}(\alpha\rt|\eta\ro )X\rt t_{R}(f\rmo s_{R}(f\rz, \eta\rz)|X\ro))\\
    =&(\alpha|\eta\ro X\rt t_{R}(f, \eta\rz\bra X\ro)),
\end{align*}
on the other hand
\begin{align*}
    \tilde{\psi}(\alpha\rt (f\bra& \alpha\ro)\rmo\ot_{A}(f\bra \alpha\ro)\rz)(\eta\ot_{A_{s}}X)\\
    =&(\alpha\rt (f\bra \alpha\ro)\rmo s_{R}((f\bra \alpha\ro)\rz, \eta)|X)\\
    =&( (f\bra \alpha\ro)\rmo s_{R}((f\bra \alpha\ro)\rz, \eta)|X\rt t_{R}(\alpha\rt|X\ro))\\
    =&(f\bra \alpha\ro, \eta\bra (X\rt t_{R}(\alpha\rt|X\ro)) )\\
    =&(f\bra (\alpha\ro s_{R}(\alpha\rt|X\ro)), \eta\bra X\rt )\\
    =&(\alpha\ro s_{R}(\alpha\rt|X\ro)| (\eta\bra X\rt)\ro t_{R}(f, (\eta\bra X\rt)\rz))\\
    =&(\alpha|X\ro (\eta\bra X\rt)\ro t_{R}(f, (\eta\bra X\rt)\rz)),
\end{align*}
as $\Lambda$ is a right-right Yetter-Drinfeld module, we get the left-right Yetter-Drinfeld condition for $\Lambda^{\vee}$. The second half of the statement can be similarly proved. And $\Lambda\simeq {}^{\vee}(\Lambda^{\vee})$ and $T\simeq ({}^{\vee}T)^{\vee}$ can be given in its canonical way, as $\Lambda$ is right finitely generated projective $A$-module and $T$ is left finitely generated projective $A$-module.
\end{proof}

\begin{prop}
    Let $\cR$ and $\Omega$ be right bialgebroids, and $\Lambda^{i}\in\mathcal{YD}^{\cR}_{\cR}$, $T^{i}\in{}^{\Omega}\mathcal{YD}_{\Omega}$, such that there is a dual pairing between $T^{i}$ and $\Lambda^{i}$, for $i=1,2$, then the map
    $(\bullet,\bullet)_{2}: (T^{2}\ot_{A}T^{1})\ot (\Lambda^{1}\ot_{A}\Lambda^{2})\to A$ given by
    \begin{align*}
        (f\ot_{A}g, \eta^{1}\ot_{A}\eta^{2})_{2}=(f\bra s_{R}(g, \eta^{1}), \eta^{2})=(f, \eta^{2}\bra t_{R}(g, \eta^{1}))
    \end{align*}
       for any $f\ot_{A}g\in T^{2}\ot_{A}T^{1}$ and $\eta^{1}\ot_{A}\eta^{2}\in \Lambda^{1}\ot_{A}\Lambda^{2}$
    is a dual-pairing between Yetter-Drinfeld modules.
\end{prop}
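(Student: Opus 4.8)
The plan is to verify directly the four conditions in the definition of a dual pairing between Yetter--Drinfeld modules, applied to the left--right $\Omega$-module $T^{2}\ot_{A}T^{1}$, the right--right $\cR$-module $\Lambda^{1}\ot_{A}\Lambda^{2}$, and the ambient pairing $(\bullet|\bullet)$ between $\Omega$ and $\cR$ that underlies both component pairings. Throughout I would use the diagonal structures recorded earlier: on $T^{2}\ot_{A}T^{1}$ the action $(f\ot_{A}g)\bra\alpha=f\bra\alpha\ro\ot_{A}g\bra\alpha\rt$ and the left coaction ${}_{R}\delta(f\ot_{A}g)=g\rmo f\rmo\ot_{A}f\rz\ot_{A}g\rz$, and on $\Lambda^{1}\ot_{A}\Lambda^{2}$ the action $(\eta^{1}\ot_{A}\eta^{2})\bra X=\eta^{1}\bra X\ro\ot_{A}\eta^{2}\bra X\rt$ and the right coaction $\delta_{R}(\eta^{1}\ot_{A}\eta^{2})=\eta^{1}\rz\ot_{A}\eta^{2}\rz\ot_{A}\eta^{1}\ro\eta^{2}\ro$.

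First I would confirm that $(\bullet,\bullet)_{2}$ is well defined and that its two defining expressions agree. The identity $(f\bra s_{R}(g,\eta^{1}),\eta^{2})=(f,\eta^{2}\bra t_{R}(g,\eta^{1}))$ is precisely the second axiom of the pairing $(T^{2},\Lambda^{2})$ applied to the element $s_{R}(g,\eta^{1})$. Factoring through the balanced tensor product $T^{2}\ot_{A}T^{1}$ uses that $s_{R}$ is an algebra map together with $(g\bra t_{R}(a),\eta^{1})=a(g,\eta^{1})$, a special case of the second axiom for $(T^{1},\Lambda^{1})$; factoring through $\Lambda^{1}\ot_{A}\Lambda^{2}$ uses the companion identity $(g,\eta^{1}\bra s_{R}(a))=(g,\eta^{1})a$ and then the second axiom for $(T^{2},\Lambda^{2})$. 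The $A$-bimodule condition (second bullet of the definition) for $(\bullet,\bullet)_{2}$ then follows by the same bookkeeping, pushing the source and target maps through the diagonal action and invoking the $A^{e}$-linearity of the two component pairings.

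The substance of the proof is the two compatibility conditions relating action and coaction. For the third axiom I would expand the left-hand side as $(f\bra\alpha\ro\bra s_{R}(g\bra\alpha\rt,\eta^{1}),\eta^{2})$, apply the third axiom of $(T^{1},\Lambda^{1})$ to rewrite $(g\bra\alpha\rt,\eta^{1})$ through $\eta^{1}\rz,\eta^{1}\ro$, and then the third axiom of $(T^{2},\Lambda^{2})$ to the outer pairing with $\eta^{2}$; reassembling the resulting coproduct of $\alpha$ and the product $\eta^{1}\ro\eta^{2}\ro$ by the multiplicativity axioms of $(\bullet|\bullet)$ should produce exactly $(t_{R}((f\ot_{A}g,\eta^{1}\rz\ot_{A}\eta^{2}\rz)_{2})\alpha|\eta^{1}\ro\eta^{2}\ro)$, which is the required right-hand side built from $\delta_{R}$ of the tensor product. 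The fourth axiom is handled symmetrically: expanding $(f\ot_{A}g,(\eta^{1}\ot_{A}\eta^{2})\bra X)_{2}$ and applying the fourth axiom of each component pairing — first for $(T^{1},\Lambda^{1})$ to $(g,\eta^{1}\bra X\ro)$, then for $(T^{2},\Lambda^{2})$ — produces the left coaction $g\rmo f\rmo$ of $T^{2}\ot_{A}T^{1}$ and matches $((f\ot_{A}g)\rmo|s_{R}((f\ot_{A}g)\rz,\eta^{1}\ot_{A}\eta^{2})_{2}\,X)$.

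I expect the main obstacle to be the combinatorial reassembly in the third and fourth axioms: after applying the component axioms one is left with nested occurrences of $s_{R}$, $t_{R}$ and the coproducts of $\Omega$ and $\cR$, which must be collapsed into the single coaction of the tensor product module. Closing this will rely on the Yetter--Drinfeld conditions for the $\Lambda^{i}$ and $T^{i}$ together with the multiplicativity of $(\bullet|\bullet)$ in each argument (the second and third bullets of Definition \ref{def. dual pairing for right bialgeboids}). I anticipate that the coassociativity of the diagonal coactions and the counit identities are exactly what is needed to finish, so no structural input beyond the stated hypotheses should be required.
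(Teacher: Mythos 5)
Your plan is correct and is essentially the paper's proof: a direct verification of the axioms, with the third axiom established by exactly the expansion you describe (diagonal action, the component third axioms, then collapsing via the multiplicativity axioms of $(\bullet|\bullet)$), and the fourth axiom handled "by the same method." One small correction to your closing anticipation: the reassembly does not in fact invoke the Yetter--Drinfeld conditions or counit identities; what closes it is the Takeuchi-product property of the coaction $\delta_{R}(\eta^{2})$, which lets one rewrite $\eta^{2}\rz\ot_{A}t_{R}(g,\eta^{1}\rz)\eta^{2}\ro$ as $\eta^{2}\rz\bra t_{R}(g,\eta^{1}\rz)\ot_{A}\eta^{2}\ro$, together with the bialgebroid pairing axioms you already cite.
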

\begin{proof}
    It is not hard to see
    \[((f\ot_{A}g)\bra (s_{R}(a)t_{R}(b)), (\eta^{1}\ot_{A}\eta^{2}))_{2}c=b((f\ot_{A}g), (\eta^{1}\ot_{A}\eta^{2})\bra (s_{R}(c)t_{R}(a)))_{2},\]
   for any $a, b, c\in A$. We can also see
    \begin{align*}
        ((f\ot_{A}g)\bra\alpha, \eta^{1}\ot_{A}\eta^{2})_{2}=&(f\bra\alpha\ro\ot_{A}g\bra\alpha\rt, \eta^{1}\ot_{A}\eta^{2})_{2}\\
        =&(f\bra\alpha\ro, \eta^{2}\bra t_{R}(g\bra \alpha\rt, \eta^{1}))\\
        =&(t_{R}(f, \eta^{2}\rz)\alpha\ro|s_{R}(t_{R}(g, \eta^{1}\rz)\alpha\rt|\eta^{1}\ro)\eta^{2}\ro)\\
        =&(t_{R}(f, \eta^{2}\rz)\alpha|\eta^{1}\ro t_{R}(g, \eta^{1}\rz)\eta^{2}\ro)\\
        =&(t_{R}(f, \eta^{2}\rz\bra t_{R}(g, \eta^{1}\rz))\alpha|\eta^{1}\ro \eta^{2}\ro)\\
        =&(t_{R}(f, \eta^{2}\rz\bra t_{R}(g, \eta^{1}\rz))\alpha|\eta^{1}\ro \eta^{2}\ro)\\
        =&(t_{R}(f\ot_{A}g, \eta^{1}\rz\ot_{A}\eta^{1}\rz)_{2}\alpha|\eta^{1}\ro \eta^{2}\ro),
    \end{align*}
and by the same method, we have
\begin{align*}
    (f\ot_{A}g, (\eta^{1}\ot_{A}\eta^{2})\bra X)_{2}=(g\rmo f\rmo|s_{R}(f\rz\ot_{A}g\rz, \eta^{1}\ot_{A}\eta^{2})_{2}X).
\end{align*}

\end{proof}

\begin{thm}
    Let $\cR$ be a right finitely generated projective right bialgebroid and $\Omega$ be a left finitely generated projective right bialgebroid such that $\Omega=\cR^{\vee}$ (or $\cR={}^{\vee}\Omega$), then the category of right finitely generated projective right-right Yetter-Drinfeld modules of $\cR$ is anti-monoidal equivalent to the category of left finitely generated projective left-right Yetter-Drinfeld modules of $\Omega$ as pre-braided monoidal category.

\end{thm}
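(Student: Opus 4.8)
The plan is to realize the claimed equivalence by the right-dual functor $F=(-)^{\vee}$ together with its quasi-inverse $G={}^{\vee}(-)$, and to equip $F$ with the anti-monoidal and braiding data coming from the dual-pairing machinery. First I would check that $F$ is a functor. On objects it sends a right finitely generated projective $\Lambda\in\mathcal{YD}^{\cR}_{\cR}$ to $\Lambda^{\vee}\in{}^{\Omega}\mathcal{YD}_{\Omega}$ with $\Omega=\cR^{\vee}$, which is exactly Theorem \ref{thm. equivalent of left and right Yetter-Drinfeld modules}, and it lands in left finitely generated projective modules. On a morphism $\varphi\colon\Lambda\to\Lambda'$ I would take the transpose $\varphi^{\vee}\colon\Lambda'^{\vee}\to\Lambda^{\vee}$ determined by $(\varphi^{\vee}(f),\eta)=(f,\varphi(\eta))$, and verify that it is right $\Omega$-linear and left $\Omega$-colinear using that $\varphi$ is a morphism of right-right Yetter--Drinfeld modules; thus $F$ reverses the direction of morphisms. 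That $F$ is an (anti-)equivalence onto its target is then guaranteed by the canonical isomorphisms $\Lambda\simeq{}^{\vee}(\Lambda^{\vee})$ and $T\simeq({}^{\vee}T)^{\vee}$ supplied by Theorem \ref{thm. equivalent of left and right Yetter-Drinfeld modules}, which furnish the unit and counit and are natural precisely because all modules in sight are finitely generated projective, so that elements and functionals can be exchanged through the dual bases $\lambda_{i}\ot_{A}\lambda^{i}$.

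The anti-monoidal structure is supplied by the immediately preceding Proposition. Applying it with $T^{i}=\Lambda^{i\vee}$ yields a dual pairing between $\Lambda^{2\vee}\ot_{A}\Lambda^{1\vee}$ and $\Lambda^{1}\ot_{A}\Lambda^{2}$, and by nondegeneracy this exhibits a natural isomorphism $J_{\Lambda^{1},\Lambda^{2}}\colon (\Lambda^{1}\ot_{A}\Lambda^{2})^{\vee}\xrightarrow{\ \sim\ }\Lambda^{2\vee}\ot_{A}\Lambda^{1\vee}=F(\Lambda^{2})\ot_{A}F(\Lambda^{1})$, so that $F$ reverses the tensor order. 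Concretely $J$ is read off from the bijections $\psi,\tilde\psi$ built in the proof of Theorem \ref{thm. equivalent of left and right Yetter-Drinfeld modules} together with the tensor-product pairing $(\bullet,\bullet)_{2}$. I would then check the coherence axioms for $J$: associativity compatibility, which follows formally from associativity of $(\bullet,\bullet)_{2}$, and unit compatibility with the base $A$, which follows from $(\alpha|1)=\varepsilon(\alpha)$.

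The core of the argument is compatibility with the (pre-)braidings. The braiding $\sigma(\eta^{1}\ot_{A}\eta^{2})=\eta^{2}\rz\ot_{A}\eta^{1}\bra\eta^{2}\ro$ on $\mathcal{YD}^{\cR}_{\cR}$ from Theorem \ref{thm. the category of Yetter-Drinfeld module is braided} must be matched, through $F$ and $J$, with the braiding $\tilde\sigma(f^{1}\ot_{A}f^{2})=f^{2}\bra f^{1}\rmo\ot_{A}f^{1}\rz$ on ${}^{\Omega}\mathcal{YD}_{\Omega}$ from Theorem \ref{thm. equivalence of monoidal category2}. Because $F$ is contravariant and tensor-reversing, a direction count shows the relation to prove is $J_{\Lambda^{1},\Lambda^{2}}\circ F(\sigma_{\Lambda^{1},\Lambda^{2}})\circ J_{\Lambda^{2},\Lambda^{1}}^{-1}=\tilde\sigma_{F\Lambda^{1},F\Lambda^{2}}$; note that this identity involves only $\sigma$ and $\tilde\sigma$ and no inverse braidings, as is necessary in the merely pre-braided setting. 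I would prove it by pairing both composites against an arbitrary $\eta^{1}\ot_{A}\eta^{2}\in\Lambda^{1}\ot_{A}\Lambda^{2}$ and reducing to an identity in $A$, where the defining relations of the dual pairing between $\Omega$ and $\cR$ (the ones relating coproducts, the maps $s_{R},t_{R}$, and $\varepsilon$) together with the Yetter--Drinfeld compatibility convert one side into the other. I expect this step to be the main obstacle, since it is where the two coactions, the dualization, and the (implicit) antipode all interact at once, and where the finitely-generated-projective hypothesis is used most heavily to move between $\Lambda$ and $\Lambda^{\vee}$.

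Finally, assembling $F$, the structure isomorphism $J$, and the verified braiding relation yields the asserted anti-monoidal equivalence of pre-braided monoidal categories, and the quasi-inverse $G={}^{\vee}(-)$ carries the symmetric structure by the second-half statements of Theorem \ref{thm. equivalent of left and right Yetter-Drinfeld modules}.
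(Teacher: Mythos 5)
Your proposal is correct and follows essentially the same route as the paper: the right-dual functor with quasi-inverse ${}^{\vee}(-)$, the anti-monoidal structure map coming from the tensor-product pairing $(\bullet,\bullet)_{2}$ of the preceding Proposition (your $J$ is the inverse of the paper's $\psi$, whose bijectivity is likewise established via dual bases rather than abstract nondegeneracy), and the braiding compatibility $\psi\circ\tilde{\sigma}=\sigma^{\vee}\circ\psi$ verified by pairing both sides against arbitrary elements, exactly as you outline. The only point the paper treats explicitly that you leave implicit is that the tensor product of two finitely generated projective Yetter--Drinfeld modules is again finitely generated projective (exhibited there by the dual basis $\lambda_{i}\ot_{A}\xi_{j}$, $\xi^{j}\ot_{A}\lambda^{i}$), which is needed so that the fgp subcategories are themselves monoidal.
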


\begin{proof}
    First, let's show the category right finitely generated projective right-right Yetter-Drinfeld modules of $\cR$ is monoidal, given two right finitely generated projective right-right Yetter-Drinfeld modules $\Lambda^{1}$ and $\Lambda^{2}$ of $\cR$, we know for any $\eta^{1}\in \Lambda^{1}$ and $\eta^{2}\in \Lambda^{2}$, there is $\lambda_{i}\ot_{A}\lambda^{i}\in \Lambda^{1}\ot_{A}\Lambda^{1\vee}$ and $\xi_{j}\ot_{A}\xi^{j}\in \Lambda^{2}\ot_{A}\Lambda^{2\vee}$, such that $\eta^{1}=\lambda_{i}\bra s_{R}(\lambda^{i},\eta^{2})$ and $\eta^{2}=\xi_{j}\bra s_{R}(\xi^{j},\eta^{2})$. We can define a map $\psi: \Lambda^{2\vee}\ot_{A}\Lambda^{1\vee}\to (\Lambda^{1}\ot_{A}\Lambda^{2})^{\vee}$ by
    \begin{align*}
        \psi(f\ot_{A}g)(\eta^{1}\ot_{A}\eta^{2}):=(f\ot_{A}g, \eta^{1}\ot_{A}\eta^{2})_{2}=(f\bra s_{R}(g, \eta^{1}), \eta^{2})=(f, \eta^{2}\bra t_{R}(g, \eta^{1})).
    \end{align*}
As $\Lambda^{1}$ and $\Lambda^{2}$ are right finitely generated projective, by the same method as in  Theorem \ref{thm. equivalent of left and right Yetter-Drinfeld modules} we can see $\psi$ is bijective with its inverse given by
\begin{align*}
    \psi^{-1}(F):=F(\lambda_{i}\ot_{A}\bullet)\ot_{A}\lambda^{i}.
\end{align*}
Thus, we can see
\begin{align*}
    (\lambda_{i}\ot_{A}\xi_{j})\bra s_{R}(\xi^{j}\ot_{A}\lambda^{i}, \eta^{1}\ot_{A}\eta^{2})_{2}=&(\lambda_{i}\ot_{A}\xi_{j})\bra s_{R}(\xi^{j} \bra s_{R}(\lambda^{i}, \eta^{1}),\eta^{2})\\
    =&(\lambda_{i}\ot_{A}\xi_{j}\bra t_{R}(\lambda^{i}, \eta^{1}))\bra s_{R}(\xi^{j} ,\eta^{2})\\
    =&(\lambda_{i}\bra s_{R}(\lambda^{i}, \eta^{1})\ot_{A}\xi_{j})\bra s_{R}(\xi^{j} ,\eta^{2})\\
    =&\eta^{1}\ot_{A}\eta^{2},
\end{align*}
where we use the fact $\xi_{i}\ot_{A}\xi^{i}$ satisfies $\xi_{i} \bra t_{R}(a)\ot_{A}\xi^{i}=\xi_{i} \ot_{A}\xi^{i}\bra s_{R}(a)$ which is given by the proof of Theorem \ref{thm. equivalent of left and right Yetter-Drinfeld modules}. So the category right finitely generated projective right-right Yetter-Drinfeld modules of $\cR$ is monoidal. Moreover, we can see $\psi$ satisfies the coherent condition, on the one hand,
\begin{align*}
    ((\psi)\circ(\psi\ot_{A}\id)(f\ot_{A}g\ot_{A}h), \eta^{1}\ot_{A}\eta^{2}\ot_{A}\eta^{3})=&(\psi(f\ot_{A}g),(\eta^{2}\ot_{A}\eta^{3}) \bra t_{R}(h, \eta^{1}))_{2}\\
    =&(f, \eta^{3}\bra t_{R}(g, \eta^{2}\bra t_{R}(h, \eta^{1}))),
\end{align*}
on the other hand,
\begin{align*}
    ((\psi)\circ(\id\ot_{A}\psi)(f\ot_{A}g\ot_{A}h), \eta^{1}\ot_{A}\eta^{2}\ot_{A}\eta^{3})=&(f, \eta^{3}\bra t_{R}(\psi(g\ot_{A}h), \eta^{1}\ot_{A}\eta^{2})_{2})\\
    =&(f, \eta^{3}\bra t_{R}(g, \eta^{2}\bra t_{R}(h, \eta^{1}))).
\end{align*}

Similarly, the category left finitely generated projective left-right Yetter-Drinfeld modules of $\Omega$ is monoidal as well. More precisely, as $T^{1}$ and $T^{2}$ are left-right Yetter-Drinfeld modules, as they are left finitely generated projective, then for any $g\in T^{1}$, $f\in T^{2}$, there is $\lambda_{i}\ot_{A}\lambda^{i}\in {}^{\vee}T^{1}\ot_{A}T^{1}$ and $\xi_{j}\ot_{A}\xi^{j}\in {}^{\vee}T^{2}\ot_{A}T^{2}$, such that $f=\xi^{i}\bra t_{R}(f, \xi_{i})$ and $g=\lambda^{i}\bra t_{R}(g, \lambda_{i})$. There is also a bijective map $\phi: {}^{\vee}T^{1}\ot_{A}{}^{\vee}T^{2}\to {}^{\vee}(T^{2}\ot_{A}T^{1})$given by
\[\phi(\eta^{1}\ot_{A}\eta^{2})(f\ot_{A}g)=(f, \eta^{2} \bra t_{R}(g, \eta^{1}))=(f\bra s_{R}(g, \eta^{1}), \eta^{2}).\]
By Theorem \ref{thm. equivalent of left and right Yetter-Drinfeld modules}, we can see these two monoidal categories are equivalent. Recall that, the braiding for $\mathcal{YD}^{\cR}_{\cR}$ and ${}^{\Omega}\mathcal{YD}_{\Omega}$ are given by:
\[\sigma(\eta^{1}\ot_{A}\eta^{2})=\eta^{2}\rz\ot_{A}\eta^{1}\bra\eta^{2}\ro,\]
for $\eta^{1}\ot_{A}\eta^{2}\in \Lambda^{1}\ot_{A}\Lambda^{2}$ and $\Lambda^{1}, \Lambda^{2}\in\mathcal{YD}^{\cR}_{\cR}$,
\[ \tilde{\sigma}(f\ot_{A}g)=g\bra f\rmo\ot_{A}f\rz,\]
for $f\ot_{A}g\in T^{2}\ot_{A}T^{1}$ and $T^{1}, T^{2}\in{}^{\Omega}\mathcal{YD}_{\Omega}$. We can see
\begin{align*}
    (\psi\circ\tilde{\sigma}(f\ot_{A}g), \eta^{2}\ot_{A}\eta^{1})=&(g\bra f\rmo\ot_{A}f\rz, \eta^{2}\ot_{A}\eta^{1})_{2}\\
    =&(g\bra (f\rmo s_{R}(f\rz, \eta^{2})), \eta^{1})\\
    =&(f\rmo s_{R}(f\rz, \eta^{2})|\eta^{1}\ro t_{R}(g, \eta^{1}\rz))\\
    =&(f, (\eta^{2}\bra\eta^{1}\ro)\bra t_{R}(g, \eta^{1}\rz))\\
    =&(f\ot_{A}g, \eta^{1}\rz\ot_{A}\eta^{2}\bra \eta^{1}\ro)_{2}\\
    =&(f\ot_{A}g, \sigma(\eta^{2}\ot_{A}\eta^{1}))_{2}\\
    =&(\sigma^{\vee}\circ\psi(f\ot_{A}g), \eta^{2}\ot_{A}\eta^{1}),
\end{align*}
where $\sigma^{\vee}$ is given by
\[(\sigma^{\vee}(T), \eta^{2}\ot_{A}\eta^{1}):=(T, \sigma(\eta^{2}\ot_{A}\eta^{1})),\]
    for any $T\in (\Lambda^{1}\ot_{A}\Lambda^{2})^{\vee}$. So the right (resp. left) dual functor is a  braided monoidal functor.
\end{proof}

Recall that if $\Lambda\in\mathcal{YD}^{\cR}_{\cR}$ and right finitely generated projective, then there is $\{\lambda_{i}\}_{i}\in \Lambda$ and $\{\lambda^{i}\}_{i}\in \Lambda^{\vee}$, such that $\eta=\lambda_{i}\bra s_{R}(\lambda^{i},\eta)$, and  we have the following proposition:
\begin{prop}
    Let $\cR$ be a right bialgebroid and $\Lambda\in\mathcal{YD}^{\cR}_{\cR}$, such that both $\cR$ and $\Lambda$ are right finitely generated projective. Define  $F^{j}_{i}\in\cR^{\vee}$ by $(F^{j}_{i}|X):=(\lambda^{j}, \lambda_{i}\bra X)$ for any $X\in\cR$. As both $\cR$ and $\Lambda$ are right finitely generated projective, $\delta_{R}(\lambda_{i})=\sum_{k}\lambda_{k}\ot_{A}v^{k}_{i}$, for some $\{v^{k}_{i}\}_{k}\in \cR$, then we have
    \begin{itemize}
        \item [(1)] $\lambda_{i}=\lambda_{j}\bra s_{R}(F^{j}_{i}|1)$,
        \item[(2)] $\Delta_{\cR^{\vee}}(F^{i}_{j})=F^{i}_{k}\ot_{A}F^{k}_{j}$,
        \item[(3)] $\Delta_{R}(v^{k}_{j})=v^{k}_{i}\ot_{A}v^{i}_{j}$,
        \item[(4)] $v^{k}_{i}X\rt t_{R}(F^{j}_{k}|X\ro)=X\ro v^{j}_{k} s_{R}(F^{k}_{i}|X\rt)$,
        \item[(5)] $\sigma(\lambda_{i}\ot_{A}\lambda_{j})=\lambda_{k}\ot_{A}\lambda_{l} \bra s_{R}(F^{l}_{i}|v^{k}_{j})$,
    \end{itemize}
    for any $X, Y\in \cR$.
\end{prop}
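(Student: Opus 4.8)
The plan is to reduce every item to the single dual-basis identity $\eta=\lambda_{i}\bra s_{R}(\lambda^{i},\eta)$, the defining formula $(F^{j}_{i}|X)=(\lambda^{j},\lambda_{i}\bra X)$, and the comodule identity $\delta_{R}(\lambda_{j})=\lambda_{k}\ot_{A}v^{k}_{j}$. With these in hand, items (1), (2) and (5) are essentially immediate. For (1), I would evaluate $(F^{j}_{i}|1)=(\lambda^{j},\lambda_{i}\bra 1)=(\lambda^{j},\lambda_{i})$ and observe that $\lambda_{j}\bra s_{R}(F^{j}_{i}|1)=\lambda_{j}\bra s_{R}(\lambda^{j},\lambda_{i})$ is exactly the dual-basis identity evaluated at $\eta=\lambda_{i}$. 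For (5), I would expand the braiding $\sigma(\lambda_{i}\ot_{A}\lambda_{j})=\lambda_{j}\rz\ot_{A}\lambda_{i}\bra\lambda_{j}\ro=\lambda_{k}\ot_{A}\lambda_{i}\bra v^{k}_{j}$ and then rewrite the second leg by the dual basis as $\lambda_{i}\bra v^{k}_{j}=\lambda_{l}\bra s_{R}(\lambda^{l},\lambda_{i}\bra v^{k}_{j})=\lambda_{l}\bra s_{R}(F^{l}_{i}|v^{k}_{j})$, which is the claim.

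For (2), I would compute the pairing of $F^{i}_{j}$ against a product. Using $\lambda_{j}\bra X=\lambda_{k}\bra s_{R}(F^{k}_{j}|X)$ I obtain $(F^{i}_{j}|XY)=(\lambda^{i},(\lambda_{j}\bra X)\bra Y)=(\lambda^{i},\lambda_{k}\bra(s_{R}(F^{k}_{j}|X)Y))=(F^{i}_{k}|s_{R}(F^{k}_{j}|X)Y)$. Comparing this with the relation $(\alpha|XY)=(\alpha\ro|s_{R}(\alpha\rt|X)Y)$ that determines the coproduct of $\cR^{\vee}$, and invoking that this pairing does determine the coproduct because $\cR$ is right finitely generated projective, I conclude $\Delta_{\cR^{\vee}}(F^{i}_{j})=F^{i}_{k}\ot_{A}F^{k}_{j}$.

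Items (3) and (4) both rest on a coefficient-extraction principle: since $\{\lambda_{i}\},\{\lambda^{i}\}$ is a dual basis, the projection $\lambda^{l}\ot_{A}\id$ is a left inverse of $m\mapsto\lambda_{l}\ot_{A}m$ on $\Lambda\ot_{A}M$, so in particular $v^{l}_{m}=(\lambda^{l}\ot_{A}\id)\delta_{R}(\lambda_{m})$ and, dually, $\sum_{n}v^{n}_{m}\,t_{R}(\lambda^{l},\lambda_{n})=v^{l}_{m}$. For (3) I would apply $\lambda^{l}\ot_{A}\id\ot_{A}\id$ to the coassociativity identity $(\delta_{R}\ot_{A}\id)\delta_{R}(\lambda_{j})=(\id\ot_{A}\Delta_{R})\delta_{R}(\lambda_{j})$. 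The left side is $\delta_{R}(\lambda_{k})\ot_{A}v^{k}_{j}$, which after extraction becomes $v^{l}_{k}\ot_{A}v^{k}_{j}$; the right side is $\lambda_{k}\ot_{A}\Delta_{R}(v^{k}_{j})$, and since $\lambda^{l}\ot_{A}\id$ commutes with $\Delta_{R}$ in the trailing factors it becomes $\Delta_{R}(v^{l}_{j})$. This yields (3).

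The main work, and the step I expect to be the real obstacle, is (4), which is the right-right Yetter-Drinfeld condition written in the basis and then extracted. I would start from $(\lambda_{i}\bra X\rt)\rz\ot_{A}X\ro(\lambda_{i}\bra X\rt)\ro=\lambda_{i}\rz\bra X\ro\ot_{A}\lambda_{i}\ro X\rt$ and apply $\lambda^{l}\ot_{A}\id$. On the right-hand side, substituting $\delta_{R}(\lambda_{i})=\lambda_{k}\ot_{A}v^{k}_{i}$ gives $(\lambda_{k}\bra X\ro)\ot_{A}v^{k}_{i}X\rt$, and extracting produces $v^{k}_{i}X\rt\,t_{R}(F^{l}_{k}|X\ro)$. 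On the left-hand side, I would first rewrite $\lambda_{i}\bra X\rt=\lambda_{m}\bra s_{R}(F^{m}_{i}|X\rt)$, apply $\delta_{R}$ to get $\lambda_{n}\ot_{A}v^{n}_{m}s_{R}(F^{m}_{i}|X\rt)$, extract, commute $s_{R}$ past $t_{R}$, and use the contraction $\sum_{n}v^{n}_{m}t_{R}(\lambda^{l},\lambda_{n})=v^{l}_{m}$ to arrive at $X\ro v^{l}_{m}s_{R}(F^{m}_{i}|X\rt)$; setting $l=j$ gives (4). The delicacy here is purely in keeping the left and right $A$-actions straight: the extraction $\lambda^{l}\ot_{A}\id$ sends $\eta\ot_{A}Z$ to $Z\,t_{R}(\lambda^{l},\eta)$, and one must track carefully where $s_{R}$ versus $t_{R}$ appears so that every balanced tensor product and Takeuchi constraint is respected. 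This bookkeeping, rather than any conceptual difficulty, is the crux of the proof.
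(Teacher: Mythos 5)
Your proposal is correct and follows essentially the same route as the paper's proof: (1) and (5) by specializing the dual-basis identity, (2) by pairing $F^i_j$ against products and invoking finite generation/projectivity to pin down the coproduct, and (3)--(4) by applying coassociativity and the Yetter--Drinfeld condition to $\lambda_i$ and comparing coefficients. The only difference is presentational: you make the coefficient extraction explicit via $\eta\ot_A Z\mapsto Z\,t_R(\lambda^l,\eta)$ and the contraction $\sum_n v^n_m t_R(\lambda^l,\lambda_n)=v^l_m$ (which in fact tightens a point the paper leaves implicit, namely that the $v^k_i$ should be the canonical coefficients), whereas the paper simply writes both sides in the form $\lambda_j\ot_A(\cdots)$ and equates them.
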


\begin{proof}
    By definition, it is not hard to see
    \[\lambda_{i}\bra X=\lambda_{j}\bra s_{R}(\lambda^{j}, \lambda_{i}\bra X)=\lambda_{j}\bra s_{R}(F^{j}_{i}|X),\]
for any $X\in\cR$.
As a result, $\lambda_{i}=\lambda_{j}\bra s_{R}(F^{j}_{i}|1)$.

For (2), we first observe that
$(\lambda^{i}, \lambda_{j})(F^{j}_{k}|X)=(\lambda^{i}, \lambda_{j}\bra s_{R}(F^{j}_{k}|X))=(\lambda^{i}, \lambda_{j}\bra s_{R}(\lambda^{j}, \lambda_{k}\bra X))=(\lambda^{i}, \lambda_{k}\bra X)=(F^{i}_{k}|X)$. Thus we have

\begin{align*}
    (F^{j}_{i}|XY)=&(\lambda^{j}, (\lambda_{i}\bra X)\bra Y)=(\lambda^{j}, (\lambda_{k}\bra s_{R}(F^{k}_{i}|X))\bra Y)=(\lambda^{j}, \lambda_{k} \bra(s_{R}(F^{k}_{i}|X)Y))\\
    =&(\lambda^{j}, \lambda_{l}\bra s_{R}(F^{l}_{k}|s_{R}(F^{k}_{i}|X)Y))=(\lambda^{j}, \lambda_{l})(F^{l}_{k}|s_{R}(F^{k}_{i}|X)Y)\\
    =&(F^{j}_{k}|s_{R}(F^{k}_{i}|X)Y).
\end{align*}
As a right $A$-linear map of $\cR$, we can see $\Delta_{\cR^{\vee}}(F^{i}_{j})=F^{i}_{k}\ot_{A}F^{k}_{j}$.

 For (3), we have
\begin{align*}
\lambda_{k}\ot_{A}\Delta_{R}(v^{k}_{j})=(\id\ot_{A}\Delta_{R})\circ \delta_{R}(\lambda_{j})=(\delta_{R}\ot_{A}\id)\circ \delta_{R}(\lambda_{j})=\lambda_{k}\ot_{A}v^{k}_{i}\ot_{A}v^{i}_{j}.
\end{align*}

For (4), we have on the one hand,
\begin{align*}
    (\lambda_{i})\rz\bra X\ro\ot_{A}(\lambda_{i})\ro X\rt=&\lambda_{k}\bra X\ro\ot_{A}v^{k}_{i}X\rt=\lambda_{j}\bra s_{R}(F^{j}_{k}|X\ro)\ot_{A}v^{k}_{i}X\rt\\
    =&\lambda_{j}\ot_{A}v^{k}_{i}X\rt t_{R}(F^{j}_{k}|X\ro),
\end{align*}
on the other hand,
\begin{align*}
    (\lambda_{i}\bra X\rt)\rz\ot_{A}X\ro  (\lambda_{i}\bra X\rt)\ro=\lambda_{j}\ot_{A}X\ro v^{j}_{k}s_{R}(F^{k}_{i}|X\rt),
\end{align*}
so we have (4) by Yetter-Drinfeld condition.

For (5), we have
\begin{align*}
    \sigma(\lambda_{i}\ot_{A}\lambda_{j})=\lambda_{j}{}\rz\ot_{A}\lambda_{i}\bra \lambda_{j}{}\ro=\lambda_{k}\ot_{A}\lambda_{l} \bra s_{R}(F^{l}_{i}|v^{k}_{j}).
\end{align*}


\end{proof}

Similarly, if $T\in{}^{\Omega}\mathcal{YD}_{\Omega}$ and left finitely generated projective, then there is $\{\lambda_{i}\}_{i}\in {}^{\vee}T$ and $\{\lambda^{i}\}_{i}\in T$, such that $f=\lambda^{i}\bra t_{R}(f,\lambda_{i})$, and  we have the following proposition:
\begin{prop}
    Let $\Omega$ be a right bialgebroid and $T\in{}^{\Omega}\mathcal{YD}_{\Omega}$, such that both $\Omega$ and $T$ are left finitely generated projective. Define  $V^{j}_{i}\in{}^{\vee}\Omega$ by $(\alpha|V^{j}_{i}):=(\lambda^{j}\bra\alpha, \lambda_{i})$ for any $\alpha\in\Omega$. As both $\Omega$ and $T$ are left finitely generated projective, ${}_{R}\delta(\lambda_{i})=\sum_{k}f^{i}_{k}\ot_{A}\lambda^{k}$, for some $\{f^{i}_{k}\}_{k}\in \Omega$, then we have
    \begin{itemize}
        \item [(1)] $\lambda_{i}=\lambda^{j}\bra t_{R}(1|V^{j}_{i})$,
        \item[(2)] $\Delta_{{}^{\vee}\Omega}(V^{i}_{k})=V^{i}_{j}\ot_{A}V^{j}_{k}$,
        \item[(3)] $\Delta_{\Omega}(f^{k}_{j})=f^{k}_{i}\ot_{A}f^{i}_{j}$,
        \item[(4)] $f^{i}_{j}\alpha\ro s_{R}(\alpha\rt|V^{j}_{k})=\alpha\rt f^{j}_{k} t_{R}(\alpha\ro|V^{i}_{j})$,
        \item[(5)] $\tilde{\sigma}(\lambda^{i}\ot_{A}\lambda^{j})=\lambda^{l}\bra t_{R}(f^{i}_{k}|V^{j}_{l})\ot_{A}\lambda^{k}$,
    \end{itemize}
    for any $\alpha, \beta\in \Omega$.
\end{prop}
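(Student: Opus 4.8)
The plan is to treat this proposition as the left-handed mirror of the preceding one, replacing the right reconstruction $\eta=\lambda_{i}\bra s_{R}(\lambda^{i},\eta)$ used there by the left reconstruction $f=\lambda^{i}\bra t_{R}(f,\lambda_{i})$ that is available for the left finitely generated projective module $T$. I would take as the basic tools the defining relation $(\alpha|V^{j}_{i})=(\lambda^{j}\bra\alpha,\lambda_{i})$, the left coaction ${}_{R}\delta(\lambda^{i})=f^{i}_{k}\ot_{A}\lambda^{k}$, the bialgebroid structure on ${}^{\vee}\Omega$ recorded after Definition~\ref{def. dual pairing for right bialgeboids}, and the fact from Theorem~\ref{thm. equivalent of left and right Yetter-Drinfeld modules} that ${}^{\vee}T\in\mathcal{YD}^{{}^{\vee}\Omega}_{{}^{\vee}\Omega}$. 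The first thing I would record is the \emph{transition identity} $\lambda^{i}\bra\alpha=\lambda^{j}\bra t_{R}(\alpha|V^{i}_{j})$, the exact analogue of $\lambda_{i}\bra X=\lambda_{j}\bra s_{R}(F^{j}_{i}|X)$, obtained by feeding the reconstruction of $\lambda^{i}\bra\alpha$ into the functional $\lambda_{j}$. Specialising $\alpha=1$ and unfolding $(1|V^{i}_{j})=(\lambda^{i},\lambda_{j})$ gives part~(1) at once.

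For part~(2) I would imitate the earlier computation of $\Delta_{\cR^{\vee}}(F^{i}_{j})$: first establish the idempotent-type relation $(\lambda^{i},\lambda_{j})(\alpha|V^{j}_{k})=(\alpha|V^{i}_{k})$ from the transition identity, then expand $(\alpha\beta|V^{i}_{k})=((\lambda^{i}\bra\alpha)\bra\beta,\lambda_{k})$ through the right $\Omega$-module structure of $T$ and the defining relation, bringing it to the shape dictated by the characterising formula for $\Delta_{{}^{\vee}\Omega}$ in Definition~\ref{def. dual pairing for right bialgeboids}; reading off the two legs yields $\Delta_{{}^{\vee}\Omega}(V^{i}_{k})=V^{i}_{j}\ot_{A}V^{j}_{k}$. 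Part~(3) is immediate from the coassociativity $(\Delta_{\Omega}\ot_{A}\id)\circ{}_{R}\delta=(\id\ot_{A}{}_{R}\delta)\circ{}_{R}\delta$ of the left coaction applied to $\lambda^{j}$, exactly as $\Delta_{R}(v^{k}_{j})$ was obtained. Part~(5) is a one-line substitution: the braiding $\tilde{\sigma}$ on ${}^{\Omega}\mathcal{YD}_{\Omega}$ sends $\lambda^{i}\ot_{A}\lambda^{j}$ to $\lambda^{j}\bra(\lambda^{i})\rmo\ot_{A}(\lambda^{i})\rz=\lambda^{j}\bra f^{i}_{k}\ot_{A}\lambda^{k}$, and one further use of the transition identity rewrites $\lambda^{j}\bra f^{i}_{k}$ as $\lambda^{l}\bra t_{R}(f^{i}_{k}|V^{j}_{l})$, which is the claimed formula.

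The step I expect to be the main obstacle is part~(4), which repackages the left-right Yetter--Drinfeld compatibility of Definition~\ref{def. left-right YD}, namely $X\rt(\rho\bra X\ro)\rmo\ot_{A}(\rho\bra X\ro)\rz=\rho\rmo X\ro\ot_{A}\rho\rz\bra X\rt$, in terms of the matrix elements. My route is to apply this identity to $\rho=\lambda^{i}$ and compute each side in the basis $\{\lambda^{k}\}$: on the right-hand side I would substitute ${}_{R}\delta(\lambda^{i})=f^{i}_{k}\ot_{A}\lambda^{k}$ directly, while on the left-hand side I would first rewrite $\lambda^{i}\bra X\ro$ via the transition identity and only then apply ${}_{R}\delta$, so that both sides become $\Omega$-valued coefficients multiplying a common basis vector; matching coefficients, using the freeness supplied by the dual basis $\{\lambda_{l}\}$, then delivers the stated relation between the $f^{i}_{j}$ and the $V^{j}_{k}$. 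The genuine delicacy here is bookkeeping rather than conceptual: one must keep careful track of whether each base-algebra factor enters through $s_{R}$ or $t_{R}$ and preserve the multiplication order in $\Omega$, and these differ from the right-handed case precisely because the relevant comodule is now a \emph{left} comodule, so the coaction leg sits on the opposite side of the tensor factors throughout.
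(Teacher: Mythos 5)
Your outline is exactly the paper's own route: the paper proves only the right-handed twin proposition (for $F^{j}_{i}$ and $v^{k}_{i}$) and leaves the present one as its mirror image, and your five steps --- the transition identity for (1), expansion against the duality axioms for (2), coassociativity of ${}_{R}\delta$ for (3), the Yetter--Drinfeld identity written out in the dual basis for (4), and substitution of the braiding followed by the transition identity for (5) --- are precisely the mirrored versions of the paper's five computations. You also read the statement the way it must be intended ($\lambda^{i}=\lambda^{j}\bra t_{R}(1|V^{i}_{j})$ and ${}_{R}\delta(\lambda^{i})=f^{i}_{k}\ot_{A}\lambda^{k}$), silently correcting its index typos.

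One step as written is wrong, though it is dispensable. Your ``idempotent-type relation'' $(\lambda^{i},\lambda_{j})(\alpha|V^{j}_{k})=(\alpha|V^{i}_{k})$ is the \emph{unmirrored} copy of the right-handed relation $(\lambda^{i},\lambda_{j})(F^{j}_{k}|X)=(F^{i}_{k}|X)$, and it does not follow from the transition identity over a noncommutative base. What the transition identity actually gives, using $(f\bra t_{R}(a),\eta)=a(f,\eta)$, is
\begin{equation*}
(\alpha|V^{i}_{k})=(\alpha|V^{i}_{j})\,(\lambda^{j},\lambda_{k}),
\qquad\text{equivalently}\qquad
(\alpha|V^{i}_{k})=\bigl(t_{R}(\lambda^{i},\lambda_{j})\,\alpha\,\big|\,V^{j}_{k}\bigr),
\end{equation*}
with the scalar on the right and the summation index in the lower slot of $V$; your version unwinds instead to $\bigl(\alpha\, t_{R}(\lambda^{i},\lambda_{j})\big|V^{j}_{k}\bigr)$, so the base-algebra factor lands on the wrong side of $\alpha$, and no axiom lets $t_{R}(a)$ pass across $\alpha$ inside the pairing. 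This is precisely the $s_{R}$/$t_{R}$ bookkeeping hazard you flag yourself at the end of your plan. The slip is harmless for the proposition: part (2) does not need the relation at all, since the transition identity alone gives $(\alpha\beta|V^{i}_{k})=((\lambda^{i}\bra\alpha)\bra\beta,\lambda_{k})=(t_{R}(\alpha|V^{i}_{j})\beta|V^{j}_{k})$, which is already the shape $(\alpha\beta|V)=(t_{R}(\alpha|V\ro)\beta|V\rt)$ demanded by Definition~\ref{def. dual pairing for right bialgeboids}, and reading off the two legs yields $\Delta_{{}^{\vee}\Omega}(V^{i}_{k})=V^{i}_{j}\ot_{A}V^{j}_{k}$. (The analogous detour through the idempotent relation in the paper's right-handed proof is likewise removable.) With that lemma corrected or dropped, the rest of your outline, including the coefficient-matching argument for (4), goes through as stated.
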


\appendix
\section{Properties of Hopf algebroids} \label{sec6}


In this section, we will give some useful properties of (anti-)left and (anti-)right Hopf algebroids. Recall the shorthand for a (anti-)left Hopf algebroid
\begin{equation}X_{+}\ot_{B^{op}}X_{-}:=\lambda^{-1}(X\ot_{B}1)\end{equation}
\begin{equation}X_{[-]}\ot^{B^{op}}X_{[+]}:=\mu^{-1}(1\ot_{B}X)\end{equation}

We recall from \cite[Prop.~3.7]{schau1} that for a left Hopf algebroid,
\begin{align}
    \one{X_{+}}\ot_{B}\two{X_{+}}X_{-}&=X\ot_{B}1\label{equ. inverse lamda 1};\\
    \one{X}{}_{+}\ot_{B^{op}}\one{X}{}_{-}\two{X}&=X\ot_{B^{op}}1\label{equ. inverse lamda 2};\\
    (XY)_{+}\ot_{B^{op}}(XY)_{-}&=X_{+}Y_{+}\ot_{B^{op}}Y_{-}X_{-}\label{equ. inverse lamda 3};\\
    1_{+}\ot_{B^{op}}1_{-}&=1\ot_{B^{op}}1\label{equ. inverse lamda 4};\\
    \one{X_{+}}\ot_{B}\two{X_{+}}\ot_{B^{op}}X_{-}&=\one{X}\ot_{B}\two{X}{}_{+}\ot_{B^{op}}\two{X}{}_{-}\label{equ. inverse lamda 5};\\
    X_{+}\ot_{B^{op}}\one{X_{-}}\ot_{B}\two{X_{-}}&=X_{++}\ot_{B^{op}}X_{-}\ot_{B}X_{+-}\label{equ. inverse lamda 6};\\
    X&=X_{+}t_{L}(\varepsilon(X_{-}))\label{equ. inverse lamda 7};\\
    X_{+}X_{-}&=s_{L}(\varepsilon(X))\label{equ. inverse lamda 8}.
\end{align}
Since $\lambda(s_{L}(a)\ot_{B^{op}}1)=s_{L}(a)\ot_{B} 1$, we have
\begin{align*}
    &s_{L}(a)_{+}\ot_{B^{op}} s_{L}(a)_{-}=s_{L}(a)\ot_{B^{op}} 1\\
\end{align*}
As a result from (\ref{equ. inverse lamda 3}), we get
\begin{align}
    &(Xs_{L}(a))_{+}\ot_{B^{op}} (Xs_{L}(a))_{-}=X_{+}s_{L}(a)\ot_{B^{op}} X_{-}\label{equ. source and target map with lambda inv 1}\\
    &(s_{L}(a)X)_{+}\ot_{B^{op}} (s_{L}(a)X)_{-}=s_{L}(a)X_{+}\ot_{B^{op}} X_{-}\label{equ. source and target map with lambda inv 2}
\end{align}
Since $\lambda^{-1}(t_{L}(a)X\ot_{B}1)=\lambda^{-1}(X\ot_{B}s_{L}(a))=X_{+}\ot_{B^{op}}X_{-}s_{L}(a)$, we have
\begin{align}\label{equ. source and target map with lambda inv 3}
   (t_{L}(a)X)_{+}\ot_{B^{op}} (t_{L}(a)X)_{-}=X_{+}\ot_{B^{op}}X_{-}s_{L}(a).
\end{align}
Moreover, since $\lambda(X_{+}\ot_{B^{op}}s_{L}(a)X_{-})=\one{X_{+}}\ot_{B}\two{X_{+}}s_{L}(a)X_{-}=\one{X_{+}}t_{L}(a)\ot_{B}\two{X_{+}}X_{-}=Xt_{L}(a)\ot_{B}1$, we have
\begin{align}\label{equ. source and target map with lambda inv 4}
    (Xt_{L}(a))_{+}\ot_{B^{op}}(Xt_{L}(a))_{-}=X_{+}\ot_{B^{op}}s_{L}(a)X_{-}.
\end{align}
Finally, since $\lambda(t_{L}(a)X_{+}\ot_{B^{op}}X_{-})=\lambda(X_{+}\ot_{B^{op}}X_{-}t_{L}(a))=X\ot_{B}t_{L}(a)$, we have
\begin{align}\label{equ. source and target map with lambda inv 5}
    t_{L}(a)X_{+}\ot_{B^{op}}X_{-}=X_{+}\ot_{B^{op}}X_{-}t_{L}(a).
\end{align}

Similarly, for a anti-left Hopf algebroid, from \cite{BS} we have
\begin{align}
    \one{X_{[+]}}X_{[-]}\ot_{B}\two{X_{[+]}}&=1\ot_{B}X\label{equ. inverse mu 1};\\
    \two{X}{}_{[-]}\one{X}\ot^{B^{op}}\two{X}{}_{[+]}&=1\ot^{B^{op}}X\label{equ. inverse mu 2};\\
    (XY)_{[-]}\ot^{B^{op}}(XY)_{[+]}&=Y_{[-]}X_{[-]}\ot^{B^{op}}X_{[+]}Y_{[+]}\label{equ. inverse mu 3};\\
    1_{[-]}\ot^{B^{op}}1_{[+]}&=1\ot^{B^{op}}1\label{equ. inverse mu 4};\\
    X_{[-]}\ot^{B^{op}}\one{X_{[+]}}\ot_{B}\two{X_{[+]}}&=\one{X}{}_{[-]}\ot^{B^{op}}\one{X}{}_{[+]}\ot_{B}\two{X}\label{equ. inverse mu 5};\\
    (\one{X_{[-]}}\ot_{B}\two{X_{[-]}})\ot^{B^{op}}X_{[+]}&=(X_{[+][-]}\ot_{B}X_{[-]})\ot^{B^{op}}X_{[+][+]}\label{equ. inverse mu 6};\\
    X&=X_{[+]}s_{L}(\varepsilon(X_{[-]}))\label{equ. inverse mu 7};\\
    X_{[+]}X_{[-]}&=t_{L}(\varepsilon(X))\label{equ. inverse mu 8};\\
    (Xs_{L}(b))_{[-]}\ot^{B^{op}}(Xs_{L}(b))_{[+]}&=t_{L}(b)X_{[-]}\ot^{B^{op}} X_{[+]}\label{equ. inverse mu 9};\\
    (s_{L}(b)X)_{[-]}\ot^{B^{op}}(s_{L}(b)X)_{[+]}&=X_{[-]}t_{L}(b)\ot^{B^{op}} X_{[+]}\label{equ. inverse mu 10};\\
    (Xt_{L}(b))_{[-]}\ot^{B^{op}}(Xt_{L}(b))_{[+]}&=X_{[-]}\ot^{B^{op}} X_{[+]}t_{L}(b)\label{equ. inverse mu 11};\\
    (t_{L}(b)X)_{[-]}\ot^{B^{op}}(t_{L}(b)X)_{[+]}&=X_{[-]}\ot^{B^{op}} t_{L}(b)X_{[+]}\\
    X_{[-]}\ot^{B^{op}} s_{L}(b)X_{[+]}&=X_{[-]}s_{L}(b)\ot^{B^{op}} X_{[+]}\label{equ. inverse mu 12}.
\end{align}

Similarly, recall the shorthand for a (anti-)right Hopf algebroid $\cR$,
\begin{equation} X\m\ot_{A^{op}} X\p :=\hat{\lambda}^{-1}(1\ot_{A}X)\end{equation}
\begin{equation}X\np\ot^{A^{op}}X\nm:=\hat{\mu}^{-1}(X\ot_{A}1)\end{equation}

If $\cR$ is a right Hopf algebroid, we have
\begin{align}
    X\m X\p \ro \ot_{A}X\p \rt&=1\ot_{A} X \label{equ. inverse hlamda 1};\\
    X\ro X\rt \m\ot_{A^{op}}X\rt \p&=1\ot_{A^{op}}X\label{equ. inverse hlamda 2};\\
    (XY)\m\ot_{A^{op}}(XY)\p&=Y\m X\m\ot_{A^{op}}X\p Y\p\label{equ. inverse hlamda 3};\\
    1\m\ot_{A^{op}}1\p&=1\ot_{A^{op}}1\label{equ. inverse hlamda 4};\\
    X\m\ot_{A^{op}}X\p \ro\ot_{A}X\p \rt&=X\ro \m\ot_{A^{op}}X\ro \p\ot_{A}X\rt\label{equ. inverse hlamda 5};\\
    X\p \m\ot_{A}X\m\ot_{A^{op}}X\p \p&=X\m \ro\ot_{A}X\m \rt\ot_{A^{op}}X \p\label{equ. inverse hlamda 6};\\
    X&=t_{R}(\varepsilon(X\m))X\p \label{equ. inverse hlamda 7};\\
    X\m X\p&=s_{R}(\varepsilon(X))\label{equ. inverse hlamda 8};\\
    (Xs_{R}(a))\m\ot_{A^{op}} (Xs_{R}(a))\p&=X\m\ot_{B^{op}} X\p s_{R}(a)\label{equ. source and target map with hlambda inv 1};\\
    (s_{R}(a)X)\m\ot_{A^{op}} (s_{R}(a)X)\p&=X\m\ot_{B^{op}} s_{R}(a)X\p\label{equ. source and target map with hlambda inv 2};\\
   (t_{R}(a)X)\m\ot_{A^{op}} (t_{R}(a)X)\p&=X\m S_{R}(a)\ot_{A^{op}}X\p\label{equ. source and target map with hlambda inv 3};\\
    (Xt_{R}(a))\m\ot_{A^{op}} (Xt_{R}(a))\p&=s_{R}(a)X\m \ot_{A^{op}}X\p\label{equ. source and target map with hlambda inv 4};\\
    t_{R}(a)X\m\ot_{A^{op}}X\p&=X\m\ot_{A^{op}}X\p t_{R}(a)\label{equ. source and target map with hlambda inv 5}.
\end{align}

Similarly, for a anti-right Hopf algebroid,
\begin{align}
    X\np\ro\ot_{A}X\nm X\np\rt&=X\ot_{A}1\label{equ. inverse hmu 1};\\
    X\ro \np\ot^{A^{op}}X\rt X\ro \nm&=X\ot^{A^{op}}1\label{equ. inverse hmu 2};\\
    (XY)\np\ot^{A^{op}}(XY)\nm&=X\np Y\np\ot^{A^{op}}Y\nm X\nm\label{equ. inverse hmu 3};\\
    1\np\ot^{A^{op}}1\nm&=1\ot^{A^{op}}1\label{equ. inverse hmu 4};\\
    X\np \ro\ot_{A}X\np \rt\ot^{A^{op}} X\nm &=X\ro\ot_{A}X\rt \np\ot^{A^{op}} X\rt \nm\label{equ. inverse hmu 5};\\
    X\np\ot^{A^{op}}X\nm \ro\ot_{A} X\nm \rt&=X\np \np\ot^{A^{op}}X\nm\ot_{A}X\np \nm\label{equ. inverse hmu 6};\\
    X&=s_{R}(\varepsilon(X\nm))X\np\label{equ. inverse hmu 7};\\
    X\nm X\np&=t_{R}(\varepsilon(X))\label{equ. inverse hmu 8};\\
    (Xs_{R}(b))\np\ot^{A^{op}}(Xs_{R}(b))\nm&=X\np\ot^{A^{op}} t_{R}(b)X\nm\label{equ. inverse hmu 9};\\
    (s_{R}(b)X)\np\ot^{A^{op}}(s_{R}(b)X)\nm&=X\np\ot^{A^{op}} X\nm t_{R}(b)\label{equ. inverse hmu 10};\\
    (Xt_{R}(b))\np\ot^{A^{op}}(Xt_{R}(b))\nm&=X\np t_{R}(b)\ot^{A^{op}} X\nm\label{equ. inverse hmu 11};\\
    (t_{R}(b)X)\np\ot^{A^{op}}(t_{R}(b)X)\nm&=t_{R}(b)X\np\ot^{A^{op}} X\nm\\
    X\np\ot^{A^{op}} s_{R}(b)X\nm&= X\np  s_{R}(b)\ot^{A^{op}}X\nm\label{equ. inverse hmu 12},
\end{align}


\begin{thebibliography}{99}

\bibitem{RAA}
R. Alcal\'a,
{Oplax actions and enriched icons with applications
to coalgebroids and quantum categories}, PhD thesis, Macquarie Univ.
2017.


\bibitem{BK}
I. B\'alint, K. Szlach\'anyi,
{Finitary Galois extensions over noncommutative
bases}, Journal of Algebra 296:2 (2006) 520--560.




\bibitem{Boehm}
G. B\"ohm,
{Hopf algebroids}, in Handbook of Algebra, Vol. 6,
North-Holland, 2009, pp173--235.

\bibitem{BB}
G. B\"ohm, T. Brzezi\'nski,
{Cleft Extensions of Hopf Algebroids}, Applied Categorical Structures volume 14, pages 431--469 (2006).

\bibitem{BS}
G. B\"ohm, K. Szlach\'anyi,
{Hopf algebroids with bijective antipodes:
axioms, integrals, and duals}, J. Algebra, 274 (2004) 708--750.

\bibitem{BW}
T. Brzezi\'nski, R. Wisbauer,
{Corings and comodules},
London Mathematical Society Lecture Notes Vol 309, CUP 2003.


\bibitem{HG} X. Han and G. Landi, Gauge groups and bialgebroids, Letters in Mathematical Physics volume 111, Article number: 140 (2021).

\bibitem{HGY} X. Han, Giovanni Landi and Yang Liu, Hopf algebroids from noncommutative bundles, Reviews in Mathematical Physics Vol. 35 (2023). 2350027 (35 pages)


\bibitem{HM}
X. Han and S. Majid, Hopf Galois extension and Twisted Hopf algebroids,  J. Algebra 641 (2024) 754--794




\bibitem{KS}
A. Klimyk, K. Schm\"udgen,
{Quantum groups and their representations.}, Springer (1997).

\bibitem{NK}
N. Kowalzig,
Centres, trace functors, and cyclic cohomology,  Communications in Contemporary Mathematics 26, No. 03, 2250079 (2024).


\bibitem{Lu}
J-H. Lu,
Hopf Algebroids and quantum groupoids,  Internat. J. Math.  7 (1996)  47--70.



\bibitem{Ma:book}
S. Majid,
{Foundations of quantum group theory},
 CUP 1995 and 2000.


\bibitem{HHP}
H. Ph\`{u}ng,
{Tannaka-Krein duality for Hopf algebroids},
July 2002 Israel Journal of Mathematics 167(1), Pages 193--225.



\bibitem{RT}
D. Radford, J. Towber,
{Yetter--Drinfeld categories associated
to an arbitrary bialgebra}, Journal of Pure and Applied Algebra 87
(1993) 259--279

\bibitem{schau1}
P. Schauenburg,
{Duals and doubles of quantum groupoids ($\times_R$ -Hopf algebras)}, in New
Trends in Hopf Algebra Theory, AMS Contemp.
Math. 267 (2000) 273--299.

\bibitem{schau2}
P. Schauenburg,
{Hopf Modules and Yetter - Drinfeld Modules}, Journal of Algebra Volume 169, Issue 3, 1 November 1994, Pages 874--890.

\bibitem{schau2.5}
P. Schauenburg,
Bialgebras over noncommutative rings and a structure theorem for Hopf bimodules.
Appl. Categorical Struct. 6 (1998) 193--222.

\bibitem{schau3}
P. Schauenburg,
{The dual and the double of a Hopf algebroid are Hopf algebroids}, (Applied Categorical Structures 25 (2017), no. 1, 147--154).




\bibitem{Swe}M. Sweedler, {Hopf Algebras}, Benjamin, 1969.

\bibitem{SLW}
S. L. Woronowicz. Differential Calculus on Compact Matrix Pseudogroups (Quantum Groups).
Commun. Math. Phys, 122(1):125--170, 1989.




\bibitem{Xu}
P. Xu,
Quantum groupoids,  Commun. Math. Phys., 216 (2001) 539--581.

\end{thebibliography}
\end{document}